\documentclass[11pt]{article}
\usepackage[numbers,square]{natbib}
\usepackage{amsmath}
\usepackage{amsthm}
\usepackage{amsfonts}
\usepackage{amssymb}
\usepackage{siunitx}
\usepackage{commath}
\usepackage{graphicx}
\usepackage{xspace}

\usepackage{color}
\usepackage{bm}
\usepackage[lofdepth,lotdepth]{subfig}
\usepackage{stmaryrd}
\usepackage{url}
\usepackage{amsthm}
\usepackage{footnote}
\usepackage{multirow} 
\makesavenoteenv{tabular}
\makesavenoteenv{table}
\usepackage[hidelinks]{hyperref}
\hypersetup{colorlinks = true, urlcolor = blue,
  linkcolor = blue, citecolor = blue}
\usepackage{cleveref}
\usepackage{xcolor}

\usepackage[mathscr]{euscript}
\usepackage[shortlabels]{enumitem}

\usepackage[margin=0.7in]{geometry}
\makeatletter
\newcommand{\tnorm}{\@ifstar\@tnorms\@tnorm}
\newcommand{\@tnorms}[1]{%
  \left|\mkern-1.5mu\left|\mkern-1.5mu\left|
   #1
  \right|\mkern-1.5mu\right|\mkern-1.5mu\right|
}
\newcommand{\@tnorm}[2][]{%
  \mathopen{#1|\mkern-1.5mu#1|\mkern-1.5mu#1|}
  #2
  \mathclose{#1|\mkern-1.5mu#1|\mkern-1.5mu#1|}
}
\makeatother

\newtheorem{theorem}{Theorem}
\newtheorem{lemma}{Lemma}
\newtheorem{corollary}{Corollary}

\title{An unfitted HDG method for a distributed optimal convection-diffusion control problem}
\author{ Esteban Henr\'iquez\thanks{Department of Applied Mathematics,
    University of Waterloo, ON, Canada (\url{ehenriqu@uwaterloo.ca}),
    \url{https://orcid.org/0000-0002-0243-0368}}
  \and
  Manuel Solano\thanks{Departamento de Ingenier\'ia Matem\'atica, Facultad de Ciencias F\'isicas y Matem\'aticas, Universidad de Concepci\'on, Concepci\'on, Chile. (\url{msolano@ing-mat.udec.cl}),
    \url{https://orcid.org/0000-0001-8589-3685}}
  }
\begin{document}
\maketitle
\begin{abstract}
We analyze a high order unfitted hybridizable discontinuous Galerkin (HDG) method for an optimal control problem governed by a convection-diffusion equation posed in a domain with piecewise-wise $\mathcal{C}^2$ boundary $\partial \Omega$. The computational domain $\Omega_h$ does not necessarily fit $\Omega$ and the Transfer Path Method (TPM) is used to transfer the boundary data from  $\partial \Omega$ to $\partial \Omega_h$ through segments of direction $\boldsymbol{m}$. Under closeness conditions between $\partial \Omega_h$ and $\partial \Omega$ and on the transfer vector $\bm m$, we prove optimal order of convergence in the $L^2$-norm for all variables of the state and adjoint problems. We also show numerical examples to complement the theory.
\end{abstract}

{\bf Keywords:} convection-diffusion, unfitted methods, curved domains, 
discontinuous Galerkin, optimal control problems
\section{Introduction}\label{ch:introduction}

Numerous applications in engineering, physics, medicine and other fields are analyzed within the framework of optimal control of distributed parameter systems \cite{glowinski1995exact,lions1971optimal,troltzsch2024optimal,zuazua2007controllability}. In these systems, mathematical formulations often employ boundary controls or internally locally distributed controls, which are known as additive controls because of their appearance as additive terms in the model equations. On the other hand, many physical problems of interest are modelled by linear convection-diffusion partial differential equations, such as predicting the distribution of a substance in a continuous medium, for instance, pollutants in air or water. In this context, it is beneficial to manipulate the source terms (e.g., pollutant emission rates) to ensure that the solution of the PDE conforms to a desired distribution or keeps the pollutant concentration below a specified threshold. This problem can be approached using optimal control theory, where the source term acts as a control function, and the observation is a function of the solution of the PDE. We can find different numerical methods in the literature to solve these type of problems, such as finite differences \cite{mctaggart2004finite}, standard finite elements \cite{quarteroni2006numerical}, finite volumes \cite{bochev2015multiscale}, discontinuous Galerkin (DG) \cite{yucel2015adaptive,yucel2012distributed, zhou2014local}, and hybridizable discontinuous Galerkin (HDG) \cite{chen2018hdg, chen2019hdg, hu2018superconvergent}. The latter is the approach of this work.

Discontinuous Galerkin methods are known for their flexibility to handle complex meshes, their high accuracy, and their ability to deal with discontinuities in the solution. In addition, they allow efficient local refinement, conserve important physical quantities, and handle complex boundary conditions well. They are also noted for their ease of parallelization, which makes them suitable for simulations in clusters. The computational efficiency can be further enhanced by a hybridization process. The HDG method introduced by \cite{cockburn2009unified} considers auxiliary variables at the interfaces of the elements, which implies a reduction in the size of the global linear system by using a static condensation procedure. This type of HDG method has been applied to a variety of different problems \cite{cesmelioglu2013analysis, 
cesmelioglu2017analysis,cockburn2011analysis,fu2015analysis,rhebergen2013space,
sanchez2019hybridizable}.

Several existing numerical methods for solving PDEs have usually been developed assuming polygonal/polyhedric domains. However, in real scenarios, the PDEs are usually posed on curved domains. In this direction, two possible approaches can adopted: \textit{fitted} or \textit{unfitted} methods. The former, adjust the mesh to the real domain using an appropriate interpolation of the boundary \cite{lenoir1986optimal, li2010optimal}. However, these methods tend to be inadequate because of the high computational cost for certain problems, such as in the case of shape optimization or moving domains, where remeshing would be necessary. On the other hand, \textit{unfitted} schemes have the flexibility of working with non-body fitted grids, but it is not straightforward to obtain a high-order method since the true boundary data is posed ``away" from the computational boundary. This produces a geometric error that usually dominates the finite element error.

During the last decade, the community has been interested in the development of high-order \textit{unfitted} schemes. Consequently, several methods have been proposed: the Cut Finite Element Method \cite{burman2015cutfem, burman2022cutfem, frachon2019cut}, the Shifted Boundary Method \cite{atallah2022high, main2018shifted}, and the Transfer Path Method (TPM). The latter is the focus of our work. The TPM was originally developed in \cite{cockburn2012solving,paper2}, although the name {\it  Transfer Path Method} was first used later in \cite{SaSaSo2022b}. It has been applied to different problems, mostly on HDG discretization: Stokes flow \cite{solano2019high}, Oseen equations \cite{solano2022unfitted}, linear elasticity equations \cite{cardenas2024high}, Helmholtz equation \cite{camargo2021high}, convection diffusion equations \cite{cockburn2014solving}, Grad-Shafranov equations \cite{sanchez2020adaptive}, and recently to a distributed control problem \cite{henriquez2023control}, interface problems using non-matching grids \cite{bermudez2026hybridizable,MaNgSo2022}, and shape optimization \cite{henriquez2025unfitted}. In addition, the TPM has also been developed for conforming mixed finite element methods \cite{OyZuSo2019,OySoZu2020}.

This manuscript is closely related to our previous work in \cite{henriquez2023control} and some of the estimates that we will present are a generalization of what we obtained in that paper. However, we would like to highlight new aspects that the current manuscript will address. First, although the TPM was numerically developed for convection-diffusion in the context of HDG methods \cite{cockburn2014solving}, to date, a comprehensive theoretical convergence analysis is still lacking. In this work, we fill that gap and present a thorough convergence analysis of the convection-diffusion problem for that discretization and extend it to a distributed control problem. Secondly, in all the aforementioned works on the analysis of the TPM, the authors assumed, for simplicity, that the tangent vector $\bm m$ of the transfer path is parallel to the normal vector $\bm n$ of the boundary facets. They also claimed that all estimates hold if that assumption is not true, as long as $\bm m$ does not deviate too much from $\bm n$.
In this paper, we prove this claim and quantify how far $\bm m$ can be from $\bm n$ to guaranty convergence of the scheme.

We combine the work in \cite{chen2018hdg} and \cite{paper2}, in order to propose and analyze an HDG method for an optimal control convection-diffusion problem posed in a non-polygonal/polyhedral domain using the TPM. To state the problem, let $\Omega$ be a Lipschitz domain in $\mathbb{R}^n$, $n \in\{2,3\}$, with a piecewise $\mathcal{C}^2$ boundary denoted by $\Gamma := \partial\,\Omega$. Given a source term $f \in L^2(\Omega)$, a target function $y_d \in L^2(\Omega)$, and $g\in H^{1/2}(\Gamma)$, our objective is to minimize the functional
\begin{equation}
   J(u) := \frac{1}{2} \norm[0]{y - y_d}_{L^2(\Omega)}^2
    + \frac{\gamma}{2} \norm[0]{u}_{L^2(\Omega)}^2
\end{equation}
over  $u \in L^2(\Omega)$, subject to
\begin{subequations}\label{eq:state_eq}
\begin{alignat}{2}
    \label{eq:state_eq_a}
    - \Delta y + \bm\beta \cdot \nabla y 
    &  = f +u \qquad &&\text{in } \Omega, \\
    \label{eq:state_eq_b}
    y & = g \qquad && \text{on }\Gamma,
\end{alignat}
\end{subequations}
where  $\gamma\,>\,0$ is a given regularization parameter.  and we assume that the vector field $\bm\beta\in [W^{1,\infty}(\Omega)]^n$ satisfies $\nabla \cdot \bm\beta = 0$.

To characterize the variable $u$, we employ the adjoint state equation. Specifically, the state $y$ and control $u$ variables are solutions to the optimal control problem (\ref{eq:state_eq}) if and only if they satisfy the subsequent optimality system.

\begin{subequations}\label{eq:optimiality-system}
    \begin{minipage}{0.45\textwidth}
\begin{alignat}{2}
\label{eq:optimiality-system-a}
    - \Delta y + \bm\beta \cdot \nabla y 
    &  = f + u \qquad &&\text{in } \Omega, \\
    \label{eq:optimiality-system-b}
    y & = g \qquad && \text{on }\Gamma,
\end{alignat}
\end{minipage}
    \begin{minipage}{0.45\textwidth}
\begin{alignat}{2}
\label{eq:optimiality-system-c}
    - \Delta z -\bm\beta \cdot \nabla z 
    & = y_d - y\qquad && \text{in } \Omega,\\
    \label{eq:optimiality-system-d}
    z & = 0\qquad && \text{on }\Gamma,\\
    \label{eq:optimiality-system-e}
    z - \gamma u 
    & = 0 \qquad && \text{in }\Omega .
\end{alignat}
\end{minipage}
\end{subequations}

The reminder of the paper is organized as follows. In Section \ref{ch:preliminaries}, we state admissibility conditions on the family of computational domains that approximate the true domain, introduce the transferring path technique, and set some notation. Additionally, we provide assumptions on the distance between $\Gamma$ and the computational boundary $\Gamma_h$. The HDG scheme is presented in Section \ref{ch:HDG_formulation} and its well-posedness is proved. Subsequently, in Section \ref{ch:error_analyisis} we provide the \textit{a priori} error estimates of the method and present numerical experiments in Section \ref{ch:numerical_experiments}. We end with concluding remarks in Section \ref{ch:conclusions}.

\section{Preliminaries}\label{ch:preliminaries}

In this section we introduce the notation associated with the computational domain and the family of paths that will allow us to transfer the boundary data from $\Gamma$ to the computational boundary $\Gamma_h$. To that end, we will consider the setting specified in \cite{SaSaSo2022} and establish a set of assumptions under which our analysis is valid.

\paragraph{Admissible triangulations.}
\label{ch:computational_domain_and_transferring_paths}

Given a domain $\Omega$ and a discretization parameter $h>0$, we denote by $\Omega_h$ an open polygonal/polyhedral computational domain, with boundary $\Gamma_h$, triangulated by a simplicial mesh $\mathcal{T}_h$ of meshsize $h$. For a simplex $K$, we denote its outward unit normal vector by $\bm n_K$, writing $\bm n$ instead of $\bm n_K$ when there is no confusion. Similarly, for a facet $e$, we write $\bm n$ instead of $\bm n_e$ to refer to its normal vector. We also consider, by simplicity, that the triangulation does not have hanging nodes. The set of facets and boundary facets of $\mathcal{T}_h$ are denoted by $\mathscr{E}_h$ and $\mathscr{E}_h^{\partial}$, respectively. The set $\Omega_h^c := \Omega \backslash \overline{\Omega_h}$ refers to the non-meshed region. We say the family $\{(\Omega_h,\mathcal{T}_h)\}_{h>0}$ is {\it admissible} if each member $(\Omega_h,\mathcal{T}_h)$ satisfies the following conditions: 

\begin{enumerate} [(a)]
    \item $\Omega_h \subset \Omega$;
    \item $\mathcal{T}_h$ is uniformly shape-regular, that is, there exists 
    $\hat{\gamma} >0$, independent of $h$, such that $h_K\leq 
    \hat{\gamma}\rho_K$, where $\rho_K$ is the radius of the largest ball 
    contained in $K$ and $h_K<h$ is the diameter of $K$;
    \item there exists a bijective mapping $\phi: \Gamma_h \rightarrow \Gamma$;
    \item for every $K\in \mathcal{T}_h$ such that $K\cap\Gamma_h \neq \varnothing$, it holds that $\max\{{\rm dist}(\boldsymbol x,\overline{\boldsymbol x}): \boldsymbol x \in K\cap\Gamma_h \text{ and } \overline{\boldsymbol x} \in \Gamma\} = \mathcal O (h_K)$.
\end{enumerate}
Moreover,
\begin{enumerate} [(e)]
        \item for every $\epsilon>0$ there exists a pair $(\Omega_h,\mathcal T_h)$ such that $\lambda(\Omega \setminus \Omega_h)<\epsilon$, where $\lambda(\cdot)$ denotes the Lebesgue measure.
\end{enumerate}

Let us provide a concise explanation of these conditions. If condition (a) is not met, indicating that $\Omega_h$ is not fully contained within $\Omega$, the analysis presented in this manuscript can be adjusted to accommodate such scenarios, assuming that the underlying partial differential equation remains valid outside of $\Omega$. Condition (c) is required for the analysis presented in this paper, but for computational implementation, the mapping $\phi$ does not necessarily have to be bijective as long as (d) is satisfied. Condition (d) is imposed to ensure that the distance between $\Gamma$ and an element $K$ that intersects the boundary $\Gamma_h$  is at most of order $h_K$.

\paragraph{Spaces, mesh dependent inner-products and norms.} In this paper we 
will make use of the usual notation associated Sobolev spaces, i.e. given a domain $D$ in $\mathbb{R}^n$ and $s$ a non-zero real number, we denote $H^s(D)$ the usual Hilbert space equipped with the inner product $(\cdot,\cdot)_{s,D}$. The induced norm and semi-norm are denoted by $\|\cdot\|_{s,D}$ and  $|\cdot|_{s,D}$, respectively.  If $s=0$, we just write $(\cdot, \cdot)_D$ and $\|\cdot\|_{D}$ as usual. The spaces for vector-valued functions will be boldfaced, for instance,  $\boldsymbol{H}^{s}(D) := [H^{s}(D)]^{d}$. For a Lipschitz curve $S$  in $\mathbb{R}^n$, we employ the same notation, but the inner product is denoted $\langle\cdot,\cdot\rangle_S$. 

For a scalar-valued function $\eta$ and $\zeta$, we define
\begin{equation*}
    (\eta,\zeta)_{\mathcal{T}_h}
    := \sum_{K\in\mathcal{T}_h}(\eta,\zeta)_K\quad\text{ and } \quad
    \langle \eta, \zeta \rangle_{\partial\mathcal{T}_h}
    := \sum_{K\in\mathcal{T}_h} \langle \eta, \zeta \rangle_{\partial K}.
\end{equation*}
Vector-valued functions are boldfaced and for $\bm \eta$ and $\bm\zeta$, we write
\begin{equation*}
    (\bm\eta,\bm\zeta)_{\mathcal{T}_h} 
    \,:=\, \sum_{i=1}^d(\eta_i,\zeta_i)_{\mathcal{T}_h}\quad\text{ and }\quad\langle\bm\eta,\bm\zeta\rangle_{\partial\mathcal{T}_h}\,:=\,\sum_{i=1}^d\langle\eta_i,\zeta_i\rangle_{\partial\mathcal{T}_h}\,.
\end{equation*}
These inner products induce the norms
\begin{equation*}
    \norm[0]{\cdot}_{\mathcal{T}_h} 
    := \left(\sum_{K\in\mathcal{T}_h} \norm[0]{\cdot}_{K}^2\right)^{1/2},\quad
    \norm[0]{\cdot}_{\partial\mathcal{T}_h} 
    := \left(\sum_{K\in\mathcal{T}_h}\norm[0]{\cdot}_{\partial K}^2\right)^{1/2}, \quad\text{ and }\quad
    \norm[0]{\cdot}_{\Gamma_h} 
    := \left( \sum_{e \in \mathscr{E}_h^{\partial}} \norm[0]{\cdot}_{e}^2 \right)^{1/2}.
\end{equation*}
In addition, for $w > 0$, we write $\norm[0]{\eta}_{\partial\mathcal{T}_h,w} := \norm[0]{w^{1/2} \eta}_{\partial\mathcal{T}_h}$ and $\norm[0]{\eta}_{\Gamma_h,w} := \norm[0]{w^{1/2} \eta}_{\Gamma_h}$. Finally, to avoid proliferation of constants, we will write $a \lesssim b$ instead of $a \leq C b$, when $C$ is a constant independent of $h$.
 
\paragraph{Transferring paths and polynomial extrapolation.} Since the problem will be solved in $\Omega_h$, we must specify a suitable boundary data on the computational boundary $\Gamma_h$. To this end, we consider the idea proposed by \cite{paper2} and transfer the boundary data $g$ from $\Gamma$ to $\Gamma_h$ by a line integration of the gradient through transferring paths, which originates the name of the method. More precisely, let $e\in\mathscr{E}_h^{\partial}$. For each $\bm x \in e$, we define $l(\bm x) := |\phi(\bm x) - \bm x|$ and denote by $\boldsymbol{m}(\bm x)$ the unit tangent vector to the segment joining $\bm x$ and $\phi(\bm x)$. We notice that, for an admissible triangulation, $l(\bm x) \leq C h$ with $C>0$ independent of $h$. A line integration over this segment allows us to transfer the boundary data. In fact, if a given function $g_v$ is the trace of a function $v$ and $\bm q:=-\nabla v$, there holds
\begin{align}\label{def:transf}
    g_v(\bm x) 
    & = g_v(\bm \phi(\bm x)) 
    + \int_0^{l(\bm x)} (\bm{q} \cdot \bm{m})(\bm x(s)) \text{d}s ,
\end{align}
with $\bm x(s) = (\bm \phi({\bm x}) - \bm x) s / l(\bm x) + \bm x$, $s\in [0,l(\bm x)]$. Now, at the discrete level, a polynomial approximation $\bm q_h$ of $\bm q$ will be available only inside $\Omega_h$. Hence, we will extrapolate $\bm q_h$ to the segment $[\bm x ,\phi(\bm x)]$ in order to compute the above integral. This is why we define the {\it extrapolation patch} as
\begin{equation*}
    K_{ext}^{e} 
    := \left\{\bm x + s \bm m(\bm x) : 0 \leq s \leq l(\bm x), \bm x\in e\right\}.
\end{equation*}
Since $\bm \phi$ is a bijection, we observe that $\Omega_h^c = \cup_{e\in\mathscr{E}_h^\partial} K_{ext}^{e}$. We highlight that a bijection $\bm \phi$ can be constructed in several ways. For instance, a particular construction in two dimensions can be found in \cite{cockburn2012solving}. It is also possible to use the closest-point projection as long as it is unique. 

We denote by $H_e^{\perp}$ the largest distance of a point in $K_{ext}^e$ to 
the plane determined by $e$ and by $h_e^{\perp}$ the distance between $e$ and 
the vertex of $K^{e}$ opposite to $e$. We set $r_e = H_e^{\perp} 
/ h_e^{\perp}$ and define 
\begin{equation*}
\begin{array}{c}
    \displaystyle
    R := \max_{e\in\mathscr{E}_h^{\partial}}r_e.
\end{array}
\end{equation*}
Additionally, we assume a \textit{local proximity condition}, which states that $|\boldsymbol{x} - \bm \phi(\boldsymbol{x})| \leq C h^{ \delta}$. This condition characterizes the closeness between the computational and physical domains. This condition can be equivalently written as
\begin{equation}\label{eq:prox-cond}
    \max_{e \in \mathcal{E}_h^{\partial}} H_e^{\perp} \leq C h^{1+\delta}.
\end{equation}
On the other hand, given a polynomial $\bm q$ defined on a boundary element $K^e\in \mathcal{T}_h$ such that $e = \overline{K^e}\cap\Gamma_h$, $E(\bm{p})$ denotes its extrapolation to $K_{ext}^{e}$. In this direction, it is also useful to introduce the function
\begin{equation}\label{def:Lambda}
    \Lambda^{\bm{p}}(\bm x)
    := l^{-1}(\bm x) \int_{0}^{l(\bm x)}\left(\bm{p}(\bm x) - \bm E(\bm{p})(\bm x(s)) \right)\cdot\bm{m} (\bm x) \dif{s}
\end{equation}
and the norm
\begin{equation}\label{eq:trip-norm}
    \tnorm{\boldsymbol{p}}_e 
    := \left(\int_e \int_0^{l(\boldsymbol{x})} |\boldsymbol{p}(\boldsymbol{x} + t \boldsymbol{m}(\bm x(s)))|^2 \dif{t} \dif{s} \right)^{1/2},
\end{equation}
where $e \in \mathcal{E}_h^{\partial}$ and $\boldsymbol{p}$ is sufficiently smooth for the norm to be properly defined. 

Under certain assumptions on the transfer paths associated to the vertices of the boundary facet $e$, the norms $  \tnorm{\cdot}_e $ and $\|\cdot \|_{K_{\rm ext}^e}$ are equivalent. More precisely, let $\bm m(\bm v_i)$ the unit tangent vector of the transfer path associated with the vertices $\bm v_i$ ($i=1,2$) of the edge $e$. These norms are equivalent (see \cite[Lemma 3.4]{OyZuSo2019}) if
\begin{enumerate}
\item ${\bm m}(\bm v_{1})\cdot {\bm m}(\bm v_{2})\geq 0$,
\item there exists a constant $\beta_{e}$, independent of $h$, such that 
$\bm m (\bm x)\cdot {\bm n}_{e}\geq \beta_{e}>0$ for all ${\bm x} \in e$; and
\item  ${\bm m}({\bm v_{1}})\cdot \left({\bm m}({\bm v_{2}})\right)^{\perp}\geq 0$, with 
$\left(\bm m({\bm v_{2})}\right)^{\perp}$ being the vector obtained from 
$\bm m({\bm v_{2})}$ through a counterclockwise rotation by $\pi/2$ about the origin.
\end{enumerate}
In the three-dimensional case, these norms are also equivalent under similar assumptions involving the three vertices. We omit details and refer to \cite[Lemma A.1]{OySoZu2020}.

In this way, we introduce the following lemma as an extension of \cite[Lemma 5.2]{paper2} to help us to quantify the extrapolation error on a transferring segment. 

\begin{lemma}
\label{lem:Lambdas}
If the aforementioned over $\bm m(\bm v_i)$ hold, then
\begin{subequations}
    \label{ineq:Lambdas}
	\begin{align}
        \label{ineq:Lambdas-a}
		\norm[0]{l^{1/2} \Lambda^{\bm{p}} \rvert_{K_e} }_e 
        & \leq \frac{\beta_e^{-1/2}}{\sqrt{3}} r_e \norm[0]{h_e^\perp  \nabla \boldsymbol{p} }_{K_\mathrm{ext}^e} \qquad \forall \bm{p} \in [H^1(K^e_\mathrm{ext})]^d,
        \\
        \label{ineq:Lambdas-b}
		\norm[0]{ l^{1/2} \Lambda^{\bm{p}}\rvert_{K^e}  }_e 
        & \leq \frac{1}{\sqrt{3}} r^{3/2}_e C_{ext}^e C_{inv}^e \norm[0]{\bm{p}}_{K_e} \qquad \forall \bm{p} \in [\mathbb{P}_k(K^e)]^d,
	\end{align}
\end{subequations}
where 
 \begin{equation*}
	C^e_{ext} := \dfrac{1}{\sqrt{r_e}} \sup_{\boldsymbol{\chi}\in [\mathbb{P}_k(K^e)]^d} \dfrac{\|\boldsymbol{E}(\boldsymbol{\chi}) \|_{K_{\rm ext}^e}}{\|\boldsymbol{\chi} \|_{K^e}}.
\end{equation*}
\end{lemma}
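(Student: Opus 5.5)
Both estimates follow the argument of \cite[Lemma 5.2]{paper2}, with the normal direction replaced by $\bm m$. The factor $\beta_e^{-1/2}$ comes from the change of variables from the facet $e$ to the extrapolation patch $K_{ext}^e$.

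\textbf{Estimate (\ref{ineq:Lambdas-a}).} Fix $\bm x\in e$. By the fundamental theorem of calculus along the segment,
\[
\bm p(\bm x)-\bm E(\bm p)(\bm x(s))=-\int_0^s \nabla \bm p(\bm x(t))\,\bm m(\bm x)\,\dif t .
\]
Inserting this into (\ref{def:Lambda}) and applying Cauchy--Schwarz in $t$ and then in $s$ gives
\[
|\Lambda^{\bm p}(\bm x)|\le l^{-1}\int_0^{l} s^{1/2}\Big(\int_0^{l}|\nabla\bm p(\bm x(t))|^2\dif t\Big)^{1/2}\dif s .
\]
Squaring, multiplying by $l$, and using $\big(\int_0^l s^{1/2}\dif s\big)^2 = \tfrac{4}{9}l^3$, one arrives at
\[
l\,|\Lambda^{\bm p}(\bm x)|^2\lesssim \tfrac13\, l^2\int_0^{l}|\nabla\bm p(\bm x(t))|^2\dif t .
\]
The constant $1/3$ requires a sharper order of integration, namely $\int_0^l\!\int_0^s$ rather than Cauchy--Schwarz crudely. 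I expect this is where the $1/\sqrt3$ comes from, so I will integrate $\int_0^l\big(\int_0^s|\cdot|\big)^2\dif s$ carefully.

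Next I use $l(\bm x)\,\bm m\cdot\bm n_e\le H_e^\perp$, so that $l\le \beta_e^{-1}H_e^\perp=\beta_e^{-1}r_e h_e^\perp$. This gives one factor $\beta_e^{-1}r_e^2(h_e^\perp)^2$ after squaring. Then I integrate over $e$ and convert $\int_e\int_0^{l}\dif t\,\dif S$ into an integral over $K_{ext}^e$. The Jacobian of $(\bm x,t)\mapsto \bm x+t\bm m$ is $\bm m\cdot\bm n_e\ge\beta_e$, up to terms from the variation of $\bm m$ that the vertex conditions control, as in the norm-equivalence lemmas \cite[Lemma 3.4]{OyZuSo2019}. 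I will balance these factors: only one power of $\beta_e^{-1}$ should survive, obtained by bounding one $l$ by $H_e^\perp\beta_e^{-1}$ and handling the Jacobian with $\bm m\cdot\bm n$ exactly, i.e.\ $\dif V=(\bm m\cdot\bm n_e)\,\dif t\,\dif S$. After taking square roots this yields $\beta_e^{-1/2}r_e\,\|h_e^\perp\nabla\bm p\|_{K_{ext}^e}/\sqrt3$.

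\textbf{Estimate (\ref{ineq:Lambdas-b}).} For polynomials I apply (\ref{ineq:Lambdas-a}) to $\bm E(\bm p)$, which is smooth on $K_{ext}^e$. It then remains to bound $\|h_e^\perp\nabla\bm E(\bm p)\|_{K_{ext}^e}$. Since $\nabla\bm E(\bm p)=\bm E(\nabla\bm p)$, the definition of $C_{ext}^e$ gives
\[
\|\nabla\bm E(\bm p)\|_{K_{ext}^e}\le r_e^{1/2}C_{ext}^e\|\nabla\bm p\|_{K^e}.
\]
An inverse inequality $h_e^\perp\|\nabla\bm p\|_{K^e}\le C_{inv}^e\|\bm p\|_{K^e}$ finishes, with $C^e_{inv}$ defined accordingly. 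This gives the power $r_e\cdot r_e^{1/2}=r_e^{3/2}$. The statement of (\ref{ineq:Lambdas-b}) has no $\beta_e$ factor, so I expect it either gets absorbed into $C_{ext}^e$ or the argument here bypasses the Jacobian by working with the $\tnorm{\cdot}_e$-type integral directly.

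The main obstacle is the precise bookkeeping of the change of variables with a non-normal, spatially varying $\bm m$. I will rely on the vertex conditions to guarantee injectivity and the Jacobian lower bound $\beta_e$.
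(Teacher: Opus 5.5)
Your route matches the paper's. You write $l\Lambda^{\bm p}$ as an integral of $w(\bm x,t)=\partial_t\big(\bm p(\bm x+t\bm m)\cdot\bm m\big)$ along the path and apply Cauchy--Schwarz. You then pass from $\int_e\int_0^l$ to $K_{ext}^e$ via the norm equivalence of \cite[Lemma 3.4]{OyZuSo2019} and use $|w|\le|\nabla\bm p|$. For \eqref{ineq:Lambdas-b} you apply $C^e_{ext}$ to $\nabla\bm p$ and finish with an inverse inequality.

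One correction on the constant $1/3$. It does not come from estimating $\int_0^l(\int_0^s|\cdot|)^2\,\mathrm{d}s$, which will not produce $1/3$. It comes from exchanging the order of integration (the paper's integration by parts): $l\Lambda^{\bm p}(\bm x)=-\int_0^l(l-t)\,w(\bm x,t)\,\mathrm{d}t$. Cauchy--Schwarz with $\int_0^l(l-t)^2\,\mathrm{d}t=l^3/3$ then gives $l|\Lambda^{\bm p}|^2\le\frac{l^2}{3}\int_0^l w^2\,\mathrm{d}t$.

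\textbf{The $\beta_e$ bookkeeping is a genuine gap.} You assert this step rather than derive it, and it does not close.
\begin{itemize}
\item Your bound $l\le\beta_e^{-1}H_e^\perp$ is the correct one for oblique paths. But squaring it gives $\beta_e^{-2}$, not $\beta_e^{-1}$.
\item The Jacobian $\mathrm{d}V=(\bm m\cdot\bm n_e)\,\mathrm{d}t\,\mathrm{d}S$ costs a further $\beta_e^{-1}$.
\item There is nothing to balance. The honest outcome is $\|l^{1/2}\Lambda^{\bm p}\|_e\le\frac{\beta_e^{-3/2}}{\sqrt3}r_e\|h_e^\perp\nabla\bm p\|_{K_{ext}^e}$.
\end{itemize}
The paper reaches $\beta_e^{-1/2}$ only because its appendix bounds $l(\bm x)\le H_e^\perp=r_eh_e^\perp$ directly. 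That bound holds for $\bm m=\bm n$ (the setting of \cite{paper2}) but not in general. With it, the norm equivalence supplies the only factor $\beta_e^{-1/2}$.

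For truly oblique paths the stated constant is in fact false. Take $\bm m$ constant with $\bm m\cdot\bm n_e=\beta_e$, $l\equiv L$, and $\bm p(\bm y)=(\bm y\cdot\bm m)\bm m$; the vertex conditions hold trivially.
\begin{itemize}
\item Then $\Lambda^{\bm p}\equiv-L/2$, so $\|l^{1/2}\Lambda^{\bm p}\|_e^2=|e|L^3/4$.
\item Also $H_e^\perp=\beta_eL$, $|\nabla\bm p|=1$ and $|K_{ext}^e|=\beta_e|e|L$. So the square of the right-hand side of \eqref{ineq:Lambdas-a} equals $\beta_e^2|e|L^3/3$.
\item Hence \eqref{ineq:Lambdas-a} fails whenever $\beta_e<\sqrt3/2$.
\end{itemize}
You must either add the hypothesis $l\le r_eh_e^\perp$ or prove the estimate with $\beta_e^{-3/2}$. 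Since $\beta_e$ is independent of $h$, this does not affect convergence rates, but it changes the thresholds in (A.1) and (A.6).

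For \eqref{ineq:Lambdas-b}, your suspicion is right: the $\beta_e$ factor cannot be absorbed or bypassed. $C_{ext}^e$ is defined through $\|\cdot\|_{K_{ext}^e}$, so converting the path integral into that norm costs the same Jacobian. Your route therefore yields \eqref{ineq:Lambdas-b} with the same $\beta_e$ power as \eqref{ineq:Lambdas-a}. The paper's proof of \eqref{ineq:Lambdas-b} drops this factor silently. Yet its own (A.1), with $\beta_e^{-2}$, is what Corollary~\ref{cor:energy_arg_ineq} produces if \eqref{ineq:Lambdas-b} keeps a factor $\beta_e^{-1/2}$.
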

\begin{proof}
    See appendix \ref{sec:proof-lambdas}.
\end{proof}
The constants $C^e_{ext}$ and $C^e_{inv} $ are indeed independent of 
$h$, but they depend on the shape regularity constant and on the 
polynomial degree $k$ \cite{paper2}.

\paragraph{Closeness assumptions.}
We state a set of assumption quantifying how close $\Gamma$ and $\Gamma_h$ 
must be in order to ensure well-posedness and optimal convergence of the 
method. For every facet $e$ of $\mathscr{E}^{\partial}_h$, we suppose that

\begin{minipage}{0.45\textwidth}
\begin{enumerate}[({A}.1),series=mylist]
    \item $\beta_e^{-2} r_e^3 (C^e_{ext})^{2} (C^e_{inv})^{2} 
    \leq 1/32$, \label{Assumption:1}
    \item $r_e h_e^{\perp} (\tau_1 - \dfrac{1}{2} \bm\beta \cdot \bm n) 
    \leq 1/4 $,\label{Assumption:2}
    \item $r_e h_e^{\perp} (\tau_2 + \dfrac{1}{2} \bm\beta \cdot \bm n)
    \leq 1/4$,\label{Assumption:2'}
\end{enumerate}
\end{minipage}
\begin{minipage}{0.45\textwidth}
\begin{enumerate}[resume*=mylist] 
    \item $\displaystyle 2 \max_{x\in e} l(\bm x) (\tau_1 - \dfrac{1}{2} \bm\beta\cdot\bm n) 
    \leq 1/4.$\label{Assumption:4}
    \item $\displaystyle 2 \max_{x\in e} l(\bm x) (\tau_2 + \dfrac{1}{2} \bm\beta \cdot \bm n) 
    \leq 1/4,$\label{Assumption:4'}
\end{enumerate}
\end{minipage} 

\noindent where $\tau_1$ and $\tau_2$ are the stabilization parameters of the HDG method and $\beta_e$ is a constant independent of the discretization parameter $h$. Let us comment on the feasibility of the above conditions. If $\Omega$ is polygonal/polyhedral, then $r_e\,=\,0$ and all the assumptions hold true.  If ${\rm dist}(\Gamma_h,\Gamma)$ is of order $h^{\delta+1}$, they are also true for all $\delta>0$. This is the case, for example, when $\Gamma_h$ is a piece-wise linear interpolation of $\Gamma$ since $\delta = 1$. On the other hand, the TPM can handle more general scenarios where $\Omega$ is embedded in a background triangulation and $\Omega_h$ is constructed by the union of the elements of the triangulation lying completely inside of $\Omega$, which means $1 > \delta > 0$. We observe that\ref{Assumption:2} - \ref{Assumption:4'} are still satisfied for $h$ sufficiently small, but \ref{Assumption:1} cannot be guaranteed in general. However, numerical experiments suggest that the method is still optimal even for the case $\delta=0$. In addition, we decompose  $\bm m =(\bm m \cdot \bm n) \bm n+ \bm t$ with $\bm t:=\bm m-(\bm m \cdot \bm n) \bm n$ and we ask the tangential component $\bm t$ to satisfy
\begin{enumerate}[resume*=mylist] 
    \item $r_e \norm[0]{\bm t}_{L^{\infty}(\Gamma_h)}^2
    \leq \dfrac{1}{98} \beta_e C_{tr}^{-2} ,$\label{Assumption:t}
\end{enumerate}
where $C_{tr} > 0$ is the constant obtained from the discrete trace inequality (see \cite[Lemma 1.46]{di2011mathematical}). This condition states that, from the theoretical point of view,  the transfer path must be constructed in such a way that $\bm m$ does not deviate too much from the normal to the facet.

\paragraph{$L^2$ and HDG projection.} Let us first recall the classical $L^2$ projection into $M_h$ denoted by $P_M$. On each $K \in\mathcal{T}_h$ it satisfies (cf. \cite[Lemma 1.58 and Lemma 1.59]{di2011mathematical}), for $v \in H^{l+1}(K)$, 
\begin{subequations}
\begin{align}
    |v - P_M v|_{m,K}
    & \lesssim 
    h^{l+1-m} |v|_{l+1,K} \quad \forall m \in \{0,\ldots,k\},\\
    \label{eq:proj-L2-faces-estimate}
    \norm[0]{v - P_M v}_{\partial K}
    & \lesssim 
    h^{l+1/2} |v|_{l+1,K}.
\end{align}
\end{subequations}
We also consider the HDG projection introduced in \cite{chen2012analysis}, 
which is a projection into the product space $\bm{V}_h\times W_h$. Given 
$(\bm{q}, y)\in \bm{V}_h\times W_h$, it is defined by
$\Pi_h(\bm{q},y) := (\bm\Pi_V\bm{q},\Pi_W y)$,
where $(\bm\Pi_{\bm{V}}\bm{q},\Pi_W y)$ is the only element satisfying, for 
all $K\in\mathcal{T}_h$,
\begin{subequations}
\label{eq:proj_hdg}
\begin{align}
    \label{eq:proj_hdg_a}
    (\bm\Pi_{\bm{V}}\bm{q} + \bm\beta \Pi_W y, \bm{s})_{K} 
    & = (\bm{q} + \bm\beta y, \bm{s})_{K} &&\forall \bm{s}\in \left[\mathbb{P}_{k-1}(K)\right]^{d},\\
    \label{eq:proj_hdg_b}
    (\Pi_W y,t)_{K}
    & = (y,t)_{K} &&\forall t \in \mathbb{P}_{k-1}(K),\\
    \label{eq:proj_hdg_c}
    \langle \bm\Pi_{\bm{V}} \bm{q} \cdot \bm{n} + \bm\beta \cdot \bm n P_My + \tau_1 \Pi_W y, \mu\rangle_e 
    & = \langle \bm{q} \cdot \bm{n} + \bm\beta \cdot \bm n y + \tau_1  y, \mu \rangle_{e} && \forall \mu \in \mathbb{P}_{k}(e), \quad \forall  e \subseteq \partial K,
\end{align}
\end{subequations}
where $\tau_1$ is the positive stabilization parameter of the HDG scheme \eqref{equations:hdg} defined in $\partial \mathcal{T}_h$. If $\tau_1$ is chosen independent of $h$ and such that $\big(\tau_1 - \frac{1}{2} \bm\beta(\bm x) \cdot \bm n(\bm x)\big)>0$, for all $\bm x\in\partial K$, then this projection is well-defined 
\cite{chen2012analysis}. Moreover, if $\bm q \in {\bm H}^{l+1}(K)$ and 
$y \in H^{l+1}(K)$, with $l\in [0,k]$,
\begin{subequations}
 \label{eq:hdg_proj_err}
\begin{align}
    \label{eq:hdg_proj_err_a}
    \norm[0]{\bm \Pi_{\bm V} \bm q - \bm q}_{K}
    & \lesssim
    h_K^{l+1} |\bm q|_{l+1,K} + h_{K}^{l+1} |y|_{l+1,K},\\
    \label{eq:hdg_proj_err_b}
    \norm[0]{\Pi_{W}y\,-\,y}_{K}
    & \lesssim 
    h^{l+1}_K |\bm q|_{l+1,K} + h_K^{l+1} |y|_{l+1,K}.
\end{align}
\end{subequations}
In turn, for the convection-diffusion optimal control problem, we will use 
the projection operator associated to the dual problem, introduced in 
\cite{chen2018hdg}. It is a projection into the product space $\bm V_h \times 
W_h$ such that, given $(\bm p, z)\in \bm V_h\times W_h$, it is defined by 
$\widetilde \Pi_h(\bm p, z) := (\widetilde{\bm\Pi}_{\bm V} \bm 
p, \widetilde\Pi_W z)$, where, on each element $K \in \mathcal T_h$, 
\begin{subequations}
\label{eq:proj_hdg_d}
\begin{align}
    \label{eq:proj_hdg_d_a}
    (\widetilde{\bm\Pi}_{\bm{V}}\bm{p}\,-\,\bm\beta\,\widetilde\Pi_W z,\bm{s})_{K} 
    & = (\bm{p} - \bm\beta z, \bm{s})_{K} && \forall \bm{s} \in \left[\mathbb{P}_{k-1}(K)\right]^{d},\\
    \label{eq:proj_d_hdg_b}
    (\widetilde\Pi_W z,t)_{K}
    & = (z,t)_{K} &&\forall t \in \mathbb{P}_{k-1}(K),\\
    \label{eq:proj_hdg_d_c}
    \langle \widetilde{\bm\Pi}_{\bm{V}} \bm{p} \cdot \bm{n} - \bm\beta \cdot \bm n P_Mz + \tau_2 \widetilde\Pi_W z, \mu \rangle_e 
    & = \langle \bm{p} \cdot \bm{n} - \bm\beta \cdot \bm n z + \tau_2 z, \mu \rangle_{e} && \forall \mu \in \mathbb{P}_{k}(e), \quad \forall e \subseteq \partial K,
\end{align}
\end{subequations}
where $\tau_2$ is the positive stabilization parameter of the HDG scheme \eqref{equations:hdg} defined in $\partial \mathcal{T}_h$, chosen to be independent of $h$ and such that $\big(\tau_2 + \frac{1}{2} \bm\beta(\bm x) \cdot \bm n(\bm x) \big)>0$, for all $\bm x\in\partial K$. This projection is of the same nature as $\Pi_h$. Therefore, it is well-defined and, if $\bm p \in \bm H^{l+1}(K)$ and $z\in H^{l+1}(K)$, with $l\in[0,k]$, then
\begin{subequations}
\begin{align}
    \label{eq:hdg_proj_err_d_a}
    \norm[0]{\widetilde{\bm \Pi}_{\bm V} \bm p - \bm p}_{K}
    & \lesssim
    h_K^{l+1} |\bm p|_{l+1,K} + h_{K}^{l+1} |z|_{l+1,K},\\
    \label{eq:hdg_proj_err_d_b}
    \norm[0]{\widetilde\Pi_{W}z - z}_{K}
    & \lesssim
    h^{l+1}_K |\bm p|_{l+1,K} + h_K^{l+1} |z|_{l+1,K}.
\end{align}
\end{subequations}
In the computational boundary $\Gamma_h$ and in the extrapolation region $\Omega_h^c = \cup_{e\in\mathscr{E}_h^\partial} K_{ext}^{e}$, the HDG projection satisfies (cf.  \cite[Lemma 3.8]{paper2}),
\begin{subequations} \label{eq:Ipn_Irn}
\begin{align}
    \label{eq:Ipn_Irn_a}
    \norm[0]{\bm \Pi_{\bm V}\bm q - \bm q}_{\Gamma_h,h^{\perp}}
    & \lesssim
    h^{k+1} |\bm q|_{l+1,\Omega}
    + h^{k+1} |v|_{l+1,\Omega},\\
    \label{lemma:partial_Ip_Ir}
    \norm[0]{\partial_{\bm n}((\bm \Pi_{\bm V}\bm q - \bm q) \cdot \bm n)}_{\Omega_h^c,(h^{\perp})^2}
    & \lesssim
    R^{1/2} \norm[0]{\bm \Pi_{\bm V} \bm q - \bm q}_{\mathcal{T}_h} + \left(1 + R^{1/2}\right) h^{k+1} |\bm q|_{l+1, \Omega}.
\end{align}
\end{subequations}
Furthermore, $\widetilde{\Pi}_h$ also satisfies analogous estimates.


\section{HDG formulation}\label{ch:HDG_formulation}

In order to present the HDG scheme, we state the strong mixed formulation of the equation posed in the computational subdomain $\Omega_h$,  which is given by

\begin{subequations}
    \begin{minipage}{0.45\textwidth}
\begin{alignat}{3}
    \label{eq:pde_a}
    \bm{q} + \nabla y & = 0 \qquad  &\text{in }\Omega_h,\\
    \label{eq:pde_b}
    \nabla \cdot(\bm{q} + \bm\beta y) & = f + \alpha z\qquad&\text{in }\Omega_h,\\
    \label{eq:pde_c}
    y & = \varphi_1 \qquad &\text{on }\Gamma_h,
\end{alignat}
\end{minipage}
\qquad 
\begin{minipage}{0.45\textwidth}
    \begin{alignat}{3}
    \label{eq:pde_d}
    \bm{p} + \nabla z & = 0 \qquad& \text{in } \Omega_h,\\
    \label{eq:pde_e}
    \nabla\cdot(\bm{p} - \bm\beta z) & =  y_d - y\qquad& \text{in }\Omega_h,\\
    \label{eq:pde_f}
    z & = \varphi_2 \qquad& \text{on } \Gamma_h,
\end{alignat}
\end{minipage}
\end{subequations} \\

\noindent where $\varphi_1$ and $\varphi_2$ can be obtained by integrating \eqref{eq:pde_a} and \eqref{eq:pde_d} along the transferring paths and $\alpha := \gamma^{-1}$. More precisely, for any $\bm x\in e$, $e\in\mathcal{E}_h^\partial$ and $\overline{\bm x}\in \partial\Omega$, we deduce that (cf. \eqref{def:transf})
\begin{align}
    \varphi_{1}(\bm x) 
    & = g(\overline{\bm x}) + \int_0^{l(\bm x)} \bm{q} \cdot \bm{m}(\bm x(s)) \text{d}s \quad {\rm and} \quad 
    \varphi_{2}(\bm x) 
    = \int_0^{l(\bm x)}\bm{p} \cdot \bm{m}(\bm x(s)) \text{d}s,
    \label{def:phi}
\end{align}
where $\bm x(s) = (\overline{\bm x} - \bm x) s /l(\bf x) + \bm x$, $s \in [0,l(\bf x)]$. The HDG method seeks an approximation $(\bm q_h, y_h, \widehat{y}_h, \bm p_h, z_h, \widehat{z}_h)$ of the exact solution $(\bm q, y, y\big|_{\mathscr{E}_{h}}, \bm p, z, z\big|_{\mathscr{E}_{h}})$ in the space $\bm{V}_h \times W_h \times M_h \times \bm{V}_h \times W_h \times M_h$ defined by
\begin{subequations}
\label{eq:discrete_spaces}
\begin{align}
    \bm{V}_h 
    &:= \left\{ \bm{v}\in [L^2(\mathcal{T}_h)]^d : v\big|_K \in[\mathbb{P}_{k}(K)]^d, \quad \forall K \in \mathcal{T}_h \right\},\\
    W_h 
    & := \left\{ w \in L^{2}(\mathcal{T}_h) : w \big|_K\in\mathbb{P}_{k}(K), \quad \forall K \in \mathcal{T}_h \right\},\\
    M_h 
    & := \left\{ \mu \in L^2(\mathscr{E}_h) : \mu \big|_e \in \mathbb{P}_{k}(e), \quad \forall e \in \mathscr{E}_{h}\right\}.
\end{align}
\end{subequations}
such that
\begin{subequations}\label{equations:hdg}
\begin{align}
    \label{eq:hdg_a}
    (\bm{q}_h, \bm{r}_1)_{\mathcal{T}_h}
    - (y_h, \nabla \cdot \bm{r}_1)_{\mathcal{T}_h}
    + \langle \widehat{y}_h, \bm{r}_1 \cdot \bm{n} \rangle_{\partial\mathcal{T}_h} & = 0,\\
    \label{eq:hdg_b}
    - (\bm{q}_{h} + \bm\beta y_h, \nabla w_1)_{\mathcal{T}_h} + \langle\widehat{\bm{q}}_h \cdot \bm{n} + \bm\beta \cdot \bm n \widehat y_h, w_1\rangle_{\partial\mathcal{T}_h}
    - (\alpha z_h, w_1)_{\mathcal{T}_h}
    & = (f, w_1)_{\mathcal{T}_h},\\
    \label{eq:hdg_c}
    (\bm{p}_h,\bm{r}_2)_{\mathcal{T}_h}
    - (z_h, \nabla \cdot \bm{r}_2)_{\mathcal{T}_h}
    + \langle \widehat{z}_h, \bm{r}_2 \cdot \bm{n} \rangle _{\partial\mathcal{T}_h}
    & = 0,\\
    \label{eq:hdg_d}
    - (\bm{p}_{h} - \bm\beta z_h, \nabla w_2)_{\mathcal{T}_h} + \langle \widehat{\bm{p}}_h \cdot \bm{n} - \bm\beta \cdot \bm n \widehat z_h, w_2 \rangle_{\partial\mathcal{T}_h}
    + (y_h, w_2)_{\mathcal{T}_h}
    & = (y_d,w_2)_{\mathcal{T}_h}\,,\\
    \label{eq:hdg_e}
    \langle \widehat{\bm{q}}_h \cdot \bm{n} + \bm\beta \cdot \bm n \widehat y_h, \mu_1 \rangle_{\partial \mathcal{T}_h \backslash \Gamma_h}
    & 
    = 0,\\
    \label{eq:hdg_f}
    \langle \widehat{\bm{p}}_h \cdot \bm{n} - \bm\beta \cdot \bm n \widehat z_h, \mu_2 \rangle_{\partial\mathcal{T}_h \backslash \Gamma_h}
    & 
    = 0,\\
    \label{eq:hdg_i}
    \langle \widehat{y}_h, \mu_1 \rangle_{\Gamma_h} 
    & =
    \langle \varphi_1^{h}, \mu_1 \rangle_{\Gamma_h},\\
    \label{eq:hdg_j}
    \langle \widehat{z}_h, \mu_2 \rangle_{\Gamma_h} 
    & =
    \langle \varphi_2^{h}, \mu_2 \rangle_{\Gamma_h},
\end{align}
for all $(\bm{v}_1, w_1, \mu_1, \bm{v}_2, w_2, \mu_2) \in \bm{V}_h \times W_h \times M_h \times \bm{V}_h \times W_h \times M_h$. The numerical traces are defined as follows
\begin{align}\label{eq:hdg_gh}
    \bm{\widehat{q}}_h 
    = \bm{q}_h + \tau_1 (y_h - \widehat{y}_h) \bm{n} \qquad \text{and} \qquad \bm{\widehat{p}}_h 
    = \bm{p}_h + \tau_2 (z_h - \widehat{z}_h) \bm{n},
\end{align}
where $\tau_1$ and $\tau_2$ are stabilization parameters. Now, we observe that the true boundary data on $\Gamma_h$ (cf. \eqref{def:phi}) depend on the unknowns $\bm{p}$ and $\bm{q}$. Then, we approximate $ \varphi_{1}$ and $\varphi_{2}$ by $\varphi_{1}^h$ and $\varphi_{2}^h$ as follows. Let $\bm x \in \mathcal{E}_h^\partial$ and set
\begin{align}\label{def:varphi12}
    \varphi_{1}^h(\bm x) 
    := g(\overline{\bm x}) + \int_0^{l(x)} \bm E(\bm{q}_h) \cdot \bm{m}(\bm x(s)) \dif{s}
    \qquad \text{and} \qquad
    \varphi_{2}^h(\bm x)
    := \int_0^{l(x)} \bm E(\bm{p}_h) \cdot \bm{m}(\bm x(s)) \dif{s},
\end{align}
\end{subequations}
where we recall the $E(\bm{q}_h)$ and $E(\bm{p}_h)$ are the local 
extrapolation defined in the previous section. For our theoretical results, we require the assumptions used in \cite[Section 3]{chen2018hdg} for the stabilization parameters:

\begin{minipage}{0.45\textwidth}
\begin{enumerate}[({B}.1),series=mylist]
    \item $\tau_1$ is a piecewise constant on $\partial \mathcal T_h$, \label{Assumption:B-1}
    \item $\tau_1 = \tau_2 + \bm\beta \cdot \bm n$, \label{Assumption:B-2}
\end{enumerate}
\end{minipage}
\begin{minipage}{0.45\textwidth}
\begin{enumerate}[resume*=mylist] 
    \item $\min(\tau_1 - \dfrac{1}{2} \bm\beta \cdot \bm n)|_{\partial K}
    > 0 \quad \forall K \in \mathcal{T}_h$, \label{Assumption:B-4}
\end{enumerate}
\end{minipage}\\

Note that if \ref{Assumption:B-2} and \ref{Assumption:B-4} hold, then
\begin{equation}\label{eq:B-4'}
    \min(\tau_2 + \frac{1}{2} \bm\beta \cdot \bm n)|_{\partial K}
    > 0 \quad \forall K \in \mathcal{T}_h.    
\end{equation}
To analyze the well-posedness of this HDG formulation, we introduce the following notation:
\begin{subequations}
\begin{align}
\begin{split}
    E := \gamma \norm[0]{ \bm{q}_h}_{\mathcal{T}_h}^2
    + \norm[0]{\bm{p}_h}_{\mathcal{T}_h}^{2}
    + \gamma \norm[0]{(\tau_1 - \dfrac{1}{2} \bm\beta \cdot \bm n)^{1/2} (y_h -\widehat{y}_h)}_{\partial\mathcal{T}_h}^{2}
    + \norm[0]{(\tau_2 + \dfrac{1}{2} \bm\beta \cdot \bm n)^{1/2} (z_h - \widehat{z}_h)}_{\partial\mathcal{T}_h}^2
    \end{split}\label{def:E}
\end{align}
\begin{equation}
    E_\varphi := \gamma \norm[0]{l^{-1/2} \varphi_1^h}_{\Gamma_h}^2
    + \norm[0]{l^{-1/2} \varphi_2^h}_{\Gamma_h}^2. \label{def:Evarphi}
\end{equation}
\end{subequations}
In other words, $E$ quantifies the energy of the HDG solution in the computational domain $\Omega_h$, whereas $E_\varphi$ is related to the approximation of the boundary data. From now on, assumptions (A) and (B) will be assumed to be valid.

\begin{lemma}\label{lemma:EU}
  The HDG scheme \eqref{equations:hdg} is well-posed.  
\end{lemma}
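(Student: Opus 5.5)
Since \eqref{equations:hdg} is a square linear system in the finite-dimensional space $\bm{V}_h \times W_h \times M_h \times \bm{V}_h \times W_h \times M_h$, it suffices to prove uniqueness. We therefore set $f=0$, $y_d=0$, $g=0$ and show that the only solution is the trivial one. The plan is an energy argument. We test \eqref{eq:hdg_a}--\eqref{eq:hdg_b} with $(\bm r_1,w_1)=(\gamma\bm q_h,\gamma y_h)$ and \eqref{eq:hdg_c}--\eqref{eq:hdg_d} with $(\bm r_2,w_2)=(\bm p_h, z_h)$. We also test \eqref{eq:hdg_e}--\eqref{eq:hdg_f} with $\mu_1=\gamma\widehat y_h$ and $\mu_2=\widehat z_h$ on the interior facets, and add everything. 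We then integrate by parts elementwise and use $\nabla\cdot\bm\beta=0$ to write $(\bm\beta y_h,\nabla y_h)_{\mathcal T_h}=\frac12\langle \bm\beta\cdot\bm n\, y_h,y_h\rangle_{\partial\mathcal T_h}$, and insert the numerical traces \eqref{eq:hdg_gh}. The coupling terms cancel: $-\gamma(\alpha z_h,y_h)+(y_h,z_h)=0$ because $\alpha=\gamma^{-1}$. The computation should give the identity
\begin{equation*}
E = -\gamma\langle \widehat{\bm q}_h\cdot\bm n + (\tau_1-\tfrac12\bm\beta\cdot\bm n)\ldots\rangle_{\Gamma_h}\ \text{type terms},
\end{equation*}
that is, $E$ in \eqref{def:E} is bounded by boundary terms on $\Gamma_h$ of the form $\gamma\langle \varphi_1^h, \bm q_h\cdot\bm n + (\tau_1-\tfrac12\bm\beta\cdot\bm n)(y_h-\widehat y_h)\ldots\rangle_{\Gamma_h}$ plus $\tfrac12\langle\bm\beta\cdot\bm n\,\varphi_1^h,\varphi_1^h\rangle_{\Gamma_h}$. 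The analogous terms appear for $z$ with $\tau_2+\frac12\bm\beta\cdot\bm n$. Here we use $\widehat y_h=\varphi_1^h$ and $\widehat z_h=\varphi_2^h$ on $\Gamma_h$, which hold because $\varphi_i^h|_e\in\mathbb P_k(e)$ after projection; this is how \eqref{eq:hdg_i}--\eqref{eq:hdg_j} are read.

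Next we rewrite the boundary data. With $g=0$, the definition \eqref{def:varphi12} together with \eqref{def:Lambda} gives $\varphi_1^h = l\,(\bm q_h\cdot\bm m - \Lambda^{\bm q_h})$, and similarly $\varphi_2^h = l\,(\bm p_h\cdot\bm m-\Lambda^{\bm p_h})$. Decomposing $\bm m=(\bm m\cdot\bm n)\bm n+\bm t$, we get $\bm q_h\cdot\bm n=\bigl(l^{-1}\varphi_1^h+\Lambda^{\bm q_h}-\bm q_h\cdot\bm t\bigr)/(\bm m\cdot\bm n)$. Substituting this, the boundary term $-\gamma\langle\varphi_1^h,\bm q_h\cdot\bm n\rangle_{\Gamma_h}$ produces a negative definite piece $-\gamma\|(\bm m\cdot\bm n)^{-1/2}l^{-1/2}\varphi_1^h\|^2_{\Gamma_h}$, which controls $E_\varphi$ in \eqref{def:Evarphi} since $\bm m\cdot\bm n\le1$. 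The remaining cross terms are bounded by Cauchy--Schwarz and Young's inequality as follows:
\begin{itemize}
\item The $\Lambda$ terms are handled by Lemma~\ref{lem:Lambdas}, \eqref{ineq:Lambdas-b}, giving a multiple $\beta_e^{-1}r_e^3(C^e_{ext}C^e_{inv})^2\|\bm q_h\|^2_{K^e}$, which \ref{Assumption:1} makes at most $\frac1{32}\|\bm q_h\|^2$.
\item The tangential terms $\langle l^{1/2}\bm q_h\cdot\bm t,\ldots\rangle$ are handled by $l\le H_e^\perp=r_eh_e^\perp$ and the discrete trace inequality $\|\bm q_h\|_e\le C_{tr}(h_e^{\perp})^{-1/2}\|\bm q_h\|_{K^e}$. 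This gives $r_e\|\bm t\|^2_{L^\infty}C_{tr}^2\beta_e^{-1}\|\bm q_h\|^2$, which is small by \ref{Assumption:t}.
\item The stabilization cross terms $\langle\varphi_1^h,(\tau_1-\frac12\bm\beta\cdot\bm n)(y_h-\widehat y_h)\rangle$ and $\frac12\langle\bm\beta\cdot\bm n\,\varphi_1^h,\varphi_1^h\rangle$ are absorbed using $l\le \max l$ and \ref{Assumption:4}, \ref{Assumption:2}; for $z$ we use \ref{Assumption:2'} and \ref{Assumption:4'}.
\end{itemize}
The outcome should be $E+E_\varphi\le \frac12(E+E_\varphi)$, hence $E=E_\varphi=0$.

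From $E=0$ we get $\bm q_h=\bm p_h=0$, $y_h=\widehat y_h$ on $\partial\mathcal T_h$, and $z_h=\widehat z_h$ on $\partial\mathcal T_h$ (using \ref{Assumption:B-4} and \eqref{eq:B-4'}). From $E_\varphi=0$ we get $\widehat y_h=\widehat z_h=0$ on $\Gamma_h$. Equation \eqref{eq:hdg_a} then reduces to $(\nabla y_h,\bm r_1)_{\mathcal T_h}=0$, so $y_h$ is piecewise constant. Since it matches the single-valued $\widehat y_h$ across facets and vanishes on $\Gamma_h$, $y_h=0$, hence $\widehat y_h=0$; the same argument gives $z_h=\widehat z_h=0$.

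The main obstacle is the bookkeeping in the second step. The Young's-inequality weights must be chosen so that each cross term consumes at most a fixed fraction ($\frac14$ or $\frac18$) of the corresponding coercive term, and this is exactly what fixes the constants $1/32$, $1/4$ and $1/98$ in (A). The delicate point is that dividing by $\bm m\cdot\bm n\ge\beta_e$ introduces the factors $\beta_e^{-1}$ that appear in \ref{Assumption:1} and \ref{Assumption:t}. I would first carry out the estimate with $\bm t=0$, following \cite{paper2}, and then add the tangential perturbation.
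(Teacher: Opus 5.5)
Your argument is correct in outline, but it takes a genuinely different route from the paper.

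\textbf{How the paper argues.} The paper does not close the energy estimate by itself. In $\mathbb{K}_y$ it regroups $\tau_1(y_h-\widehat y_h)+\tfrac12\bm\beta\cdot\bm n\,\widehat y_h$ as $(\tau_1-\tfrac12\bm\beta\cdot\bm n)(y_h-\widehat y_h)+\tfrac12\bm\beta\cdot\bm n\,y_h$. The resulting term $\tfrac{\gamma}{2}\langle\bm\beta\cdot\bm n\,y_h,\varphi_1^h\rangle_{\Gamma_h}$ can then only be bounded through a discrete trace inequality. This leaves $E+E_\varphi\le C(1+\|\bm\beta\|_{L^\infty(\Omega)}R)(\|y_h\|^2_{\mathcal T_h}+\|z_h\|^2_{\mathcal T_h})$. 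The paper removes the right-hand side for $h<h_0$ by rerunning the duality argument of Theorem~\ref{lemma:EyEz_estimates}.

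\textbf{How your route differs.} Your grouping keeps only $\widehat y_h$ and $y_h-\widehat y_h$ on $\Gamma_h$. Every cross term is then controlled by $E$ and $E_\varphi$, and no $\|y_h\|_{\mathcal T_h}$ appears. You therefore need neither the dual problem nor the elliptic regularity \eqref{eq:eliptic_regularity}. The paper's route mainly buys economy, since it recycles Corollary~\ref{cor:energy_arg_ineq} and Theorem~\ref{lemma:EyEz_estimates}. In the error analysis the coupling terms $(I_z,\varepsilon_y)$ force duality anyway, which explains the paper's choice. Do not slip into the paper's regrouping, or your estimate will not close.

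\textbf{Two details to correct.}

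First, the purely convective terms are not absorbed by (A.2)--(A.5), which only involve $\tau_1-\tfrac12\bm\beta\cdot\bm n$.
\begin{itemize}
\item The term $\tfrac{\gamma}{2}\langle\bm\beta\cdot\bm n\,\widehat y_h,\widehat y_h\rangle_{\Gamma_h}$ is bounded by $\tfrac12\|\bm\beta\|_{L^\infty}\max l\,E_\varphi$.
\item The term hidden in your dots is $\tfrac{\gamma}{2}\langle\bm\beta\cdot\bm n\,(y_h-\widehat y_h),\widehat y_h\rangle_{\Gamma_h}$. It comes from splitting $\tau_1=(\tau_1-\tfrac12\bm\beta\cdot\bm n)+\tfrac12\bm\beta\cdot\bm n$. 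It is $\lesssim\|\bm\beta\|_{L^\infty}C_\tau(\max l)^{1/2}E^{1/2}E_\varphi^{1/2}$, where $C_\tau$ bounds $(\tau_1-\tfrac12\bm\beta\cdot\bm n)^{-1/2}$.
\end{itemize}
Both are absorbed only because $\max l\lesssim h$ is small. So you also need $h<h_0$, but now with an explicit threshold.

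Second, \eqref{eq:hdg_i} gives $\widehat y_h=P_M\varphi_1^h$, not $\widehat y_h=\varphi_1^h$. Since $l$ and $\bm m$ vary with $\bm x$, $\varphi_1^h$ is not a polynomial on $e$. This is harmless:
\begin{itemize}
\item $\bm q_h\cdot\bm n$ and $\tau_1(y_h-\widehat y_h)$ lie in $\mathbb P_k(e)$, so you may replace $\widehat y_h$ by $\varphi_1^h$ in those pairings.
\item In the remaining $\bm\beta\cdot\bm n$ term, use $\|\widehat y_h\|_e\le\|\varphi_1^h\|_e\le(\max l)^{1/2}\|l^{-1/2}\varphi_1^h\|_e$.
\item At the end, $E_\varphi=0$ still gives $\widehat y_h=P_M\varphi_1^h=0$ on $\Gamma_h$.
\end{itemize}
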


We will come back to the proof after Section \ref{ch:error_analyisis}.

\section{Error analysis}\label{ch:error_analyisis}

As usual in the error analysis of these types of methods \cite{cockburn2010projection}, we first decompose the error as the sum of the projection of the error and the projection error. The latter will be controlled by the properties in Section \ref{ch:preliminaries}, while for the former we will employ an energy argument (Section \ref{ch:energy_argument}) for the mixed variables and a duality argument (Section \ref{ch:duality_argument}) for the primal variables. 

\subsection{Energy argument}\label{ch:energy_argument}

We introduce the following notation for the error, the projection of the error, and the projection error, respectively:
\begin{subequations}
\begin{alignat}{10}
     & \bm e^{\bm q} := \bm{q} - \bm{q}_h \quad&, \quad  
     & e^y := y - y_h \quad&, \quad 
     & e^{\widehat{y}} := y - \widehat{y}_h&, \\
     & \bm\varepsilon_{\bm q} = \bm\Pi_{\bm{V}}\bm{q} -\bm{q}_h \quad&, \quad & \varepsilon_y := \Pi_W y - y_h \quad&,\quad &\varepsilon_{\widehat{y}} := P_M y - \widehat{y}_h&, \\
     & I_{\bm q} = \bm{q} - \bm\Pi_{\bm{V}}\bm{q} \quad&, \quad 
     & I_y := y - \Pi_W y \quad&. \quad 
     &&
\end{alignat}
\end{subequations}\label{def:errors}
In the same way, but with the projector $(\widetilde{\bm\Pi}_{\bm V} 
\bm p, \widetilde\Pi_W z)$, we define $\bm e^{\bm p}$, $e^z$, 
$e^{\widehat{z}}$, $\bm\varepsilon_{\bm p}$, $\varepsilon_z$, 
$\varepsilon_{\widehat{z}}$, $ I_{\bm p}$ and $I_z$. To simplify the notation, we define
\begin{subequations}\label{def:mathcal_Epy_Epz_mathcal_Evarphi1,Evarphi2}
\begin{equation}\label{def:mathcal_Epy_Epz}
    \mathcal{E}_{\bm q, y} 
    := \gamma \norm[0]{\bm\varepsilon_{\bm q}}_{\mathcal{T}_h}^2
    + \gamma \norm[0]{(\tau_1 - \dfrac{1}{2} \bm\beta \cdot \bm n)^{1/2}(\varepsilon_y - \varepsilon_{\widehat{y}})}_{\partial \mathcal{T}_h}^{2}, 
    \mathcal{E}_{\bm r, z} := \norm[0]{\bm\varepsilon_{\bm p}}_{\mathcal{T}_h}^{2}
    + \norm[0]{(\tau_2 + \dfrac{1}{2} \bm\beta \cdot \bm n)^{1/2}(\varepsilon_z - \varepsilon_{\widehat{z}})}_{\partial \mathcal{T}_h}^2,
\end{equation}
\begin{equation}
    \mathcal{E}_{\varphi_1} 
    := \gamma \norm[0]{\varphi_1 - \varphi_1^h}_{\Gamma_h,l^{-1}}^2, \quad\quad
    \mathcal{E}_{\varphi_2} 
    := \norm[0]{\varphi_2 -\varphi_2^h}_{\Gamma_h, l^{-1}}^2. \label{def:mathcal_Evarphi1,Evarphi2} 
\end{equation} 
\end{subequations}
We also set $\mathcal{E} := \mathcal{E}_{\bm q, y} + \mathcal{E}_{\bm p, z}$ and $\mathcal{E}_{\varphi} := \mathcal{E}_{\varphi_1} + \mathcal{E}_{\varphi_2}$. The latter allows us to quantify the error in the approximation the boundary data. 

We begin by noting that the projection of the errors satisfied the same HDG 
system \eqref{equations:hdg} but with a different right-hand side. In fact, 
it can be deduced that (see for instance 
\cite{chen2018hdg,henriquez2023control,ZHU20162}):
\begin{subequations}
\label{eq:energy_arg}
\begin{align}
    \label{eq:energy_arg_a}
    ( \bm\varepsilon_{\bm q}, \bm{r}_1)_{\mathcal{T}_h}
    - (\varepsilon_y, \nabla \cdot \bm{r}_1)_{\mathcal{T}_h}
    + \langle \varepsilon_{\widehat{y}}, \bm{r}_1 \cdot \bm{n}\rangle_{\partial \mathcal{T}_h}
    & = - (I_{\bm{q}}, \bm{r}_1)_{\mathcal{T}_h},\\[1ex]
    \label{eq:energy_arg_b}
    - (\bm\varepsilon_{\bm q} + \bm\beta \varepsilon_y, \nabla w_1)_{\mathcal{T}_h}
    + \langle \bm\varepsilon_{\widehat{\bm q}} \cdot \bm{n} + \bm\beta \cdot \bm n \varepsilon_{\widehat y}, w_1\rangle_{\partial \mathcal{T}_h}
    - (\alpha \varepsilon_z, w_1)_{\mathcal{T}_h}
    & = (\alpha I_z, w_1)_{\mathcal{T}_h},\\[1ex]
    \label{eq:energy_arg_c}
    (\bm\varepsilon_{\bm p},\bm{r}_2)_{\mathcal{T}_h}
    - (\varepsilon_z, \nabla \cdot \bm{r}_2)_{\mathcal{T}_h}
    + \langle \varepsilon_{\widehat{z}}, \bm{r}_2 \cdot \bm{n}\rangle_{\partial \mathcal{T}_h}
    &= - (I_{\bm{p}}, \bm{r}_2)_{\mathcal{T}_h},\\[1ex]
    \label{eq:energy_arg_d}
    - (\bm\varepsilon_{\bm p} - \bm\beta \varepsilon_z, \nabla w_2)_{\mathcal{T}_h}
    + \langle \bm\varepsilon_{\widehat{\bm p}} \cdot \bm{n} 
    -\bm \beta \cdot \bm n \varepsilon_{\widehat z}, w_2 \rangle_{\partial \mathcal{T}_h}
    + (\varepsilon_y, w_2)_{\mathcal{T}_h}
    &= -( I_y, w_2)_{\mathcal{T}_h},\\[1ex]
    \label{eq:energy_arg_e}
    \langle \bm \varepsilon_{\widehat{\bm q}} \cdot \bm{n} 
    + \bm \beta \cdot \bm n \varepsilon_{\widehat y}, \mu_1 \rangle_{\partial \mathcal{T}_h \backslash \Gamma_h} 
    &= 0,\\[1ex]
    \label{eq:energy_arg_f}
    \langle \bm \varepsilon_{\widehat{\bm p}} \cdot \bm{n} 
    - \bm \beta \cdot \bm n \varepsilon_{\widehat z}, \mu_2 \rangle_{\partial \mathcal{T}_h \backslash \Gamma_h} 
    &= 0,\\[1ex]
    \label{eq:energy_arg_g}
    \langle \varepsilon_{\widehat{y}}, \mu_1 \rangle_{\Gamma_h} 
    &= \langle \varphi_1 - \varphi_1^h, \mu_1 \rangle_{\Gamma_h},\\[1ex]
    \label{eq:energy_arg_h}
    \langle \varepsilon_{\widehat{z}}, \mu_2 \rangle_{\Gamma_h} 
    &= \langle \varphi_2 - \varphi_2^h, \mu_2 \rangle_{\Gamma_h}
\end{align}
\end{subequations}
$\forall (\bm{r}_1, w_1, \mu_1, \bm{r}_2, w_2, \mu_2) \in \bm{V}_h \times 
W_h \times M_h \times \bm{V}_h \times W_h\times M_h$, with
\begin{align}
    \label{eq:energy_arg_i}
    \bm \varepsilon_{\widehat{\bm q}} \cdot \bm{n} 
    &= \bm \varepsilon_{\bm q} \cdot \bm{n} 
    + \tau_1 (\varepsilon_{y} 
    - \varepsilon_{\widehat{y}}),\\[1ex]
    \label{eq:energy_arg_j}
    \bm \varepsilon_{\widehat{\bm p}} \cdot \bm{n} 
    &= \bm \varepsilon_{\bm p} \cdot \bm{n} 
    + \tau_2 (\varepsilon_{z} 
    - \varepsilon_{\widehat{z}}).
\end{align}
Following exactly the same steps performed in the proof of Lemma \ref{lemma:EU} apply now to the system \eqref{eq:energy_arg}, we find the following identities.

\begin{lemma}\label{lemma:energy_identity} 
It holds
\begin{equation}\label{lemma_eq:energy_identity}
\begin{array}{l}                  
    \mathcal{E}
    = - \gamma (I_{\bm{q}}, \bm \varepsilon_{\bm q})_{\mathcal{T}_h}
    + (I_{z}, \varepsilon_{y})_{\mathcal{T}_h}
    - (I_{\bm{p}}, \bm \varepsilon_{\bm p})_{\mathcal{T}_h}
    -(I_{y}, \varepsilon_z)_{\mathcal{T}_h} 
    + \mathbb{K}_y 
    + \mathbb{K}_z,
\end{array}
\end{equation}
where
\begin{align*}
    \mathbb{K}_y
    & := - \gamma \left\langle \bm \varepsilon_{\bm q} \cdot \bm n 
    + (\tau_1 - \dfrac{1}{2} \bm\beta \cdot \bm n)^{1/2} (\varepsilon_y 
    - \varepsilon_{\widehat y}) 
    + \dfrac{1}{2} \bm\beta \cdot \bm n \varepsilon_y, \varphi_1 - \varphi_1^h \right\rangle_{\Gamma_h},\\
    \mathbb{K}_z
    & := - \left \langle \bm\varepsilon_{\bm p} \cdot \bm n 
    + (\tau_2 + \dfrac{1}{2} \bm\beta \cdot \bm n)^{1/2} (\varepsilon_z 
    - \varepsilon_{\widehat z}) 
    - \dfrac{1}{2} \bm\beta \cdot \bm n \varepsilon_z, \varphi_2 
    - \varphi_2^h \right \rangle_{\Gamma_h}.
\end{align*}
Moreover, on $\Gamma_h$ we have that
\begin{subequations}
\begin{align}
    \label{eq:identity_ehq_ehp_a}
    \bm \varepsilon_{\bm q} \cdot \bm{n}
    & = (\boldsymbol{m} \cdot \boldsymbol{n} l)^{-1} (\varphi_1 - \varphi_1^h)
    + (\bm m \cdot \bm n)^{-1} \big(\Lambda^{I_{\bm q}}
    + \Lambda^{\bm\varepsilon_{\bm q}}
    - I_{\bm{q}} \cdot \bm{m} 
    - \bm \varepsilon_{\boldsymbol{q}} \cdot \bm t\big),\\
    \label{eq:identity_ehq_ehp_b}
    \bm \varepsilon_{\bm p} \cdot \bm{n}
    & = (\boldsymbol{m} \cdot \boldsymbol{n} l)^{-1} (\varphi_2 - \varphi_2^h)
    + (\bm m \cdot \bm n)^{-1} \big(\Lambda^{I_{\bm p}}
    + \Lambda^{\bm\varepsilon_{\bm p}}
    - I_{\bm{p}} \cdot \bm{m} 
    -\bm \varepsilon_{\boldsymbol{p}} \cdot \bm t\big),
\end{align}
\end{subequations}
where we recall the definition of $\Lambda^{\bm p}$ in \eqref{def:Lambda}.
\end{lemma}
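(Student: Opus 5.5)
The energy identity is obtained by testing the error system \eqref{eq:energy_arg} with the projected errors themselves, weighted so that the coupling terms cancel. Concretely, take $\bm r_1=\gamma\bm\varepsilon_{\bm q}$ in \eqref{eq:energy_arg_a}, $w_1=\gamma\varepsilon_y$ in \eqref{eq:energy_arg_b}, $\mu_1=-\gamma(\bm\varepsilon_{\widehat{\bm q}}\cdot\bm n+\bm\beta\cdot\bm n\,\varepsilon_{\widehat y})$ in \eqref{eq:energy_arg_e}, and analogously $\bm r_2=\bm\varepsilon_{\bm p}$, $w_2=\varepsilon_z$ and $\mu_2$ in the adjoint equations. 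Add the first two equations and integrate $(\varepsilon_y,\nabla\cdot\bm\varepsilon_{\bm q})$ by parts elementwise, so that the volume terms $(\bm\varepsilon_{\bm q},\nabla\varepsilon_y)$ cancel. For the convective part, use $\nabla\cdot\bm\beta=0$ to write $(\bm\beta\varepsilon_y,\nabla\varepsilon_y)_K=\tfrac12\langle\bm\beta\cdot\bm n\,\varepsilon_y,\varepsilon_y\rangle_{\partial K}$. Substitute the numerical flux \eqref{eq:energy_arg_i}; collecting boundary terms, the element-boundary contributions assemble into $\gamma\|(\tau_1-\tfrac12\bm\beta\cdot\bm n)^{1/2}(\varepsilon_y-\varepsilon_{\widehat y})\|^2_{\partial\mathcal T_h}$ plus the quantity $-\gamma\langle\bm\varepsilon_{\widehat{\bm q}}\cdot\bm n+\bm\beta\cdot\bm n\varepsilon_{\widehat y}-\tfrac12\bm\beta\cdot\bm n\,\varepsilon_{\widehat y},\varepsilon_{\widehat y}\rangle_{\partial\mathcal T_h}$. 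This is the same algebra as in the well-posedness argument (Lemma~\ref{lemma:EU}) and \cite{chen2018hdg}. The flux term vanishes on interior facets by \eqref{eq:energy_arg_e}, and the $\tfrac12\bm\beta\cdot\bm n\,\varepsilon_{\widehat y}^2$ term cancels across interior facets because $\varepsilon_{\widehat y}$ is single valued. Hence only $\Gamma_h$ contributions survive, and there \eqref{eq:energy_arg_g} replaces $\varepsilon_{\widehat y}$ by $\varphi_1-\varphi_1^h$ (the test function lies in $M_h$). Rewriting the flux via \eqref{eq:energy_arg_i} gives the boundary term $\mathbb K_y$. The adjoint block is handled the same way, with $-\bm\beta$ and $\tau_2$, giving $\mathcal E_{\bm p,z}$ and $\mathbb K_z$.

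The coupling terms $-\gamma(\alpha\varepsilon_z,\varepsilon_y)$ and $(\varepsilon_y,\varepsilon_z)$ cancel since $\gamma\alpha=1$. The right-hand sides give $-\gamma(I_{\bm q},\bm\varepsilon_{\bm q})$, $(I_z,\varepsilon_y)$ (again using $\gamma\alpha=1$), $-(I_{\bm p},\bm\varepsilon_{\bm p})$ and $-(I_y,\varepsilon_z)$, which is exactly \eqref{lemma_eq:energy_identity}.

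\textbf{Main obstacle.} The delicate step is the exact form of $\mathbb K_y$ on $\Gamma_h$. The boundary leftover is $-\gamma\langle\bm\varepsilon_{\bm q}\cdot\bm n+\tau_1(\varepsilon_y-\varepsilon_{\widehat y})+\tfrac12\bm\beta\cdot\bm n\,\varepsilon_{\widehat y},\varphi_1-\varphi_1^h\rangle_{\Gamma_h}$. Regrouping $\tau_1(\varepsilon_y-\varepsilon_{\widehat y})+\tfrac12\bm\beta\cdot\bm n\,\varepsilon_{\widehat y}$ as $(\tau_1-\tfrac12\bm\beta\cdot\bm n)(\varepsilon_y-\varepsilon_{\widehat y})+\tfrac12\bm\beta\cdot\bm n\,\varepsilon_y$ produces the stated form. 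I expect the coefficient of $(\varepsilon_y-\varepsilon_{\widehat y})$ to be $(\tau_1-\tfrac12\bm\beta\cdot\bm n)$, so the exponent $1/2$ in $\mathbb K_y$ looks like a typographical point I would check carefully.

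For \eqref{eq:identity_ehq_ehp_a}, subtract \eqref{def:varphi12} from \eqref{def:phi}:
\[
\varphi_1-\varphi_1^h=\int_0^{l}(\bm q-\bm E(\bm q_h))\cdot\bm m\,ds .
\]
Split $\bm q-\bm E(\bm q_h)=I_{\bm q}+\bm E(\bm\varepsilon_{\bm q})$, using that $\bm\Pi_{\bm V}\bm q$ and its polynomial extrapolation coincide, with $I_{\bm q}$ understood as $\bm q-\bm E(\bm\Pi_{\bm V}\bm q)$ in $\Omega_h^c$. By \eqref{def:Lambda},
\[
l^{-1}\int_0^l\bm E(\bm p)\cdot\bm m\,ds=\bm p\cdot\bm m-\Lambda^{\bm p},
\]
so
\[
l^{-1}(\varphi_1-\varphi_1^h)=I_{\bm q}\cdot\bm m-\Lambda^{I_{\bm q}}+\bm\varepsilon_{\bm q}\cdot\bm m-\Lambda^{\bm\varepsilon_{\bm q}} .
\]
Finally decompose $\bm\varepsilon_{\bm q}\cdot\bm m=(\bm m\cdot\bm n)\bm\varepsilon_{\bm q}\cdot\bm n+\bm\varepsilon_{\bm q}\cdot\bm t$ and divide by $\bm m\cdot\bm n\ge\beta_e>0$. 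The identity for $\bm\varepsilon_{\bm p}\cdot\bm n$ follows identically.
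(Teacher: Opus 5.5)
Your proposal is the paper's proof with the details filled in. It uses the same test functions (you scale by $\gamma$ at the outset; the paper multiplies the first block by $\gamma$ afterwards) and the same identity $(\bm\beta\varepsilon_y,\nabla\varepsilon_y)_{\mathcal{T}_h}=\tfrac12\langle\bm\beta\cdot\bm n\,\varepsilon_y,\varepsilon_y\rangle_{\partial\mathcal{T}_h}$. It reduces to $\Gamma_h$ the same way, via \eqref{eq:energy_arg_e}, single-valuedness of $\varepsilon_{\widehat y}$ and \eqref{eq:energy_arg_g}, and uses the same $\Lambda$/tangential splitting for \eqref{eq:identity_ehq_ehp_a}. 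Your suspicion about the exponent is correct: the coefficient in $\mathbb{K}_y$ must be $(\tau_1-\tfrac12\bm\beta\cdot\bm n)$, and likewise $(\tau_2+\tfrac12\bm\beta\cdot\bm n)$ in $\mathbb{K}_z$. This is the form the paper itself uses in the proof of Corollary~\ref{cor:energy_arg_ineq}.

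Two fine points. First, the multiplier in \eqref{eq:energy_arg_e} should be $\mu_1=\varepsilon_{\widehat y}$, not the flux, which is not in $M_h$. Testing with $\varepsilon_{\widehat y}$ is in fact what you use when you say the flux term vanishes on interior facets.

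Second, your parenthetical ``the test function lies in $M_h$'' holds only if $\bm\beta\cdot\bm n$ is constant on each facet of $\Gamma_h$. Otherwise $\bm\beta\cdot\bm n\,\varepsilon_{\widehat y}$ is not facewise in $\mathbb{P}_k$; in the adjoint block the same goes for $\tau_2(\varepsilon_z-\varepsilon_{\widehat z})$ with $\tau_2=\tau_1-\bm\beta\cdot\bm n$. In that case \eqref{eq:energy_arg_g} only yields $\varepsilon_{\widehat y}=P_M(\varphi_1-\varphi_1^h)$ on $\Gamma_h$. A leftover such as $-\tfrac{\gamma}{2}\langle\bm\beta\cdot\bm n\,\varepsilon_{\widehat y},\varepsilon_{\widehat y}-(\varphi_1-\varphi_1^h)\rangle_{\Gamma_h}$ then remains on the right, with analogous terms for the adjoint.

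The paper's proof makes exactly the same replacement, so this is not a departure from it. The leftovers are also harmless: the displayed one is $\lesssim h\,|\bm\beta|_{W^{1,\infty}(\Omega)}\norm[0]{\varphi_1-\varphi_1^h}_{\Gamma_h}^2$ and is absorbed in Corollary~\ref{cor:energy_arg_ineq}.
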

\begin{proof}
    Choosing $\bm r_1 = \bm \varepsilon_{\bm q}$, $w_1 = \varepsilon_y$,
    $\bm r_2 = \bm \varepsilon_{\bm p}$, and $w_2 = \varepsilon_z$, in \eqref{eq:energy_arg_a}-\eqref{eq:energy_arg_d}, respectively. 
    Next, applying integration by parts \eqref{eq:energy_arg_b} and \eqref{eq:energy_arg_d}, we get
    \begin{align*}
        \norm[0]{\bm \varepsilon_{\bm q}}_{\mathcal{T}_h}
        - (\varepsilon_y, \nabla\cdot \bm \varepsilon_{\bm q})_{\mathcal{T}_h}
        + \langle\varepsilon_{\widehat{y}}, \bm \varepsilon_{\bm q} \cdot \bm n\rangle_{\partial \mathcal{T}_h}
        & = - (I_{\bm q}, \bm \varepsilon_{\bm q})_{\mathcal{T}_h},\\
        (\nabla \cdot (\bm \varepsilon_{\bm q} 
        +\beta \varepsilon_y), \varepsilon_y)_{\mathcal{T}_h}
        - \langle\bm \varepsilon_{\bm q} \cdot \bm n 
        + \bm \beta \cdot \bm n \varepsilon_y, \varepsilon_y\rangle_{\partial \mathcal{T}_h}
        + \langle\bm\varepsilon_{\widehat{\bm q}}\cdot \bm n 
        + \bm \beta\cdot \bm n \varepsilon_{\widehat{y}}, \varepsilon_h\rangle_{\partial \mathcal{T}_h}
        - \alpha (\varepsilon_z, \varepsilon_y)_{\mathcal{T}_h}
        & = \alpha (I_z, \varepsilon_y)_{\mathcal{T}_h},\\
        \norm[0]{\bm \varepsilon_{\bm p}}_{\mathcal{T}_h}
        - (\varepsilon_z, \nabla\cdot\bm \varepsilon_{\bm p})_{\mathcal{T}_h}
        + \langle\varepsilon_{\widehat{z}}, \bm \varepsilon_{\bm p}\cdot \bm n\rangle_{\partial \mathcal{T}_h}
        &= - (I_{\bm p}, \bm \varepsilon_{\bm p})_{\mathcal{T}_h} \\
        (\nabla \cdot (\bm \varepsilon_{\bm p} 
        - \beta \varepsilon_z), \varepsilon_z)_{\mathcal{T}_h}
        - \langle\bm \varepsilon_{\bm p} \cdot \bm n 
        - \bm \beta \cdot \bm n  \varepsilon_z, \varepsilon_z\rangle_{\partial \mathcal{T}_h}
        + \langle\bm\varepsilon_{\widehat{\bm p}}\cdot \bm n 
        - \bm \beta\cdot \bm n \varepsilon_{\widehat{z}}, \varepsilon_z\rangle_{\partial \mathcal{T}_h}
        + (\varepsilon_y, \varepsilon_z)_{\mathcal{T}_h}
        & = - \alpha (I_y, \varepsilon_z)_{\mathcal{T}_h}.
    \end{align*}
    Adding first and second equations, and the third and the fourth as well, 
    \begin{align*}
        &\norm[0]{\bm \varepsilon_{\bm q}}_{\mathcal{T}_h}^2
        + \langle \varepsilon_{\widehat{y}}, \bm \varepsilon_{\bm q}\cdot \bm n\rangle_{\partial\mathcal{T}_h}
        + \langle(\bm \varepsilon_{\widehat{\bm q}} 
        - \bm \varepsilon_{\bm q})\cdot \bm n
        + \bm \beta \cdot\bm n (\varepsilon_{\widehat{y}}
        - \varepsilon_y), \varepsilon_y\rangle_{\partial\mathcal{T}_h}
        + (\nabla\cdot(\bm \beta \varepsilon_y), \varepsilon_y)_{\mathcal{T}_h}
        - \alpha (\varepsilon_z, \varepsilon_y)_{\mathcal{T}_h}\\
        &\qquad= - (I_{\bm q}, \bm \varepsilon_{\bm q})_{\mathcal{T}_h}
        + \alpha (I_z, \varepsilon_y)_{\mathcal{T}_h},\\
        &\norm[0]{\bm \varepsilon_{\bm p}}_{\mathcal{T}_h}^2
        + \langle \varepsilon_{\widehat{z}}, \bm \varepsilon_{\bm p}\cdot \bm n\rangle_{\partial\mathcal{T}_h}
        + \langle(\bm \varepsilon_{\widehat{\bm p}} 
        - \bm \varepsilon_{\bm p})\cdot \bm n
        - \bm \beta \cdot\bm n (\varepsilon_{\widehat{z}}
        - \varepsilon_z), \varepsilon_z\rangle_{\partial\mathcal{T}_h}
        - (\nabla\cdot(\bm \beta \varepsilon_z), \varepsilon_z)_{\mathcal{T}_h}
        - (\varepsilon_y, \varepsilon_z)_{\mathcal{T}_h}\\
        &\qquad= - (I_{\bm p}, \bm \varepsilon_{\bm p})_{\mathcal{T}_h}
        -(I_y, \varepsilon_z)_{\mathcal{T}_h}.
    \end{align*}
    Then, by \eqref{eq:energy_arg_i}, \eqref{eq:energy_arg_j}, and using the identities $(\bm \beta \varepsilon_y, \nabla \varepsilon_y)_{\mathcal{T}_h} = \dfrac{1}{2}\langle\bm \beta\cdot\bm n \varepsilon_y, \varepsilon_y \rangle_{\partial \mathcal{T}_h}$ and $(\bm \beta \varepsilon_z, \nabla \varepsilon_z)_{\mathcal{T}_h} = \dfrac{1}{2}\langle\bm \beta\cdot\bm n \varepsilon_z, \varepsilon_z\rangle_{\partial \mathcal{T}_h}$, after some algebraic manipulations, we find
    \begin{align*}
        &\norm[0]{\bm \varepsilon_{\bm q}}_{\mathcal{T}_h}^2
        + \norm[0]{(\tau_1 - \dfrac{1}{2} \bm \beta\cdot\bm n)^{1/2}(\varepsilon_y - \varepsilon_{\widehat{y}})}_{\partial\mathcal{T}_h}^2
        + \langle \bm \varepsilon_{\widehat{q}}\cdot \bm n, \varepsilon_{\widehat{y}}\rangle_{\partial\mathcal{T}_h}
        + \dfrac{1}{2}\langle\bm\beta\cdot\bm n \varepsilon_{\widehat{y}}, \varepsilon_{\widehat{y}}\rangle_{\partial\mathcal{T}_h}
        - \alpha (\varepsilon_z, \varepsilon_y)_{\mathcal{T}_h}\\
        &\qquad= - (I_{\bm q}, \bm \varepsilon_{\bm q})_{\mathcal{T}_h}
        + \alpha (I_z, \varepsilon_y)_{\mathcal{T}_h},\\
        &\norm[0]{\bm \varepsilon_{\bm p}}_{\mathcal{T}_h}^2
        + \norm[0]{(\tau_2 + \dfrac{1}{2} \bm \beta\cdot\bm n)^{1/2}(\varepsilon_z 
        - \varepsilon_{\widehat{z}})}_{\partial\mathcal{T}_h}^2
        + \langle\bm \varepsilon_{\widehat{p}}\cdot \bm n, \varepsilon_{\widehat{z}}\rangle_{\partial\mathcal{T}_h}
        - \dfrac{1}{2} \langle \bm \beta \cdot \bm n \varepsilon_{\widehat{z}}, \varepsilon_{\widehat{z}}\rangle_{\partial\mathcal{T}_h}
        + (\varepsilon_y, \varepsilon_z)_{\mathcal{T}_h}\\
        &\qquad= - (I_{\bm p}, \bm \varepsilon_{\bm p})_{\mathcal{T}_h}
        - (I_z, \varepsilon_z)_{\mathcal{T}_h}.
    \end{align*}
    Now, multiplying by $\gamma$ the first of the above equations and adding it to the second one, 
    \begin{align}
    \nonumber
        &\mathcal{E}
        + \gamma \langle\bm\varepsilon_{\widehat{\bm q}}\cdot\bm n 
        + \bm \beta \cdot \bm n \varepsilon_{\widehat{y}}, \varepsilon_{\widehat{y}}\rangle_{\Gamma_h}
        - \dfrac{\gamma}{2} \langle \bm \beta \cdot \bm n \varepsilon_{\widehat{y}}, \varepsilon_{\widehat{y}}\rangle_{\partial\mathcal{T}_h}
        + \langle \bm \varepsilon_{\widehat{\bm p}} \cdot \bm n 
        - \bm \beta \cdot \bm n \varepsilon_{\widehat{z}}, \varepsilon_{\widehat{z}}\rangle_{\Gamma_h}
        + \dfrac{1}{2} \langle \bm \beta \cdot \bm n \varepsilon_{\widehat{z}}, \varepsilon_{\widehat{z}}\rangle_{\partial\mathcal{T}_h}\\
        \label{eq:aux1-energy_identity}
        &\qquad= - \gamma (I_{\bm q}, \bm \varepsilon_{\bm q})_{\mathcal{T}_h}
        + (I_z, \varepsilon_y)_{\mathcal{T}_h}
        - (I_{\bm p}, \bm \varepsilon_{\bm p})_{\mathcal{T}_h}
        - (I_y, \varepsilon_z)_{\mathcal{T}_h},
    \end{align}
    where the second and fourth terms were obtained by \eqref{eq:energy_arg_e} and \eqref{eq:energy_arg_f}, respectively.
    Next, taking into account \eqref{eq:energy_arg_i} and \eqref{eq:energy_arg_j}, we can respectively rewrite the second and fourth terms of \eqref{eq:aux1-energy_identity} as:
    \begin{align}
        \label{eq:aux2-energy_identity}
        &\gamma \langle \bm \varepsilon_{\widehat{\bm q}} \cdot \bm n
        + \bm \beta \cdot \bm n \varepsilon_{\widehat{y}}, \varphi_1 
        - \varphi_1^h\rangle_{\Gamma_h}
        = \gamma \langle \bm \varepsilon_{\bm q} \cdot \bm n
        + \tau_1 (\varepsilon_y
        - \varepsilon_{\widehat{y}})
        + \bm \beta \cdot \bm n \varepsilon_{\widehat{y}}, \varphi_1
        - \varphi_1^h \rangle_{\Gamma_h},\\
        \label{eq:aux3-energy_identity}
        &\langle \bm \varepsilon_{\widehat{\bm p}} \cdot \bm n
        - \bm \beta \cdot \bm n \varepsilon_{\widehat{z}}, \varphi_2 
        - \varphi_2^h \rangle_{\Gamma_h}
        = \langle \bm \varepsilon_{\bm p} \cdot \bm n
        + \tau_2 (\varepsilon_z - \varepsilon_{\widehat{z}})
        - \bm \beta \cdot \bm n \varepsilon_{\widehat{z}}, \varphi_2
        - \varphi_2^h\rangle_{\Gamma_h}.
    \end{align}
    In turn, since $\varepsilon_{\widehat{y}}$ and $\varepsilon_{\widehat{z}}$ are single-valued functions across the interfaces, and bearing in mind \eqref{eq:energy_arg_g} and \eqref{eq:energy_arg_h}, there hold 
    $\langle\frac{1}{2} \bm \beta \cdot \bm n \varepsilon_{\widehat{y}}, \varepsilon_{\widehat{y}}\rangle_{\partial\mathcal{T}_h}
    = \langle \frac{1}{2} \bm \beta \cdot \bm n \varepsilon_{\widehat{y}}, \varphi_1 - \varphi_1^h \rangle_{\Gamma_h}$ and
    $\langle\frac{1}{2} \bm \beta \cdot \bm n \varepsilon_{\widehat{z}}, \varepsilon_{\widehat{z}}\rangle_{\partial\mathcal{T}_h}
    = \langle \frac{1}{2} \bm \beta \cdot \bm n \varepsilon_{\widehat{z}}, \varphi_2 - \varphi_2^h\rangle_{\Gamma_h}$. Then, combining the above with \eqref{eq:aux1-energy_identity}, \eqref{eq:aux2-energy_identity}, 
    and \eqref{eq:aux3-energy_identity}, after algebraic manipulations we get \eqref{lemma_eq:energy_identity}. 

    On the other hand, by adding and subtracting $\bm\Pi_{\bm V}q$, we find
    \begin{align*}
        \varphi_1 - \varphi_1^h 
        & = \int_0^{l(\bm x)}(\bm q - \bm E(\bm q_h))(\bm x(s)) \cdot \bm m \text{d}s 
        = \int_0^{l(\bm x)}(I_{\bm q} + \bm\varepsilon_{\bm q})(\bm x(s))\cdot \bm m \text{d}s \\
        & = l \big( - \Lambda^{\bm I_{\bm q}} 
        + \bm I_{\bm q} \cdot \bm m 
        - \Lambda^{\bm \varepsilon_{\bm q}}
        +\bm \varepsilon_{\bm q} \cdot \bm m )\big).
    \end{align*}

Now, decomposing $\bm m =(\bm m \cdot \bm n) \bm n+ \bm t$, we obtain
        \begin{align*}
        \varphi_1 - \varphi_1^h     
        & = l \big( - \Lambda^{\bm I_{\bm q}} 
        + \bm I_{\bm q} \cdot \bm m 
        - \Lambda^{\bm \varepsilon_{\bm q}}
        + (\bm \varepsilon_{\bm q} \cdot \bm n )(\bm m \cdot \bm n)
        + \bm \varepsilon_{\bm q} \cdot \bm t \big).
    \end{align*}

    After reordering the terms we deduce \eqref{eq:identity_ehq_ehp_a} and performing similar steps we deduce that \eqref{eq:identity_ehq_ehp_b} holds.
\end{proof}

\begin{corollary} 
\label{cor:energy_arg_ineq}
There exists a positive constant $C$, independent of $h$, such that
\begin{align}\label{eq:energy_arg_ineq}
    \mathcal{E} + \mathcal{E}_\varphi
    \leq C \Big\{ \mathbb{T}_{\bm q,y} + \mathbb{T}_{\bm p,z} + \Big(1 + \norm[0]{\boldsymbol{\beta}}_{L^{\infty}(\Omega)} R \Big)
    ( \norm[0]{\varepsilon_y}_{\mathcal{T}_h}^2 
    + \norm[0]{\varepsilon_z}_{\mathcal{T}_h}^2) \Big\},
\end{align}
where, for $(\bm r,v) \in \{(\bm q,y),(\bm p,z)\}$,
\begin{align}
    \mathbb{T}_{\bm r,v} 
    := \norm[0]{I_{\bm{r}}}_{\mathcal{T}_h}^2
    + \norm[0]{I_{v}}_{\mathcal{T}_h}^2
    + \max_{e \in \mathcal{E}_h} \beta_e^{-1} r_e \norm[0]{I_{\bm{r}} \cdot \bm m}_{\Gamma_h,h^{\perp}}^2
    + R^2 \norm[0]{\nabla I_{\bm{r}}}_{\Omega_h^c,(h^{\perp})^2}^2. \label{defTrv}
\end{align}
\end{corollary}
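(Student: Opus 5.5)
Starting from the energy identity of Lemma~\ref{lemma:energy_identity}, the plan is to bound every term on its right-hand side and absorb into the left the contributions that carry $\mathcal{E}$ or $\mathcal{E}_\varphi$. The volume terms $-\gamma(I_{\bm q},\bm\varepsilon_{\bm q})_{\mathcal{T}_h}$ and $-(I_{\bm p},\bm\varepsilon_{\bm p})_{\mathcal{T}_h}$ are handled by Cauchy--Schwarz and Young's inequality, producing $\frac{1}{8}\mathcal{E}$ plus $\norm[0]{I_{\bm r}}^2_{\mathcal{T}_h}$. The cross terms $(I_z,\varepsilon_y)_{\mathcal{T}_h}$ and $(I_y,\varepsilon_z)_{\mathcal{T}_h}$ go directly into $\norm[0]{I_v}^2_{\mathcal{T}_h}+\norm[0]{\varepsilon_y}^2_{\mathcal{T}_h}+\norm[0]{\varepsilon_z}^2_{\mathcal{T}_h}$. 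The substance of the argument lies in the boundary terms $\mathbb{K}_y$ and $\mathbb{K}_z$, and these produce $\mathcal{E}_\varphi$ on the left.

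For $\mathbb{K}_y$ I will insert identity \eqref{eq:identity_ehq_ehp_a} for $\bm\varepsilon_{\bm q}\cdot\bm n$. The leading piece gives
\[
-\gamma\langle(\bm m\cdot\bm n\,l)^{-1}(\varphi_1-\varphi_1^h),\varphi_1-\varphi_1^h\rangle_{\Gamma_h}.
\]
Since $\bm m\cdot\bm n\le 1$, this is at most $-\mathcal{E}_{\varphi_1}$, so moving it to the left supplies the term $\mathcal{E}_{\varphi_1}$. Every remaining piece has the form $\langle X,\varphi_1-\varphi_1^h\rangle_{\Gamma_h}$, and I will bound it by $\norm[0]{l^{1/2}X}_{\Gamma_h}\,\mathcal{E}_{\varphi_1}^{1/2}/\sqrt{\gamma}$ followed by Young's inequality with a small weight, so that a fraction of $\mathcal{E}_{\varphi_1}$ is absorbed. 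The individual pieces are treated as follows.
\begin{itemize}
\item[(i)] The term $\Lambda^{I_{\bm q}}$ is bounded by \eqref{ineq:Lambdas-a}; this gives $R^2\norm[0]{\nabla I_{\bm q}}^2_{\Omega_h^c,(h^\perp)^2}$ with factors of $\beta_e^{-1}$.
\item[(ii)] The term $\Lambda^{\bm\varepsilon_{\bm q}}$ is bounded by \eqref{ineq:Lambdas-b}. Assumption \ref{Assumption:1} makes its contribution at most $\frac{1}{32}$ times $\norm[0]{\bm\varepsilon_{\bm q}}^2_{\mathcal{T}_h}$, which can be absorbed.
\item[(iii)] The term $I_{\bm q}\cdot\bm m$ uses $l\le H_e^\perp=r_eh_e^\perp$ (up to the $\beta_e$ factor). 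This produces $\beta_e^{-1}r_e\norm[0]{I_{\bm q}\cdot\bm m}^2_{\Gamma_h,h^\perp}$.
\item[(iv)] The term $\bm\varepsilon_{\bm q}\cdot\bm t$ is estimated by $\norm[0]{\bm t}_{L^\infty}$, $l\le r_eh_e^\perp$, and the discrete trace inequality $\norm[0]{(h^\perp)^{1/2}\bm\varepsilon_{\bm q}}_e\le C_{tr}\norm[0]{\bm\varepsilon_{\bm q}}_{K^e}$. Assumption \ref{Assumption:t} then makes it absorbable.
\item[(v)] The stabilization term $(\tau_1-\frac12\bm\beta\cdot\bm n)^{1/2}(\varepsilon_y-\varepsilon_{\widehat y})$ is controlled using \ref{Assumption:4} (or \ref{Assumption:2}), since $l(\tau_1-\tfrac12\bm\beta\cdot\bm n)$ is small. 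It is then absorbed into $\mathcal{E}_{\bm q,y}$.
\item[(vi)] The convective piece $\frac12\bm\beta\cdot\bm n\,\varepsilon_y$ gives $\norm[0]{\bm\beta}_{L^\infty}\norm[0]{l^{1/2}\varepsilon_y}^2_{\Gamma_h}$. A discrete trace inequality together with $l\lesssim R h^\perp$ bounds this by $\norm[0]{\bm\beta}_{L^\infty}R\norm[0]{\varepsilon_y}^2_{\mathcal{T}_h}$, which explains the factor $(1+\norm[0]{\bm\beta}_{L^\infty}R)$ in \eqref{eq:energy_arg_ineq}.
\end{itemize}
$\mathbb{K}_z$ is handled identically, using \eqref{eq:identity_ehq_ehp_b} and assumptions \ref{Assumption:2'} and \ref{Assumption:4'}.

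The main obstacle is the bookkeeping of constants in these absorptions. The coefficients $\frac{1}{32}$, $\frac14$ and $\frac1{98}$ in (A) must add up, after Young's inequality, to strictly less than the left-hand side weights. Care is needed because the Young splittings of $\mathcal{E}_{\varphi_1}$ themselves consume part of the budget, and $\beta_e^{-1}$ enters from dividing by $\bm m\cdot\bm n$. I would first check that for a Young weight of $\frac14$ on each of the five sub-terms, the assumptions yield total coefficients below $\frac12$ for both $\mathcal{E}$ and $\mathcal{E}_\varphi$. If that fails, I would instead first derive an intermediate bound on $\mathcal{E}_\varphi$ alone, in terms of $\mathcal{E}$ and data, using \eqref{eq:identity_ehq_ehp_a} rearranged, and then substitute it. The final step is to move all absorbed terms to the left and collect the data terms into $\mathbb{T}_{\bm q,y}+\mathbb{T}_{\bm p,z}$, which gives \eqref{eq:energy_arg_ineq}.
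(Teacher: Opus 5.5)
Your outline follows the paper's proof step for step: the energy identity, substituting \eqref{eq:identity_ehq_ehp_a} into $\mathbb{K}_y$, then Cauchy--Schwarz and Young term by term using Lemma~\ref{lem:Lambdas}, the discrete trace inequality, and \ref{Assumption:1}, \ref{Assumption:2}/\ref{Assumption:4}, \ref{Assumption:t}. One step, however, does not close the way you set it up. You discard the weight in the leading term, keeping only $-\gamma\langle(\bm m\cdot\bm n\,l)^{-1}(\varphi_1-\varphi_1^h),\varphi_1-\varphi_1^h\rangle_{\Gamma_h}\le-\mathcal{E}_{\varphi_1}$. You then bound every other pairing by $\|l^{1/2}X\|_{\Gamma_h}\mathcal{E}_{\varphi_1}^{1/2}/\sqrt{\gamma}$. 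For $X=(\bm m\cdot\bm n)^{-1}\bm\varepsilon_{\bm q}\cdot\bm t$ this puts the whole factor $(\bm m\cdot\bm n)^{-1}\le\beta_e^{-1}$ on $X$. After Young's inequality, the coefficient of $\gamma\|\bm\varepsilon_{\bm q}\|^2_{\mathcal{T}_h}$ is therefore a multiple of $\beta_e^{-2}r_e\|\bm t\|^2_{L^\infty(\Gamma_h)}C_{tr}^2$. But \ref{Assumption:t} only gives $\beta_e^{-1}r_e\|\bm t\|^2_{L^\infty(\Gamma_h)}C_{tr}^2\le 1/98$. So your item (iv) is absorbable only if $\beta_e$ is bounded below by an absolute constant, which is not assumed. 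Term (ii) does survive your route, because \ref{Assumption:1} already carries $\beta_e^{-2}$ and \eqref{ineq:Lambdas-b} has no $\beta_e$. The data terms only pick up extra powers of $\beta_e^{-1}$, which, as in the paper, are hidden in $C$.

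The fix is what the paper does. Keep the coercive term in its weighted form $-\gamma\|(\bm m\cdot\bm n)^{-1/2}(\varphi_1-\varphi_1^h)\|^2_{\Gamma_h,l^{-1}}$, and pair every remaining piece against $(\bm m\cdot\bm n)^{-1/2}(\varphi_1-\varphi_1^h)$. Then only $(\bm m\cdot\bm n)^{-1/2}\le\beta_e^{-1/2}$ falls on $X$. Terms without $(\bm m\cdot\bm n)^{-1}$, such as the stabilization and convective ones, just pick up a harmless factor $(\bm m\cdot\bm n)^{1/2}\le 1$. With the paper's Young weights, the $\bm t$-term then produces exactly $3\beta_e^{-1}r_e\|\bm t\|^2_{L^\infty(\Gamma_h)}C_{tr}^2\,\gamma\|\bm\varepsilon_{\bm q}\|^2_{\mathcal{T}_h}\le\frac{3}{98}\gamma\|\bm\varepsilon_{\bm q}\|^2_{\mathcal{T}_h}$. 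Since $\bm m\cdot\bm n\le 1$, the weighted norm dominates $\mathcal{E}_{\varphi_1}$, so you lose nothing on the left-hand side. With this change, the rest of your constant bookkeeping goes through as in the paper. A minor point: in (vi), Young's inequality actually yields $\|\bm\beta\|^2_{L^\infty(\Omega)}R$, and the extra factor $\|\bm\beta\|_{L^\infty(\Omega)}$ is absorbed into $C$.
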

\begin{proof}
    By \eqref{eq:energy_arg_i} and \eqref{eq:identity_ehq_ehp_a}, we can rewrite $\mathbb K_y$ as follows
    \begin{align*}
        \mathbb{K}_y
        & := - \gamma \norm[0]{(\bm m \cdot \bm n)^{-1/2}(\varphi_1 - \varphi_1^h)}_{\Gamma_h,l^{-1}}^2
        - \gamma \langle (\bm m \cdot \bm n)^{-1} \Lambda^{I_{\bm q}}, \varphi_1 - \varphi_1^h \rangle_{\Gamma_h}
        - \gamma \langle (\bm m \cdot \bm n)^{-1} \Lambda^{\bm\varepsilon_{\bm q}}, \varphi_1 - \varphi_1^h \rangle_{\Gamma_h} \\
        & \quad + \gamma \langle (\bm m \cdot \bm n)^{-1} I_{\bm q} \cdot \bm m, \varphi_1 - \varphi_1^h \rangle_{\Gamma_h}
        + \gamma \langle (\bm m \cdot \bm n)^{-1} \bm \varepsilon_{\boldsymbol{q}} \cdot \bm t, \varphi_1 - \varphi_1^h \rangle_{\Gamma_h} \\
        & \quad - \gamma \langle(\tau_1 - \dfrac{1}{2} \bm\beta \cdot \bm n)(\varepsilon_y - \varepsilon_{\widehat{y}}), \varphi_1 - \varphi_1^h \rangle_{\Gamma_h}
        - \gamma \langle \dfrac{1}{2} \bm\beta \cdot \bm n \varepsilon_y, \varphi_1 - \varphi_1^h \rangle_{\Gamma_h}.
    \end{align*}
    Bearing in mind Assumption \ref{Assumption:2} and applying the discrete trace inequality, the Cauchy-Schwarz inequality, and Young's inequality to each term of $\mathbb{K}_y$, we obtain 
    \begin{align*}
        \mathbb{K}_y
        & \leq - \dfrac{1}{2} \norm[0]{(\bm m \cdot \bm n)^{-1/2} (\varphi_1 - \varphi_1^h)}_{\Gamma_h, l^{-1}}^2
        + 3 \max_{e \in \mathcal{E}_h} \beta_e^{-1} \gamma \norm[0]{l^{1/2} \Lambda^{I_{\bm q}}}_{\Gamma_h}^2
        + 3 \max_{e \in \mathcal{E}_h} \beta_e^{-1} \gamma \norm[0]{l^{1/2} \Lambda^{\bm \varepsilon_{\bm q}}}_{\Gamma_h}^2 \\
        & \quad + 3 \max_{e \in \mathcal{E}_h} \beta_e^{-1} r_e \gamma \norm[0]{I_{\bm q} \cdot \bm m}_{\Gamma_h, h^{\perp}}^2 \\
        & \quad + 3 \max_{e \in \mathcal{E}_h} \beta_e^{-1} \norm[0]{\bm t}_{L^{\infty}(\Gamma_h)}^2 C_{tr}^2 r_e \gamma \norm[0]{\bm \varepsilon_{\bm q}}_{\mathcal{T}_h}^2
        + \dfrac{3}{4}\norm[0]{(\tau_1 - \dfrac{1}{2}\bm \beta \cdot \bm n)^{1/2} (\varepsilon_y - \varepsilon_{\widehat{y}})}_{\Gamma_h}^2\\
        & \quad + \dfrac{3}{2} \norm[0]{\bm \beta}_{L^{\infty}(\Omega)}^2 \max_{e \in \mathcal{E}_h} r_e C_{tr}^2 \norm[0]{\varepsilon_y}_{\mathcal{T}_h}^2.
    \end{align*}
    The deduction of the last three terms follows from the facts that $\boldsymbol{\beta} \in [L^{\infty}(\Omega)]^n$, together with the discrete trace inequality with $C_{tr}$. Now, applying \eqref{ineq:Lambdas}, and considering that $R = \max_{e \in \mathcal{T}_h} r_e$, we find
    \begin{align}
        \nonumber
        \mathbb{K}_y
        & \leq - \dfrac{1}{2} \norm[0]{(\bm m \cdot \bm n)^{-1/2} (\varphi_1 - \varphi_1^h)}_{\Gamma_h, l^{-1}}^2
        + R^2 \gamma \norm[0]{\nabla I_{\bm p}}_{\Omega_h^c, (h^{\perp})^2}^2
        + \max_{e \in \mathcal{E}_h} \beta_e^{-1} r_e^3 C_{ext}^2 C_{inv}^2 \gamma \norm[0]{\bm \varepsilon_{\bm q}}_{\mathcal{T}_h}^2 \\
        \nonumber
        & \quad + 3 \max_{e \in \mathcal{E}_h} \beta_e r_e \gamma \norm[0]{I_{\bm q} \cdot \bm m}_{\Gamma_h, h^{\perp}}^2
        + 3 \max_{e \in \mathcal{E}_h} \beta_e r_e \norm[0]{\bm t}_{L^{\infty}(\Gamma_h)}^2 C_{tr}^2 \norm[0]{\bm \varepsilon_{\bm q}}_{\mathcal{T}_h}^2
        + \dfrac{3}{4} \norm[0]{(\tau_1 - \dfrac{1}{2} \bm \beta \cdot \bm n)^{1/2} (\varepsilon_y - \varepsilon_{\widehat{y}})}_{\Gamma_h}^2 \\
        \label{eq:energy_arg_ineq-aux1}
        & \quad + \dfrac{3}{2} \norm[0]{\bm \beta}_{L^{\infty}(\Omega)}^2 R C_{tr}^2 \norm[0]{\varepsilon_y}_{\mathcal{T}_h}^2 
    \end{align}
    An analogous estimate can be obtained for $\mathbb K_z$ following the same steps. On the other hand, applying Cauchy-Schwarz and Young's inequality in \eqref{lemma_eq:energy_identity} and combining with \eqref{eq:energy_arg_ineq-aux1}, we get
    \begin{align*}
        & \dfrac{\gamma}{2} \norm[0]{\bm \varepsilon_q}_{\mathcal{T}_h}^2
        + \gamma \norm[0]{(\tau_1 - \dfrac{1}{2} \bm \beta \cdot \bm n)^{1/2} (\varepsilon_y - \varepsilon_{\widehat{y}})}_{\partial \mathcal{T}_h}^2
        + \gamma \norm[0]{\varphi_1 - \varphi_1^h}_{\Gamma_h, l^{-1}}^2
        + \gamma \norm[0]{(\bm m \cdot \bm n)^{-1/2} (\varphi_1 - \varphi_1^h)}_{\Gamma_h, l^{-1}}^2 \\ 
        & + \dfrac{1}{2} \norm[0]{\bm \varepsilon_p}_{\mathcal{T}_h}^2
        + \norm[0]{(\tau_2 + \dfrac{1}{2} \bm \beta \cdot \bm n)^{1/2} (\varepsilon_z - \varepsilon_{\widehat{z}})}_{\partial \mathcal{T}_h}^2
        + \norm[0]{\varphi_2 - \varphi_2^h}_{\Gamma_h, l^{-1}}^2
        + \norm[0]{(\bm m \cdot \bm n)^{-1/2} (\varphi_2 - \varphi_2^h)}_{\Gamma_h, l^{-1}}^2 \\
        & \leq \dfrac{\gamma}{2} \norm[0]{I_{\bm q}}_{\mathcal{T}_h}^2
        + \dfrac{1}{2} \norm[0]{I_z}_{\mathcal{T}_h}^2
        + \dfrac{1}{2} \norm[0]{\varepsilon_y}_{\mathcal{T}_h}^2
        + \dfrac{1}{2} \norm[0]{I_{\bm p}}_{\mathcal{T}_h}^2 
        + \dfrac{1}{2} \norm[0]{I_y}_{\mathcal{T}_h}^2 
        + \dfrac{1}{2} \norm[0]{\varepsilon_z}_{\mathcal{T}_h}^2 \\
        & \quad + R^2 \gamma \norm[0]{\nabla I_{\bm q}}_{\Omega_h^c, (h^{\perp})^2}^2
        + \max_{e \in \mathcal{E}_h} \beta_e^{-2} r_e^3 C_{ext}^2 C_{inv}^2 \gamma \norm[0]{\bm \varepsilon_q}_{\mathcal{T}_h}^2
        + 3 \max_{e \in \mathcal{E}_h} \beta_e^{-1} r_e \gamma \norm[0]{I_{\bm q} \cdot \bm m}_{\Gamma_h, h^{\perp}}^2 \\
        & \quad + 3 \max_{e \in \mathcal{E}_h} \beta_e^{-1} r_e \gamma  \norm[0]{\bm t}_{L^{\infty}(\Gamma_h)} C_{tr}^2 \norm[0]{\bm \varepsilon_{\bm q}}_{\mathcal{T}_h}^2
        + \dfrac{3}{4} \gamma \norm[0]{(\tau_1 - \dfrac{1}{2} \bm \beta \cdot \bm n)^{1/2} (\varepsilon_y - \varepsilon_{\widehat{y}})}_{\Gamma_h}^2
        + \dfrac{3}{2} \gamma \norm[0]{\bm \beta}_{L^{\infty}(\Omega)} R C_{tr}^2 \norm[0]{\varepsilon_y}_{\mathcal{T}_h}^2 \\
        & \quad + R^2 \norm[0]{\nabla I_{\bm p}}_{\Omega_h^c, (h^{\perp})^2}^2
        + \max_{e \in \mathcal{E}_h} \beta_e^{-2} r_e^3 C_{ext}^2 C_{inv}^2 \norm[0]{\bm \varepsilon_p}_{\mathcal{T}_h}^2
        + 3 \max_{e \in \mathcal{E}_h} \beta_e^{-1} r_e \norm[0]{I_{\bm p} \cdot \bm m}_{\Gamma_h, h^{\perp}}^2 \\
        & \quad + 3 \max_{e \in \mathcal{E}_h} \beta_e^{-1} r_e \norm[0]{\bm t}_{L^{\infty}(\Gamma_h)}^2 C_{tr}^2 \norm[0]{\bm \varepsilon_{\bm p}}_{\mathcal{T}_h}^2
        + \dfrac{3}{4} \norm[0]{(\tau_1 - \dfrac{1}{2} \bm \beta \cdot \bm n)^{1/2} (\varepsilon_z - \varepsilon_{\widehat{z}})}_{\Gamma_h}^2
        + \dfrac{3}{2} \norm[0]{\bm \beta}_{L^{\infty}(\Omega)}^2 R C_{tr}^2 \norm[0]{\varepsilon_z}_{\mathcal{T}_h}^2.
     \end{align*}
     Therefore, using the closeness assumptions \cref{Assumption:1,Assumption:t} and reorganizing terms we find that \eqref{eq:energy_arg_ineq} holds.
\end{proof}

The estimate obtained in Corollary \ref{cor:energy_arg_ineq} allows us to 
control $\bm \varepsilon_q$ and $\bm \varepsilon_p$ as a function of 
$\varepsilon_y$ and $\varepsilon_z$. The latter terms will be bounded by 
making use of a duality argument, which is addressed in the next section.

\subsection{Duality argument}\label{ch:duality_argument}
In order to obtain the desire estimates for $\varepsilon_y$ and 
$\varepsilon_z$, we introduce the following equivalent optimal control 
problem (for more details, see \cite[Section 4.3]{ZHU20162}): 
\begin{equation*}
    \min_{\psi\in L^2(\Omega)} \dfrac{1}{2} \norm[0]{\Psi - \Theta_2}^2_{L^2(\Omega)} + \dfrac{\gamma}{2} \norm[0]{\psi}_{L^2(\Omega)}^2
\end{equation*}
subject to
\begin{subequations}
\begin{alignat*}{2}
    - \Delta \Psi - \bm\beta \cdot \nabla \Psi 
    & = \Theta_1 - \psi \qquad &&\text{in } \Omega, \\
    \Psi & = 0 \qquad && \text{on }\Gamma.
\end{alignat*}
\end{subequations}
From which the following dual system can be deduced:\\
\begin{subequations}\label{eq:dual_problem}
\begin{minipage}{0.45\textwidth}
\begin{alignat}{4}
    \label{eq:dual_problem_a}
    \bm \phi - \nabla \Psi 
    & = & \bm 0 & \quad\text{in }\Omega,\\
    \label{eq:dual_problem_b}
    - \nabla \cdot \bm\phi - \bm\beta \cdot \nabla\Psi + \psi  
    & = & \Theta_1 &\quad\text{in }\Omega,\\
    \label{eq:dual_problem_c}
    \Psi & = & 0 & \quad\text{on }\Gamma,
\end{alignat}
\end{minipage}   
\begin{minipage}{0.45\textwidth}
\begin{alignat}{4}    
    \label{eq:dual_problem_d}
    \bm \phi^{\bm p} - \nabla\Psi^{z} 
    & = & \bm 0 &\quad\text{in }\Omega,\\
    \label{eq:dual_problem_e}
    - \nabla \cdot \bm\phi^{\bm p} + \bm\beta \cdot \nabla\Psi^{z} - \Psi 
    & = & \Theta_2 &\quad \text{in }\Omega,\\
    \label{eq:dual_problem_f}
    \Psi^{z} & = & 0 &\quad \text{on }\Gamma,
\end{alignat}
\end{minipage}   

\noindent and
\begin{equation}\label{eq:dual_problem_g}
  \gamma \psi - \Psi^z = 0 \quad\text{in }\Omega.
\end{equation}
\end{subequations}
To shorten notation, let $\Theta:= \norm[0]{\Theta_1}_{\Omega} + 
\norm[0]{\Theta_2}_{\Omega}$. We assume the following regularity estimate (see \cite[Proposition 5.1]{manzonioptimal})
\begin{align}
    \label{eq:eliptic_regularity}
    \norm[0]{\Psi}_{2,\Omega} + \norm[0]{\bm\phi}_{1,\Omega}
    + \norm[0]{\Psi^z}_{2,\Omega} + \norm[0]{\bm\phi^{\bm r}}_{1,\Omega}
    & \lesssim \Theta.
\end{align}
This holds true, for instance, for convex
polyhedral domains or when $\Gamma$ is $\mathcal{C}^2$. On the other hand, we 
observe that \eqref{eq:dual_problem} is posed on $\Omega$, whereas the HDG 
method seeks the solution in $\Omega_h$. In other words, the duality argument 
will involve expressions to take into account the influence of the mismatch 
between $\Omega_h$ and $\Omega$ in \eqref{ch:duality_argument}. More 
precisely, we consider the following result that can be obtained similarly to 
the proof of \cite[Lemma 5.5]{paper2}.

\begin{lemma}
\label{lemma:dual_estimates}
Under the assumptions given in Section {\rm\ref{ch:preliminaries}} and 
assuming that {\rm\eqref{eq:eliptic_regularity}} holds true, we have 
\begin{align*}
    \norm[0]{\Psi - P_M\Psi}_{\Gamma_h,(h^{\perp})^{-1}}
    & \lesssim h \Theta,&
    \norm[0]{\Psi + l \partial_{\bm n}\Psi}_{\Gamma_h,l^{-3}}
    & \lesssim \Theta,\\
    \norm[0]{\partial_{\bm n}\Psi - P_M\partial_{\bm n}\Psi}_{\Gamma_h,l}
    & \lesssim R h \Theta ,&
    \norm[0]{\Psi}_{\Gamma_h,l^{-2}}
    & \lesssim \Theta.
\end{align*}
In turn, the same estimates hold for $\Psi^z$.
\end{lemma}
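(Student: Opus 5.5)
The plan is to reduce all four estimates to Taylor expansion along the transfer segments. The tools are the homogeneous boundary condition $\Psi=0$ on $\Gamma$, the local geometry of $K_{ext}^e$ (which has height $H_e^\perp=r_eh_e^\perp$ and is controlled through the norm equivalence $\tnorm{\cdot}_e\simeq\|\cdot\|_{K_{ext}^e}$), and standard trace and approximation properties of $P_M$ on $\Gamma_h$. The argument follows \cite[Lemma 5.5]{paper2}, now allowing $\bm m\neq\bm n$. Throughout we use $l\lesssim r_eh_e^\perp$ on $e$, which follows from $\bm m\cdot\bm n\ge\beta_e$, and the regularity \eqref{eq:eliptic_regularity}, so that $\|\Psi\|_{2,\Omega}\lesssim\Theta$. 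The estimates for $\Psi^z$ are identical, since $\Psi^z$ also vanishes on $\Gamma$ and has the same regularity.

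The central identity comes from \eqref{def:transf}. For $\bm x\in e$ we have
\[
\Psi(\bm x)=\Psi(\phi(\bm x))-\int_0^{l}\nabla\Psi(\bm x(s))\cdot\bm m\,ds=-\int_0^l\nabla\Psi(\bm x(s))\cdot\bm m\,ds .
\]
Hence $\Psi(\bm x)+l\,\partial_{\bm m}\Psi(\bm x)=-\int_0^l\big(\nabla\Psi(\bm x(s))-\nabla\Psi(\bm x)\big)\cdot\bm m\,ds$, and by Cauchy--Schwarz its modulus is at most $l\big(\int_0^l|D^2\Psi|^2\big)^{1/2}$. Integrating $l^{-3}|\cdot|^2$ over $e$ and using the norm equivalence yields $\|\Psi+l\partial_{\bm m}\Psi\|_{\Gamma_h,l^{-3}}\lesssim\|D^2\Psi\|_{\Omega_h^c}\lesssim\Theta$. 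The same identity gives the fourth bound, $\|\Psi\|_{\Gamma_h,l^{-2}}\lesssim\|\nabla\Psi\|$ on the paths, after a trace-type argument with $\|\nabla\Psi\|_{1,\Omega}\lesssim\Theta$.

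The stated estimate involves $\partial_{\bm n}$ rather than $\partial_{\bm m}$. Since $\partial_{\bm m}=(\bm m\cdot\bm n)\partial_{\bm n}+\partial_{\bm t}$, I will either adopt the convention $\bm m=\bm n$ used in \cite{paper2}, or control the tangential remainder $l\,\bm t\cdot\nabla\Psi$. For the latter, Assumption \ref{Assumption:t} bounds $|\bm t|^2$ by a multiple of $r_e^{-1}$, and a trace estimate for $\nabla\Psi$ on $\Gamma_h$ absorbs the resulting term into $\Theta$. I expect this tangential term to be the main obstacle, because Assumption \ref{Assumption:t} does not make $\bm t$ small in $h$. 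The fallback is to state the bound with $\partial_{\bm m}$, which is what actually enters \eqref{eq:identity_ehq_ehp_a}.

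The remaining two estimates are projection bounds. On each boundary facet $e$, the standard facet approximation gives $\|\Psi-P_M\Psi\|_e\lesssim h_e^{3/2}|\Psi|_{2,K^e}$. Since $h_e^\perp\simeq h_e$ by shape regularity, this yields $\|\Psi-P_M\Psi\|_{\Gamma_h,(h^\perp)^{-1}}\lesssim h\|\Psi\|_{2,\Omega_h}\lesssim h\Theta$. Similarly, $\|\partial_{\bm n}\Psi-P_M\partial_{\bm n}\Psi\|_e\lesssim h^{1/2}\|\nabla\Psi\|_{1,K^e}$, using \eqref{eq:proj-L2-faces-estimate} with $l=0$. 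Multiplying by $l^{1/2}\le(r_eh_e^\perp)^{1/2}$ then gives the bound $R^{1/2}h\Theta$. To reach the stated $Rh$, I will use $R\le$ const by \ref{Assumption:1}, or else accept the weight $R^{1/2}$. Summing over $e\in\mathscr E_h^\partial$ completes the estimates.
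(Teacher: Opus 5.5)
Your overall route is the argument of \cite[Lemma 5.5]{paper2}, to which the paper defers; there it is carried out with $\bm m=\bm n$. The route combines exact Taylor expansion along the transfer segments using $\Psi=0$ on $\Gamma$, the norm equivalence on $K^e_{ext}$, and facet approximation for $P_M$. As written, however, two steps fail and a third does not reach the stated bound.

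\textbf{The Taylor remainder has the wrong power of $l$.} From $\Psi+l\partial_{\bm m}\Psi=-\int_0^l(l-t)\,\partial^2_{\bm m}\Psi(\bm x+t\bm m)\,dt$, Cauchy--Schwarz gives $|\Psi+l\partial_{\bm m}\Psi|^2\le\frac{l^3}{3}\int_0^l|D^2\Psi(\bm x+t\bm m)|^2\,dt$, exactly as in the proof of Lemma~\ref{lem:Lambdas}. With your factor $l$ in place of $l^{3/2}/\sqrt3$, the $l^{-3}$-weighted integral becomes $\int_e l^{-1}\int_0^l|D^2\Psi|^2$, which the norm equivalence does not control.

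\textbf{The tangential absorption fails.} The difference $(\Psi+l\partial_{\bm n}\Psi)-(\Psi+l\partial_{\bm m}\Psi)=l(\bm n-\bm m)\cdot\nabla\Psi$ cannot be absorbed. It also contains the normal part $l(1-\bm m\cdot\bm n)\partial_{\bm n}\Psi$, not only $-l\,\bm t\cdot\nabla\Psi$. In the $l^{-3}$ weight this term equals $\|l^{-1/2}(\bm n-\bm m)\cdot\nabla\Psi\|_{\Gamma_h}$. Assumption~\ref{Assumption:t} only gives $|\bm t|^2\lesssim r_e^{-1}$, with no smallness relative to $l$, so a trace bound on $\nabla\Psi$ leaves an unbounded factor $l^{-1/2}$.

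In fact the $\partial_{\bm n}$ version is false when $\bm m$ makes a fixed angle $\theta>0$ with $\bm n$. Take a flat piece of $\Gamma$, a parallel boundary facet at distance $d$, and $\Psi$ locally equal to the distance to $\Gamma$. Then $l=d/\cos\theta$ and $\Psi+l\partial_{\bm n}\Psi=d(1-1/\cos\theta)$, so the left-hand side is of order $d^{-1/2}\to\infty$ while $\Theta$ stays fixed. Hence your fallback is forced: either assume $\bm m=\bm n$ (the setting of \cite{paper2}), or prove the bound with $\partial_{\bm m}$.

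Where the estimate is actually used, in $\mathbb S^3_y$, the relevant quantity is $\Psi+(\bm m\cdot\bm n)l\partial_{\bm n}\Psi=(\Psi+l\partial_{\bm m}\Psi)-l\,\bm t\cdot\nabla\Psi$, measured in the $l^{-1}$ weight. There your trace idea does work: $\|l\,\bm t\cdot\nabla\Psi\|_{\Gamma_h,l^{-1}}\le(\max l)^{1/2}\|\nabla\Psi\|_{\Gamma_h}\lesssim(Rh)^{1/2}\Theta$. This is consistent with the factor $R^{1/2}h^{1/2}$ stated for $|\mathbb S^3_y|$.

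\textbf{The third estimate reaches only $R^{1/2}h$.} Your computation yields $\|\partial_{\bm n}\Psi-P_M\partial_{\bm n}\Psi\|_{\Gamma_h,l}\lesssim R^{1/2}h\Theta$. Boundedness of $R$ does not turn $R^{1/2}$ into $R$; that would need $R\gtrsim1$. When $R\to0$ (e.g.\ $R\simeq h^\delta$), the facet error of $\nabla\Psi\in H^1$ is in general only $O(h^{1/2}\Theta)$ and $\max l\simeq Rh$. So $R^{1/2}h$ is what $H^2$ regularity gives, and you should state the estimate that way. This is harmless downstream: $\mathbb S^4_y$ then contributes a factor $R^{1/2}h\le R^{1/2}h^{1/2}$, which already appears in the proof of Theorem~\ref{lemma:EyEz_estimates}.

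Two smaller points:
\begin{itemize}
\item Your facet bound $\|\Psi-P_M\Psi\|_e\lesssim h_e^{3/2}|\Psi|_{2,K^e}$ requires $k\ge1$. For $k=0$ only $h_e^{1/2}|\Psi|_{1,K^e}$ is available, so the first estimate needs a different argument.
\item The fourth estimate is most cleanly obtained from the second, via $\|\Psi\|_{\Gamma_h,l^{-2}}\le(\max l)^{1/2}\|\Psi+l\partial_{\bm m}\Psi\|_{\Gamma_h,l^{-3}}+\|\nabla\Psi\|_{\Gamma_h}$ together with a trace bound for $\nabla\Psi$. This route is insensitive to the choice of $\bm m$.
\end{itemize}
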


\begin{lemma}
\label{lemma:norm_Ehy_Ehz}
We have that
\begin{subequations}
\begin{align}
    \label{eq:norm_Ehy_Ehz_a}
    \begin{split}
    \norm[0]{\varepsilon_y}_{\mathcal{T}_h}^2
    = & - (I_{\bm q}, \bm \Pi_{\bm V} \bm \phi)_{\mathcal{T}_h}
    + (\bm\varepsilon_{\bm q}, \bm\phi - \bm\Pi_{\bm V}\bm\phi)_{\mathcal{T}_h}
    + (\alpha e^{z}, \Pi_W\Psi)_{\mathcal{T}_h}
    + (\varepsilon_y, \alpha \Psi^z)_{\mathcal{T}_h}
    + \mathbb{S}^y,\end{split}\\
    \label{eq:norm_Ehy_Ehz_b}
    \begin{split}
    \norm[0]{\varepsilon_z}_{\mathcal{T}_h}^2
    = & - (I_{\bm p}, \widetilde{\bm \Pi}_{\bm V} \bm\phi^{\bm p})_{\mathcal{T}_h}
    + (\bm \varepsilon_{\bm p}, \bm\phi^{\bm p} - \widetilde{\bm\Pi}_{\bm V} \bm\phi^{\bm p})_{\mathcal{T}_h}
    - (e^{y}, \widetilde{\Pi}_W \Psi^z)_{\mathcal{T}_h}
    - (\varepsilon_z, \Psi)_{\mathcal{T}_h}
    + \mathbb{S}^z,
    \end{split}
\end{align}
\end{subequations}
where,
\begin{align*}
    \mathbb{S}_y 
    &= - \langle \varepsilon_{\widehat y} - \varepsilon_y, \bm\beta \cdot \bm n (\Pi_W \Psi - P_M\Psi)\rangle_{\partial\mathcal{T}_h}
    - \langle \varepsilon_{\widehat{y}}, \bm\phi \cdot \bm n \rangle_{\Gamma_h}
    - \langle \bm\varepsilon_{\widehat{\bm q}} \cdot \bm n + \bm\beta \cdot \bm n \varepsilon_{\widehat y}, \Psi\rangle_{\Gamma_h},\\
    \mathbb{S}_z
    &= \langle \varepsilon_{\widehat{z}} - \varepsilon_{z}, \bm\beta \cdot \bm n (\widetilde{\Pi}_W \Psi^z - P_M\Psi^z) \rangle_{\partial \mathcal{T}_h}
    - \langle \varepsilon_{\widehat{z}}, \bm \phi^{\bm p} \cdot \bm n \rangle_{\Gamma_h}
    - \langle \bm\varepsilon_{\widehat{\bm p}} \cdot \bm n -\bm\beta \cdot \bm n \varepsilon_{\widehat z}, \Psi^z \rangle_{\Gamma_h}.
\end{align*}    
\end{lemma}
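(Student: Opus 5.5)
We follow the standard HDG duality argument of \cite{chen2018hdg,ZHU20162}, adapted to the unfitted setting as in \cite{paper2,henriquez2023control}. For \eqref{eq:norm_Ehy_Ehz_a}, we take $\Theta_1 = \varepsilon_y$ and $\Theta_2 = 0$ in the dual system \eqref{eq:dual_problem}, extended by zero outside $\Omega_h$ where needed. We then write $\norm[0]{\varepsilon_y}_{\mathcal{T}_h}^2 = (\varepsilon_y, -\nabla\cdot\bm\phi - \bm\beta\cdot\nabla\Psi + \psi)_{\mathcal{T}_h}$. Integrating by parts elementwise turns the divergence and convection terms into volume terms $(\nabla\varepsilon_y,\bm\phi)$, $(\bm\beta\varepsilon_y,\nabla\Psi)$ plus boundary terms on $\partial\mathcal T_h$.

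We then test the error equations \eqref{eq:energy_arg} with projections of the dual solution. We take $\bm r_1 = \bm\Pi_{\bm V}\bm\phi$ in \eqref{eq:energy_arg_a}, $w_1 = \Pi_W\Psi$ in \eqref{eq:energy_arg_b}, and $\mu_1 = P_M\Psi$ in \eqref{eq:energy_arg_e}. We use the projection here with the sign of $\bm\beta$ chosen to match the dual operator; in \eqref{eq:proj_hdg} it is applied to $(\bm\phi,\Psi)$ up to the sign convention $\bm q=-\nabla y$ versus $\bm\phi=\nabla\Psi$. The defining orthogonality relations \eqref{eq:proj_hdg_a}--\eqref{eq:proj_hdg_c} eliminate almost all volume terms: $(\varepsilon_y,\nabla\cdot\bm\Pi_{\bm V}\bm\phi)$ matches $(\varepsilon_y,\nabla\cdot\bm\phi)$ up to a face term, and $(\bm\varepsilon_{\bm q},\nabla\Pi_W\Psi)$ matches $(\bm\varepsilon_{\bm q},\nabla\Psi)$. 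What remains in the volume is $-(I_{\bm q},\bm\Pi_{\bm V}\bm\phi)$ from \eqref{eq:energy_arg_a}, the mismatch $(\bm\varepsilon_{\bm q},\bm\phi-\bm\Pi_{\bm V}\bm\phi)$, and the coupling term $\alpha(\varepsilon_z+I_z,\Pi_W\Psi)=(\alpha e^z,\Pi_W\Psi)$ from \eqref{eq:energy_arg_b}. The $\psi$-term gives $(\varepsilon_y,\psi)=(\varepsilon_y,\alpha\Psi^z)$ by \eqref{eq:dual_problem_g}.

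The face terms are handled next. On interior faces we use single-valuedness of $\varepsilon_{\widehat y}$, $P_M\Psi$ and $\bm\phi\cdot\bm n$, the conservativity \eqref{eq:energy_arg_e}, and the identity \eqref{eq:energy_arg_i} for the numerical flux. Combined with \eqref{eq:proj_hdg_c}, these collapse to the single convective residual $-\langle\varepsilon_{\widehat y}-\varepsilon_y,\bm\beta\cdot\bm n(\Pi_W\Psi-P_M\Psi)\rangle_{\partial\mathcal T_h}$. This residual survives because \eqref{eq:proj_hdg_c} involves $P_M$ rather than $\Pi_W$ in the $\bm\beta$ term. On $\Gamma_h$ the flux condition \eqref{eq:energy_arg_e} is not available and $\Psi\neq0$ there, since $\Psi$ vanishes only on $\Gamma$. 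So the boundary contributions do not cancel; they remain as $-\langle\varepsilon_{\widehat y},\bm\phi\cdot\bm n\rangle_{\Gamma_h}-\langle\bm\varepsilon_{\widehat{\bm q}}\cdot\bm n+\bm\beta\cdot\bm n\varepsilon_{\widehat y},\Psi\rangle_{\Gamma_h}$. Here $P_M\Psi$ on $\Gamma_h$ is replaced by $\Psi$ because the test flux is polynomial on each facet. These three terms together form $\mathbb S^y$.

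Identity \eqref{eq:norm_Ehy_Ehz_b} follows symmetrically. We take $\Theta_1=0$, $\Theta_2=\varepsilon_z$, and test \eqref{eq:energy_arg_c}, \eqref{eq:energy_arg_d} and \eqref{eq:energy_arg_f} with $\widetilde{\bm\Pi}_{\bm V}\bm\phi^{\bm p}$, $\widetilde\Pi_W\Psi^z$ and $P_M\Psi^z$. We use \eqref{eq:proj_hdg_d} and the coupling $-\Psi$ in \eqref{eq:dual_problem_e}, which produces $-(e^y,\widetilde\Pi_W\Psi^z)-(\varepsilon_z,\Psi)$. The main obstacle is the careful sign and bookkeeping of the boundary terms. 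Specifically, one must check that the convective face terms combine exactly via $\nabla\cdot\bm\beta=0$, the identity $(\bm\beta w,\nabla w)_K=\tfrac12\langle\bm\beta\cdot\bm n w,w\rangle_{\partial K}$, and \eqref{eq:proj_hdg_c}. One must also ensure that every term on $\Gamma_h$ is kept rather than discarded, unlike in the fitted case. We would verify this by first doing the computation for $\bm\beta=0$ and then tracking the additional $\bm\beta$ terms separately.
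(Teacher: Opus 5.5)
Your outline follows the paper's route: take $\Theta_1=\varepsilon_y$, test \eqref{eq:energy_arg_a}--\eqref{eq:energy_arg_b} with $\bm\Pi_{\bm V}\bm\phi$ and $\Pi_W\Psi$, integrate by parts, remove volume terms with \eqref{eq:proj_hdg_a}--\eqref{eq:proj_hdg_b} and $\nabla\cdot\bm\beta=0$, cancel interior faces, then use \eqref{eq:energy_arg_i} and \eqref{eq:proj_hdg_c}. The gap is the step you postpone as ``bookkeeping'': it does not close as you assert. Carrying it out, one arrives at
\begin{align*}
\mathbb S_y={}&-\big\langle \varepsilon_{\widehat y}-\varepsilon_y,\,(\bm\Pi_{\bm V}\bm\phi-\bm\phi)\cdot\bm n+\bm\beta\cdot\bm n\,(P_M\Psi-\Psi)+\tau_1(\Psi-\Pi_W\Psi)\big\rangle_{\partial\mathcal T_h}\\
&-\big\langle \varepsilon_{\widehat y}-\varepsilon_y,\,\bm\beta\cdot\bm n\,(\Pi_W\Psi-P_M\Psi)\big\rangle_{\partial\mathcal T_h}-\langle\varepsilon_{\widehat y},\bm\phi\cdot\bm n\rangle_{\Gamma_h}-\langle\bm\varepsilon_{\widehat{\bm q}}\cdot\bm n+\bm\beta\cdot\bm n\,\varepsilon_{\widehat y},\Psi\rangle_{\Gamma_h}.
\end{align*}
However, \eqref{eq:proj_hdg_c} applied to $(\bm\phi,\Psi)$ annihilates the combination containing $\tau_1(\Pi_W\Psi-\Psi)$, not $\tau_1(\Psi-\Pi_W\Psi)$. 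So the first bracket leaves $-2\langle\varepsilon_{\widehat y}-\varepsilon_y,\tau_1(\Psi-\Pi_W\Psi)\rangle_{\partial\mathcal T_h}$ instead of vanishing. This is the concrete effect of $\bm\phi=+\nabla\Psi$ versus $\bm q=-\nabla y$, which you notice but attach to the wrong term. The volume condition \eqref{eq:proj_hdg_a} with $+\bm\beta$ on $(\bm\phi,\Psi)$ is exactly right: it makes $(\nabla\varepsilon_y,\bm\phi-\bm\Pi_{\bm V}\bm\phi)_{\mathcal T_h}$ cancel the term $(\bm\beta\cdot\nabla\varepsilon_y,\Psi-\Pi_W\Psi)_{\mathcal T_h}$ that comes from integrating the convection by parts. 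With a $-\bm\beta$ projection these two terms would add instead. What flips is the sign of the stabilization term in the face condition. The paper's own proof has the same slip at ``suitably applying \eqref{eq:proj_hdg_c}''.

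A consistent repair is to project $(-\bm\phi,\Psi)$ with $\widetilde\Pi_h$, which is well-defined since $\tau_2+\frac12\bm\beta\cdot\bm n>0$, and to use $-\widetilde{\bm\Pi}_{\bm V}(-\bm\phi)$ in place of $\bm\Pi_{\bm V}\bm\phi$. This pair satisfies the same volume conditions as \eqref{eq:proj_hdg_a}--\eqref{eq:proj_hdg_b}. Its face condition, together with (B.2) in the form $\tau_1-\bm\beta\cdot\bm n=\tau_2$, collapses both $\partial\mathcal T_h$ residuals above into $-\langle\varepsilon_{\widehat y}-\varepsilon_y,\bm\beta\cdot\bm n(\Psi-P_M\Psi)\rangle_{\partial\mathcal T_h}$, which is $\mathbb S^2_y$ alone. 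Symmetrically, \eqref{eq:norm_Ehy_Ehz_b} needs $\Pi_h$ applied to $(-\bm\phi^{\bm p},\Psi^z)$. The identity literally as stated would instead require a face condition with $-\tau_1$, a projection the paper never defines. The discrepancy is of the type $\mathbb S^1_y$ and is bounded as in Corollary \ref{lemma:ineq_Sy_Sz}, so nothing downstream breaks, but this computation is the content of the lemma and cannot be deferred.

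Two smaller points. First, on $\Gamma_h$ no replacement of $P_M\Psi$ by $\Psi$ is needed, and your justification (``the flux is polynomial'') fails for $\bm\beta\in W^{1,\infty}$. Instead, split $\Pi_W\Psi=(\Pi_W\Psi-\Psi)+\Psi$. On interior faces, $\langle\bm\beta\cdot\bm n\,\varepsilon_{\widehat y},\Psi\rangle$ cancels because $\bm n$ changes sign, and $\langle\bm\varepsilon_{\widehat{\bm q}}\cdot\bm n,\Psi\rangle$ vanishes by \eqref{eq:energy_arg_e} with $\mu_1=P_M\Psi$, since its jump is polynomial by (B.1). Second, do not fix $\Theta_2=0$ in (a) and $\Theta_1=0$ in (b). Theorem \ref{lemma:EyEz_estimates} adds the two identities with one common dual solution ($\Theta_1=\varepsilon_y$, $\Theta_2=\varepsilon_z$), so that the coupling terms nearly cancel.
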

\begin{proof}

We follow the steps carried out in \cite[Lemma 4.3]{henriquez2023control} 
adapted to our context (see also \cite[Lemma 4.6]{ZHU20162}). Let $\Theta_1 
= \varepsilon_y$ in $\Omega_h$ and $\Theta_1 = 0$ in $\Omega\setminus 
\Omega_h$. Then, adding a subtracting $\bm\Pi_{\bm V}\bm \phi$ and 
$\Pi_{W} \Psi$, we have that
\begin{align*}
    \norm[0]{\varepsilon_y}_{\mathcal{T}_h}^2
    = & - (\varepsilon_y, \nabla \cdot \bm\Pi_{\bm V} \bm\phi)_{\mathcal{T}_h}
    - (\varepsilon_y, \nabla\cdot(\bm\phi-\bm\Pi_{\bm V}\bm \phi))_{\mathcal{T}_h}
    - (\bm \beta \varepsilon_y, \nabla \Pi_W \Psi)_{\mathcal{T}_h}
    - (\varepsilon_y, \bm\beta \cdot \nabla(\Psi - \Pi_W\Psi))_{\mathcal{T}_h}\\
    &
    + (\bm\varepsilon_{\bm q}, \bm \Pi_{\bm V}\bm \phi)_{\mathcal{T}_h}
    + (\bm\varepsilon_{\bm q}, \bm\phi - \bm\Pi_{\bm V}\bm\phi)_{\mathcal{T}_h}
    - (\bm\varepsilon_{\bm q}, \nabla \Pi_W \Psi)_{\mathcal{T}_h}
    - (\bm\varepsilon_{\bm q}, \nabla(\Psi - \Pi_{W} \Psi))_{\mathcal{T}_h} 
    + (\varepsilon_y, \alpha \Psi^z)_{\mathcal{T}_h}.
\end{align*}
Now, choosing $\bm r_1 = \bm\Pi_{\bm V} \bm \phi$ in \eqref{eq:energy_arg_a} 
and $w_1 = \Pi_W \Psi$ in \eqref{eq:energy_arg_b}, followed by simple 
algebraic arrangements, we deduce that
\begin{align}
\begin{split}
    \norm[0]{\varepsilon_y}_{\mathcal{T}_h}^2 
    = & - ( I_{\bm q}, \bm \Pi_{\bm V} \bm\phi)_{\mathcal{T}_h}
    - \langle \varepsilon_{\widehat{y}}, \bm\Pi_{\bm V} \bm\phi \cdot \bm n \rangle_{\partial\mathcal{T}_h}
    - \langle \bm\varepsilon_{\widehat{\bm q}} \cdot \bm n + \bm\beta \cdot \bm n \varepsilon_{\widehat y}, \Pi_W \Psi \rangle_{\partial\mathcal{T}_h}
    + (\alpha \varepsilon_z, \Pi_W \Psi)_{\mathcal{T}_h}\\
    & + (\alpha I_z, \Pi_W \Psi)_{\mathcal{T}_h}
    - (\varepsilon_y, \nabla \cdot (\bm\phi - \bm \Pi_{\bm V}\bm\phi))_{\mathcal{T}_h}
    - (\varepsilon_y, \bm\beta \cdot \nabla(\Psi - \Pi_W\Psi))_{\mathcal{T}_h}
    + (\bm\varepsilon_{\bm q}, \bm\phi - \bm\Pi_{\bm V}\bm\phi)_{\mathcal{T}_h}\\
    & - (\bm\varepsilon_{\bm q}, \nabla(\Psi - \Pi_W\Psi))_{\mathcal{T}_h}
    + (\varepsilon_y, \alpha \Psi^z)_{\mathcal{T}_h}\\
    = &-  (I_{\bm q}, \bm\Pi_{\bm V}\bm\phi)_{\mathcal{T}_h}
    + (\alpha e^z, \Pi_W \Psi)_{\mathcal{T}_h}
    + (\bm\varepsilon_{\bm q}, \bm\phi - \bm\Pi_{\bm V}\bm\phi)_{\mathcal{T}_h}
    + (\varepsilon_y, \alpha \Psi^z)_{\mathcal{T}_h}
    + \mathbb{S}_y,
    \end{split}
    \label{aux3}
\end{align}
where, for notation purposes only,
\begin{align*}
    \mathbb{S}_y
    := & - \langle \varepsilon_{\widehat{y}}, \bm\Pi_{\bm V} \bm\phi \cdot \bm n \rangle_{\partial \mathcal{T}_h}
    - \langle \bm\varepsilon_{\widehat{\bm q}} \cdot \bm n + \bm\beta \cdot \bm n \varepsilon_{\widehat y}, \Pi_W \Psi \rangle_{\partial \mathcal{T}_h}
    - (\varepsilon_y, \nabla \cdot (\bm\phi - \bm\Pi_{\bm V}\bm\phi))_{\mathcal{T}_h}
    - (\bm\varepsilon_{\bm q}, \nabla(\Psi - \Pi_W\Psi))_{\mathcal{T}_h}\\
    & - (\bm\beta \varepsilon_y, \nabla(\Psi - \Pi_W\Psi)).
\end{align*}
On the other hand, integrating by parts the above expression and using the 
HDG projection, particularly \eqref{eq:proj_hdg_a} and \eqref{eq:proj_hdg_b}, 
we deduce that
\begin{align*}
    \mathbb{S}_y 
    = & - \langle \varepsilon_{\widehat{y}} - \varepsilon_{y},(\bm\Pi_{\bm V} \bm\phi - \bm\phi) \cdot \bm n \rangle_{\partial\mathcal{T}_h}
    - \langle(\bm\varepsilon_{\widehat{\bm q}} - \bm\varepsilon_{\bm q}) \cdot \bm n, \Pi_W \Psi - \Psi\rangle_{\partial\mathcal{T}_h}
    - \langle \bm\beta \cdot \bm n \varepsilon_{\widehat y}, \Pi_W \Psi - \Psi\rangle_{\partial \mathcal{T}_h}\\
    & - \langle \bm\beta \cdot \bm n \varepsilon_y, \Psi - \Pi_W \Psi\rangle_{\partial\mathcal{T}_h}
    - \langle \varepsilon_{\widehat{y}}, \bm\phi \cdot \bm n \rangle_{\Gamma_h}
    - \langle \bm\varepsilon_{\widehat{\bm q}} \cdot \bm n + \bm\beta \cdot \bm n \varepsilon_{\widehat y}, \Psi\rangle_{\Gamma_h},
\end{align*}
where for the last two terms we have employed the fact that 
$\varepsilon_{\widehat{y}}$ is single-valued on $\mathcal{E}_h^{\circ}$ and 
\eqref{eq:energy_arg_e}. Moreover, by \eqref{eq:energy_arg_i} and suitably 
applying (\ref{eq:proj_hdg_c}), 
\begin{align*}
    \mathbb{S}_y 
    = & - \langle \varepsilon_{\widehat y} - \varepsilon_y, (\bm\Pi_{\bm V} \bm\phi - \bm\phi) \cdot \bm n + \bm\beta \cdot \bm n (P_M\Psi -\Psi) 
    + \tau_1 (\Psi - \Pi_W \Psi) \rangle_{\partial \mathcal{T}_h}
    - \langle \varepsilon_{\widehat y} - \varepsilon_y, \bm\beta \cdot \bm n (\Pi_W \Psi - P_M\Psi) \rangle_{\partial\mathcal{T}_h}\\
    &
    - \langle \varepsilon_{\widehat{y}}, \bm\phi \cdot \bm n \rangle_{\Gamma_h}
    - \langle \bm\varepsilon_{\widehat{\bm q}} \cdot \bm n + \bm\beta \cdot \bm n \varepsilon_{\widehat y}, \Psi \rangle_{\Gamma_h}\\
    = & - \langle \varepsilon_{\widehat y} - \varepsilon_y, \bm\beta \cdot \bm n (\Pi_W \Psi - P_M \Psi) \rangle_{\partial \mathcal{T}_h}
    - \langle \varepsilon_{\widehat{y}}, \bm\phi \cdot \bm n\rangle_{\Gamma_h}
    - \langle \bm\varepsilon_{\widehat{\bm q}} \cdot \bm n + \bm \beta \cdot \bm n \varepsilon_{\widehat y}, \Psi \rangle_{\Gamma_h},
\end{align*}
which together with \eqref{aux3} implies \eqref{eq:norm_Ehy_Ehz_a}.

Finally, by taking $\Theta_2 = \varepsilon_z$ in $\Omega_h$ and 
$\Theta_2 = 0$ in $\Omega \setminus \Omega_h$, setting $\bm r_2 
= \bm\Pi_{\bm V}\bm\phi^{\bm p}$ in (\ref{eq:energy_arg_c}) and $w_2 
= \Pi_W \Psi^z$ in (\ref{eq:energy_arg_d}) in (\ref{eq:dual_problem_e}), 
similar arguments yield to \eqref{eq:norm_Ehy_Ehz_b}. 
\end{proof}

The presence of the terms $- \langle\varepsilon_{\widehat y} - 
\varepsilon_y, \bm\beta \cdot \bm n (\Pi_W \Psi -
P_M\Psi) \rangle_{\partial \mathcal{T}_h}$ in $\mathbb{S}^y$ and 
$\langle \varepsilon_{\widehat{z}} - \varepsilon_{z}, \bm\beta \cdot
\bm n (\widetilde{\Pi}_W \Psi^z - P_M\Psi^z) \rangle_{\partial 
\mathcal{T}_h}$ in $\mathbb{S}^z$ are due the convection part of the 
differential equation. The remaining terms in $\mathbb{S}_y$ and 
$\mathbb{S}_z$ are due to the fact that $\Gamma_h$ does not fit $\Gamma$, 
otherwise both would vanish by \eqref{eq:dual_problem_c} and 
\eqref{eq:dual_problem_f}. Then, in the following lemma we rewrite 
$\mathbb{S}_y$ and $\mathbb{S}_z$ in order to quantify explicitly the 
influence of the mismatch between $\Gamma$ and $\Gamma_h$.

\begin{lemma}
\label{lemma:identity_Sy_Sz}

We have the following decomposition: $\mathbb{S}_y = \sum_{i=1}^{10} 
\mathbb{S}^i_y$ and $\mathbb{S}_z = \sum_{i=1}^{10} \mathbb{S}^i_z$, where

\begin{minipage}{0.45\textwidth}
\begin{align*}
    \mathbb{S}^1_{y} 
    & = - \langle \varepsilon_{\widehat y} - \varepsilon_y, \bm\beta \cdot \bm n (\Pi_W\Psi - \Psi) \rangle_{\partial \mathcal{T}_h},\\
    \mathbb{S}^2_{y} 
    & = - \langle \varepsilon_{\widehat y} - \varepsilon_y, \bm\beta \cdot \bm n (\Psi - P_M \Psi) \rangle_{\partial \mathcal{T}_h},\\
    \mathbb{S}^3_y 
    & = - \langle (\bm m \cdot \bm n l)^{-1} (\varphi_1 - \varphi_1^h), \Psi + \bm m \cdot \bm n l \partial_{\bm n} \Psi \rangle_{\Gamma_h} ,\\
    \mathbb{S}^4_y 
    & = - \langle \varphi_1 - \varphi_1^h, P_M \partial_{\bm n} \Psi - \partial_{\bm n} \Psi\rangle_{\Gamma_h},\\
    \mathbb{S}^5_y
    & = - \langle (\bm m \cdot \bm n)^{-1} \Lambda^{I_{\bm q}}, \Psi\rangle_{\Gamma_h},\\
    \mathbb{S}^6_y
    & = - \langle (\bm m \cdot \bm n)^{-1} \Lambda^{\bm\varepsilon_{\bm q}}, \Psi \rangle_{\Gamma_h},\\
    \mathbb{S}^7_y 
    & = - \langle (\bm m \cdot \bm n)^{-1} \tau_1 (\varepsilon_y - \varepsilon_{\widehat y}), \Psi\rangle_{\Gamma_h},\\
    \mathbb{S}^8_y
    & = - \langle \bm\beta \cdot \bm n (\varphi_1 - \varphi_1^h), \Psi \rangle_{\Gamma_h},\\
    \mathbb{S}^9_y
    & = \langle (\bm m \cdot \bm n)^{-1} I_{\bm q} \cdot \bm m, \Psi - P_M \Psi \rangle_{\Gamma_h},\\
    \mathbb{S}^{10}_y
    & = - \langle (\bm m \cdot \bm n)^{-1} \tau_1 I_y, P_M \Psi \rangle_{\Gamma_h}, \\
    \mathbb{S}^{11}_y
    & = - \langle (\bm m \cdot \bm n)^{-1} \bm \varepsilon_{\bm q} \cdot \bm t, \Psi \rangle_{\Gamma_h}, 
\end{align*}
\end{minipage}
 \begin{minipage}{0.45\textwidth}
\begin{align*}
    \mathbb{S}^1_{z} 
    & = \langle\varepsilon_{\widehat z} - \varepsilon_z, \bm\beta \cdot \bm n (\widetilde\Pi_W \Psi^z - \Psi^z) \rangle_{\partial\mathcal{T}_h},\\
    \mathbb{S}^2_{z} 
    & = \langle \varepsilon_{\widehat z} - \varepsilon_z, \bm\beta \cdot \bm n (\Psi^z - P_M \Psi^z) \rangle_{\partial \mathcal{T}_h},\\
    \mathbb{S}^3_z 
    & = - \langle (\bm m \cdot \bm n l)^{-1} (\varphi_2 - \varphi_2^h), \Psi^z + \bm m \cdot \bm n l \partial_{\bm n} \Psi^z\rangle_{\Gamma_h},\\
    \mathbb{S}^4_z 
    & = - \langle \varphi_2 - \varphi_2^h, P_M \partial_{\bm n} \Psi^z - \partial_{\bm n} \Psi^z \rangle_{\Gamma_h},\\
    \mathbb{S}^5_z
    & = - \langle (\bm m \cdot \bm n)^{-1} \Lambda^{I_{\bm p}}, \Psi^z \rangle_{\Gamma_h},\\
    \mathbb{S}^6_z
    & = - \langle (\bm m \cdot \bm n)^{-1} \Lambda^{\bm\varepsilon_{\bm p}}, \Psi^z \rangle_{\Gamma_h},\\
    \mathbb{S}^7_y
    & = - \langle (\bm m \cdot \bm n)^{-1} \tau_2 (\varepsilon_z - \varepsilon_{\widehat z}), \Psi^z \rangle_{\Gamma_h},\\
    \mathbb{S}^8_z
    & = \langle \bm\beta \cdot \bm n (\varphi_2 - \varphi_2^h), \Psi^z \rangle_{\Gamma_h},\\
    \mathbb{S}^9_z
    & = \langle (\bm m \cdot \bm n)^{-1} I_{\bm p} \cdot \bm m, \Psi^z - P_M \Psi^z \rangle_{\Gamma_h},\\
    \mathbb{S}^{10}_z
    & = - \langle (\bm m \cdot \bm n)^{-1} \tau_2 I_z, P_M \Psi^z \rangle_{\Gamma_h} \\
    \mathbb{S}^{11}_y
    & = - \langle (\bm m \cdot \bm n)^{-1} \bm \varepsilon_{\bm p} \cdot \bm t, \Psi^z \rangle_{\Gamma_h}, 
\end{align*}
\end{minipage} \\
\end{lemma}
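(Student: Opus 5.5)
The plan is to start from the expression for $\mathbb{S}_y$ given in Lemma \ref{lemma:norm_Ehy_Ehz} and treat its three pieces separately. The convective volume/face term $-\langle \varepsilon_{\widehat y}-\varepsilon_y,\bm\beta\cdot\bm n(\Pi_W\Psi-P_M\Psi)\rangle_{\partial\mathcal{T}_h}$ is split by adding and subtracting $\Psi$. This gives $\mathbb{S}^1_y+\mathbb{S}^2_y$ directly. The remaining work is the two boundary terms on $\Gamma_h$,
\begin{equation*}
 -\langle \varepsilon_{\widehat y},\bm\phi\cdot\bm n\rangle_{\Gamma_h}
 -\langle \bm\varepsilon_{\widehat{\bm q}}\cdot\bm n+\bm\beta\cdot\bm n\,\varepsilon_{\widehat y},\Psi\rangle_{\Gamma_h}.
\end{equation*}

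For the first boundary term, I will use $\bm\phi=\nabla\Psi$, so that $\bm\phi\cdot\bm n=\partial_{\bm n}\Psi$. Since $\varepsilon_{\widehat y}\in M_h$, \eqref{eq:energy_arg_g} lets me replace it by $\varphi_1-\varphi_1^h$ after inserting $P_M\partial_{\bm n}\Psi$:
\begin{equation*}
-\langle \varepsilon_{\widehat y},\partial_{\bm n}\Psi\rangle_{\Gamma_h}
=-\langle \varphi_1-\varphi_1^h,P_M\partial_{\bm n}\Psi\rangle_{\Gamma_h}
-\langle \varepsilon_{\widehat y},\partial_{\bm n}\Psi-P_M\partial_{\bm n}\Psi\rangle_{\Gamma_h}.
\end{equation*}
The last term vanishes by orthogonality of $P_M$. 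The first is rewritten as $\mathbb{S}^4_y-\langle\varphi_1-\varphi_1^h,\partial_{\bm n}\Psi\rangle_{\Gamma_h}$. In the same way, $\langle\bm\beta\cdot\bm n\,\varepsilon_{\widehat y},\Psi\rangle_{\Gamma_h}$ should become $\mathbb{S}^8_y$ plus a remainder. That remainder involves $\bm\beta\cdot\bm n(\Psi-P_M\Psi)$, and I expect it to be absorbed using the piecewise-constant assumption \ref{Assumption:B-1} together with a possible redistribution into $\mathbb{S}^{10}_y$.

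For the flux term, I will expand $\bm\varepsilon_{\widehat{\bm q}}\cdot\bm n=\bm\varepsilon_{\bm q}\cdot\bm n+\tau_1(\varepsilon_y-\varepsilon_{\widehat y})$ via \eqref{eq:energy_arg_i}. Then I substitute identity \eqref{eq:identity_ehq_ehp_a} from Lemma \ref{lemma:energy_identity} for $\bm\varepsilon_{\bm q}\cdot\bm n$ on $\Gamma_h$. This produces:
\begin{enumerate}[(i)]
\item $-\langle(\bm m\cdot\bm n\,l)^{-1}(\varphi_1-\varphi_1^h),\Psi\rangle_{\Gamma_h}$, which combined with $-\langle\varphi_1-\varphi_1^h,\partial_{\bm n}\Psi\rangle_{\Gamma_h}$ from the previous step gives exactly $\mathbb{S}^3_y$;
\item the $\Lambda$-terms, giving $\mathbb{S}^5_y$ and $\mathbb{S}^6_y$;
\item the $\bm t$-term, giving $\mathbb{S}^{11}_y$;
\item the term $\langle(\bm m\cdot\bm n)^{-1}I_{\bm q}\cdot\bm m,\Psi\rangle_{\Gamma_h}$;
\item the $\tau_1$-term.
\end{enumerate}

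\textbf{Main obstacle.} The delicate step is producing $\mathbb{S}^7_y$, $\mathbb{S}^9_y$ and $\mathbb{S}^{10}_y$ with the stated weights $(\bm m\cdot\bm n)^{-1}$. The $I_{\bm q}$-term must be reduced to pairing with $\Psi-P_M\Psi$. For this I will use the boundary equation \eqref{eq:proj_hdg_c} of the HDG projection tested with $\mu=P_M\Psi$ on boundary facets, which expresses $\langle I_{\bm q}\cdot\bm n+\bm\beta\cdot\bm n(y-P_My)+\tau_1 I_y,P_M\Psi\rangle_e=0$. This converts the $P_M\Psi$-part of the $I_{\bm q}$ contribution into the $\tau_1 I_y$ term $\mathbb{S}^{10}_y$. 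The $\bm\beta$ part either cancels against the leftover from the $\mathbb{S}^8_y$ step or vanishes by $L^2$-orthogonality, because $\bm\beta\cdot\bm n$ is handled as in \ref{Assumption:B-1}.

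Since the weight $(\bm m\cdot\bm n)^{-1}$ is not constant on a facet, this orthogonality is only exact when $\bm m\cdot\bm n$ is constant on $e$ (e.g. $\bm m$ piecewise constant). I will check carefully whether the identity is meant with that convention, and otherwise keep the commutator terms. If the latter happens, I will fold them into $\mathbb{S}^9_y$ and $\mathbb{S}^{10}_y$, since they are of the same form and are estimated identically later.

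The $\tau_1(\varepsilon_y-\varepsilon_{\widehat y})$ term is kept as $\mathbb{S}^7_y$. The decomposition of $\mathbb{S}_z$ follows verbatim, using $\widetilde\Pi_h$, \eqref{eq:proj_hdg_d_c}, \eqref{eq:energy_arg_h}, \eqref{eq:energy_arg_j} and \eqref{eq:identity_ehq_ehp_b}, with the sign changes coming from $-\bm\beta$.
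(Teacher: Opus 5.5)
\textbf{The route matches the paper's, but the deferred steps do not work.} You follow the paper's route: split the convective face term into $\mathbb{S}^1_y+\mathbb{S}^2_y$, use $\bm\phi=\nabla\Psi$ with \eqref{eq:energy_arg_g}, expand $\bm\varepsilon_{\widehat{\bm q}}\cdot\bm n$ via \eqref{eq:energy_arg_i} and \eqref{eq:identity_ehq_ehp_a}, and finish with \eqref{eq:proj_hdg_c} tested against $P_M\Psi$. Your derivation of $\mathbb{S}^1_y,\dots,\mathbb{S}^6_y$ is correct. The gap lies in the steps you defer, and none of your proposed repairs works.

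(a) On $\Gamma_h$ you have $\varepsilon_{\widehat y}=P_M(\varphi_1-\varphi_1^h)$, so $-\langle\bm\beta\cdot\bm n\,\varepsilon_{\widehat y},\Psi\rangle_{\Gamma_h}=\mathbb{S}^8_y+\langle\varphi_1-\varphi_1^h,\bm\beta\cdot\bm n\,\Psi-P_M(\bm\beta\cdot\bm n\,\Psi)\rangle_{\Gamma_h}$. Even for facet-wise constant $\bm\beta\cdot\bm n$, the remainder on each facet is $\bm\beta\cdot\bm n\,\langle\varphi_1-\varphi_1^h,\Psi-P_M\Psi\rangle_e$. This does not vanish, because $\varphi_1-\varphi_1^h\notin M_h$. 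Assumption \ref{Assumption:B-1} constrains $\tau_1$, not $\bm\beta\cdot\bm n$. The remainder cannot be moved into $\mathbb{S}^{10}_y$, which involves $I_y$. Nor can it cancel the term $-\langle\bm\beta\cdot\bm n(y-P_My),P_M\Psi\rangle_{\Gamma_h}$ produced by \eqref{eq:proj_hdg_c}, which involves a different quantity.

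(b) \eqref{eq:proj_hdg_c} controls $I_{\bm q}\cdot\bm n$, whereas \eqref{eq:identity_ehq_ehp_a} produces $(\bm m\cdot\bm n)^{-1}I_{\bm q}\cdot\bm m=I_{\bm q}\cdot\bm n+(\bm m\cdot\bm n)^{-1}I_{\bm q}\cdot\bm t$. Splitting off the normal part first removes your weight concern. However, it yields $-\langle\tau_1 I_y,P_M\Psi\rangle_{\Gamma_h}$ \emph{without} the factor $(\bm m\cdot\bm n)^{-1}$. It also leaves a tangential term $\langle(\bm m\cdot\bm n)^{-1}I_{\bm q}\cdot\bm t,P_M\Psi\rangle_{\Gamma_h}$ that nothing absorbs.

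(c) By \eqref{eq:energy_arg_i}, the stabilization contribution is $-\langle\tau_1(\varepsilon_y-\varepsilon_{\widehat y}),\Psi\rangle_{\Gamma_h}$, again with no $(\bm m\cdot\bm n)^{-1}$, so it is not $\mathbb{S}^7_y$. Also, the $\bm t$-term enters as $+\langle(\bm m\cdot\bm n)^{-1}\bm\varepsilon_{\bm q}\cdot\bm t,\Psi\rangle_{\Gamma_h}=-\mathbb{S}^{11}_y$.

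\textbf{The paper's proof asserts these same steps without justification.} There is no device you have missed.
\begin{itemize}
\item It writes $-\langle\bm\beta\cdot\bm n\,\varepsilon_{\widehat y},\Psi\rangle_{\Gamma_h}=-\langle\bm\beta\cdot\bm n(\varphi_1-\varphi_1^h),\Psi\rangle_{\Gamma_h}$ directly from \eqref{eq:energy_arg_g}, although $\bm\beta\cdot\bm n\,\Psi\notin M_h$.
\item It applies \eqref{eq:proj_hdg_c} to $I_{\bm q}\cdot\bm n$, dropping the $\bm\beta$-term, the weight $(\bm m\cdot\bm n)^{-1}$, and the $\bm m$ versus $\bm n$ discrepancy.
\item Its own displayed expansion of $\bm\varepsilon_{\widehat{\bm q}}\cdot\bm n$ places $\tau_1(\varepsilon_y-\varepsilon_{\widehat y})$ outside the weight.
\end{itemize}
As stated, the identity holds when $\bm\beta=\bm 0$ and $\bm m=\bm n$, but not in general.

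\textbf{What your argument does prove is a corrected decomposition.} Take $\mathbb{S}^7_y$ and $\mathbb{S}^{10}_y$ without the weight, flip the sign of $\mathbb{S}^{11}_y$, and add the three terms $\langle\varphi_1-\varphi_1^h,\bm\beta\cdot\bm n\,\Psi-P_M(\bm\beta\cdot\bm n\,\Psi)\rangle_{\Gamma_h}$, $-\langle\bm\beta\cdot\bm n(y-P_My),P_M\Psi\rangle_{\Gamma_h}$ and $\langle(\bm m\cdot\bm n)^{-1}I_{\bm q}\cdot\bm t,P_M\Psi\rangle_{\Gamma_h}$. These extra terms are harmless for Corollary \ref{lemma:ineq_Sy_Sz}:
\begin{itemize}
\item The first obeys the same bound as $\mathbb{S}^8_y$, namely $\lesssim (Rh)^{3/2}\norm[0]{\bm\beta}_{L^{\infty}(\Omega)}\norm[0]{\varphi_1-\varphi_1^h}_{\Gamma_h,l^{-1}}\Theta$.
\item The other two are bounded like $\mathbb{S}^{10}_y$, using $\norm[0]{P_M\Psi}_{\Gamma_h,(h^{\perp})^{-1}}\le\norm[0]{\Psi}_{\Gamma_h,(h^{\perp})^{-1}}\lesssim Rh^{1/2}\Theta$.
\end{itemize}
So state and prove that version rather than trying to absorb the remainders. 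The same remarks apply verbatim to $\mathbb{S}_z$.
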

\begin{proof}
We proceed analogously to the proof of \cite[Lemma 4.4]{henriquez2023control} 
(see also \cite[Lemma 5.4]{paper2}), but adding the treatment of the terms 
associated with the convection. Then, by \eqref{eq:energy_arg_i} and 
\eqref{eq:identity_ehq_ehp_a} we find
\begin{align*}
    \bm\varepsilon_{\widehat{\bm q}}\cdot\bm n
    & = \bm\varepsilon_{\bm q} \cdot \bm n + \tau_1 (\varepsilon_y - \varepsilon_{\widehat{y}})
    = (\bm m \cdot \bm n l)^{-1} (\varphi_1 - \varphi_1^h)
    + (\bm m \cdot \bm n)^{-1} \big( \Lambda^{I_{\bm q}}
    + \Lambda^{\bm\varepsilon_{\bm q}}
    - I_{\bm q}\cdot\bm n 
    - \bm \varepsilon_{\bm q} \cdot \bm t \big)
    + \tau_1 (\varepsilon_y - \varepsilon_{\widehat{y}}),
\end{align*}
then
\begin{align*}
    \mathbb{S}_y 
    = & - \langle \varepsilon_{\widehat{y}}, \bm\phi \cdot \bm n \rangle_{\Gamma_h} \\ 
    & - \langle (\bm m \cdot \bm n l)^{-1} (\varphi_1 - \varphi_1^h)
    + (\bm m \cdot \bm n)^{-1} \big( \Lambda^{I_{\bm q}}
    + \Lambda^{\bm\varepsilon_{\bm q}}
    - I_{\bm q} \cdot \bm n 
    - \bm \varepsilon_{\bm q} \cdot \bm t \big)
    + \tau_1 (\varepsilon_y - \varepsilon_{\widehat{y}})
    + \bm\beta \cdot \bm n \varepsilon_{\widehat y}, \Psi\rangle_{\Gamma_h}\\
    & - \langle \varepsilon_{\widehat y} - \varepsilon_y, \bm\beta \cdot \bm n (\Pi_W \Psi - P_M \Psi) \rangle_{\partial \mathcal{T}_h}.
\end{align*}
Now, bearing in mind that $\bm \phi = \nabla \Psi$ in \eqref{eq:dual_problem_a} 
and applying \eqref{eq:energy_arg_g} we deduce that 
\begin{align}\label{eq1:identity_Sy_Sz}
    -\langle \varepsilon_{\widehat y}, \bm \phi \cdot \bm n \rangle_{\Gamma_h}
    = - \langle \varphi_1 - \varphi_1^h, P_M \partial_{\bm n} \Psi \rangle_{\Gamma_h} \quad\text{and} \quad
    - \langle \bm \beta \cdot \bm n \varepsilon_{\widehat y}, \Psi \rangle_{\Gamma_h}
    = - \langle \bm \beta \cdot \bm n (\varphi_1 - \varphi_1^h), \Psi\rangle_{\Gamma_h}.
\end{align}
In turn, by \eqref{eq:proj_hdg_c}, we have that $\langle I_{\bm q} \cdot \bm n, \Psi\rangle_{\Gamma_h} = \langle I_{\bm q} \cdot \bm n, \Psi - P_M \Psi \rangle_{\Gamma_h} - \langle \tau_1 I_y, P_M \Psi \rangle_{\Gamma_h}$. Finally, by adding and subtracting $\Psi$ in the first argument of the last term of \eqref{eq1:identity_Sy_Sz}, and after a simple rearrangement of terms, we conclude the identity for $\mathbb S_y$. The decomposition for $\mathbb{S}_z$ can be obtained analogously. 
\end{proof}

\begin{corollary}
\label{lemma:ineq_Sy_Sz}
There holds that
\begin{align*}
    |\mathbb{S}_y|
    \lesssim & \Big(h^{3/2} + R^{1/2} h^{1/2} \Big)
    \left( \mathcal{E}_{\bm q, y} + \mathcal{E}_{\varphi_1}\right)^{1/2}
    \Theta + \widetilde{\mathbb T}_{\bm q,y}^{1/2} \Theta,
\end{align*}
where,
\begin{equation}\label{eq:T-tilde}
    \widetilde{\mathbb T}_{\bm q,y}
    := R^{3} h \norm[0]{\nabla I_{\bm q}}_{\Omega_h^c,(h^{\perp})^2}^2 
    + h^2 \norm[0]{I_{\bm q} \cdot \bm m}_{\Gamma_h, h^{\perp}}^2 
    + R^2 h \norm[0]{I_y }_{\Gamma_h, h^{\perp}}^2.
\end{equation}
A similar estimate holds true for $ |\mathbb{S}_z|$, where $(\bm 
p, z, \varphi_2)$ plays the role of $(\bm q, y, \varphi_1)$.
\end{corollary}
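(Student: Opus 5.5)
The bound will come from the decomposition of Lemma \ref{lemma:identity_Sy_Sz}, estimating each $\mathbb{S}^i_y$ separately with Cauchy--Schwarz in suitably weighted boundary norms. In each pairing, one factor is an error quantity controlled by $\mathcal{E}_{\bm q,y}$, $\mathcal{E}_{\varphi_1}$ or $\widetilde{\mathbb T}_{\bm q,y}$. The other factor is a dual quantity controlled by Lemma \ref{lemma:dual_estimates} together with the regularity \eqref{eq:eliptic_regularity}. Throughout, we use $\bm m\cdot\bm n\ge\beta_e$ to remove the factors $(\bm m\cdot\bm n)^{-1}$, and $l\simeq$ (up to the $\beta_e$ and $r_e$ constants) $h^\perp$ on each $e$, with $l\lesssim r_e h_e^\perp$, to convert between the weights $l$ and $h^\perp$.

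\textbf{Terms $\mathbb{S}^1_y,\mathbb{S}^2_y$ (convection).} We write them as $\|(\tau_1-\tfrac12\bm\beta\cdot\bm n)^{1/2}(\varepsilon_y-\varepsilon_{\widehat y})\|_{\partial\mathcal T_h}$ times $\|\bm\beta\|_{L^\infty}$ times $\|\Pi_W\Psi-\Psi\|_{\partial\mathcal T_h}$ or $\|\Psi-P_M\Psi\|_{\partial\mathcal T_h}$; here \ref{Assumption:B-4} supplies the positive weight. By the trace estimates \eqref{eq:proj-L2-faces-estimate} and the analogue for the HDG projection with $l=1$, $\|\Psi-\Pi_W\Psi\|_{\partial\mathcal T_h}+\|\Psi-P_M\Psi\|_{\partial\mathcal T_h}\lesssim h^{3/2}\|\Psi\|_{2}\lesssim h^{3/2}\Theta$. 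This produces the $h^{3/2}\mathcal{E}_{\bm q,y}^{1/2}\Theta$ contribution.

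\textbf{Boundary terms $\mathbb{S}^3_y,\dots,\mathbb{S}^{11}_y$.}
\begin{itemize}
\item For $\mathbb S^3_y$, we pair $\|\varphi_1-\varphi_1^h\|_{\Gamma_h,l^{-1}}$ with $\|\Psi+\bm m\cdot\bm n\,l\partial_{\bm n}\Psi\|_{\Gamma_h,l^{-1}}$. We write $\bm m\cdot\bm n\,l\partial_{\bm n}\Psi = l\partial_{\bm n}\Psi - (1-\bm m\cdot\bm n)l\partial_{\bm n}\Psi$. Then $\|\Psi+l\partial_{\bm n}\Psi\|_{\Gamma_h,l^{-1}}\le \max l\,\|\cdot\|_{\Gamma_h,l^{-3}}\lesssim R h\Theta$, and the correction $(1-\bm m\cdot\bm n)\lesssim|\bm t|^2$ is controlled by \ref{Assumption:t} together with a trace bound on $\nabla\Psi$ over the thin strip.
\item For $\mathbb S^4_y$, we use $\|\varphi_1-\varphi_1^h\|_{\Gamma_h,l^{-1}}\,\|\partial_{\bm n}\Psi-P_M\partial_{\bm n}\Psi\|_{\Gamma_h,l}\lesssim Rh\,\mathcal{E}_{\varphi_1}^{1/2}\Theta$.
\item For $\mathbb S^6_y$, Lemma \ref{lem:Lambdas}(b) gives $\|l^{1/2}\Lambda^{\bm\varepsilon_{\bm q}}\|_{\Gamma_h}\lesssim R^{3/2}\|\bm\varepsilon_{\bm q}\|_{\mathcal T_h}$. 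We pair this with $\|\Psi\|_{\Gamma_h,l^{-1}}\le(\max l)^{1/2}\|\Psi\|_{\Gamma_h,l^{-2}}\lesssim (Rh)^{1/2}\Theta$, using Lemma \ref{lemma:dual_estimates}.
\item $\mathbb S^{11}_y$ is treated the same way, with the discrete trace inequality and \ref{Assumption:t}.
\item For $\mathbb S^7_y$, we insert $l^{1/2}\tau_1^{1/2}$ weights and use \ref{Assumption:2}, so it is bounded by $\mathcal{E}_{\bm q,y}^{1/2}\,\|\Psi\|_{\Gamma_h,l^{-1}}$ times a factor $(Rh)^{1/2}$.
\item For $\mathbb S^8_y$, $\|\bm\beta\|_\infty\|\varphi_1-\varphi_1^h\|_{\Gamma_h,l^{-1}}\|\Psi\|_{\Gamma_h,l}$ gives a factor $(Rh)^{3/2}$.
\end{itemize}
These terms furnish the $R^{1/2}h^{1/2}$ and $h^{3/2}$ prefactors.

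\textbf{Terms involving projection errors only.}
\begin{itemize}
\item For $\mathbb S^5_y$, Lemma \ref{lem:Lambdas}(a) gives $\|l^{1/2}\Lambda^{I_{\bm q}}\|\lesssim R\|\nabla I_{\bm q}\|_{\Omega_h^c,(h^\perp)^2}$. Pairing with $\|\Psi\|_{\Gamma_h,l^{-1}}\lesssim(Rh)^{1/2}\Theta$ yields the first term of $\widetilde{\mathbb T}_{\bm q,y}^{1/2}$.
\item For $\mathbb S^9_y$, we use $\|I_{\bm q}\cdot\bm m\|_{\Gamma_h,h^\perp}\,\|\Psi-P_M\Psi\|_{\Gamma_h,(h^\perp)^{-1}}\lesssim h\,\|I_{\bm q}\cdot\bm m\|_{\Gamma_h,h^\perp}\Theta$.
\item For $\mathbb S^{10}_y$, we write $P_M\Psi=\Psi-(\Psi-P_M\Psi)$ and obtain $\|I_y\|_{\Gamma_h,h^\perp}$ times $\|\Psi\|_{\Gamma_h,(h^\perp)^{-1}}+\|\Psi-P_M\Psi\|_{\Gamma_h,(h^\perp)^{-1}}$. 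The first of these is $\lesssim R(Rh)^{1/2}\Theta$ via the $l^{-2}$ estimate, and the second is $\lesssim h\Theta$. Up to constants this gives the last term of $\widetilde{\mathbb T}$.
\end{itemize}

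Summing all contributions gives the claim; the estimate for $\mathbb S_z$ is identical, using \eqref{eq:B-4'}. The main obstacle is bookkeeping the powers of $R$ and $h$ when switching between the weights $l$, $h^\perp$ and $l^{-1}$, in particular for $\mathbb S^3_y$, where the non-normal transfer direction spoils the cancellation $\Psi+l\partial_{\bm n}\Psi$. There I will try splitting $\bm m\cdot\bm n = 1-(1-\bm m\cdot\bm n)$ as described above. If that fails, the fallback is to rewrite $\Psi$ via a Taylor expansion along $\bm m$ instead of $\bm n$: since $\Psi(\phi(\bm x))=0$, $\Psi+l\,\partial_{\bm m}\Psi$ is $O(l^2|D^2\Psi|)$, and $\partial_{\bm m}=\bm m\cdot\bm n\,\partial_{\bm n}+\bm t\cdot\nabla$, so the remaining tangential piece is bounded using \ref{Assumption:t}.
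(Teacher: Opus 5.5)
Your proposal is correct and follows essentially the same route as the paper: term-by-term Cauchy--Schwarz on the decomposition of Lemma \ref{lemma:identity_Sy_Sz}, using Lemmas \ref{lemma:dual_estimates} and \ref{lem:Lambdas}, the discrete trace inequality and $(\bm m\cdot\bm n)^{-1}\le\beta_e^{-1}$, with the same powers of $R$ and $h$ for each $\mathbb S^i_y$. Your explicit treatment of the factor $\bm m\cdot\bm n$ in $\mathbb S^3_y$, especially the Taylor expansion along $\bm m$, makes precise a step the paper only asserts by citation; one fix is needed in $\mathbb S^{10}_y$: do not split $P_M\Psi$, since $h^\perp_e$ is constant on $e$ and $\|P_M\Psi\|_e\le\|\Psi\|_e$ give $\|P_M\Psi\|_{\Gamma_h,(h^\perp)^{-1}}\lesssim R^{1/2}\|\Psi\|_{\Gamma_h,l^{-1}}\lesssim Rh^{1/2}\Theta$ (not $R(Rh)^{1/2}\Theta$), which is exactly the last term of $\widetilde{\mathbb T}_{\bm q,y}$, whereas your extra piece $h\|I_y\|_{\Gamma_h,h^\perp}\Theta$ is not controlled by $\widetilde{\mathbb T}_{\bm q,y}$ when $R\ll h^{1/2}$.
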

\begin{proof}
Lemmas \ref{lemma:dual_estimates} and \ref{lemma:identity_Sy_Sz} imply
\begin{align*}
    |\mathbb S_y^1| + |\mathbb S_y^2| + |\mathbb S_y^7|
    = &|\langle(\tau_1 - \dfrac{1}{2} \bm\beta \cdot \bm n)^{1/2}(\varepsilon_{y} - \varepsilon_{\widehat  y}), (\tau_1 - \dfrac{1}{2} \bm\beta \cdot \bm n)^{-1/2} \bm\beta \cdot \bm n (\Pi_W \Psi - \Psi) \rangle_{\partial \mathcal{T}_h}|\\
    & + |\langle(\tau_1 - \dfrac{1}{2} \bm\beta \cdot \bm n)^{1/2}(\varepsilon_{y} - \varepsilon_{\widehat y}), (\tau_1 - \dfrac{1}{2} \bm\beta \cdot \bm n)^{-1/2} \bm\beta \cdot \bm n (\Psi - P_M\Psi) \rangle_{\partial \mathcal{T}_h}|\\
    &
    + |\langle(\tau_1 - \dfrac{1}{2} \bm\beta \cdot \bm n)^{1/2}(\varepsilon_{y} - \varepsilon_{\widehat y}), (\tau_1 - \dfrac{1}{2} \bm\beta \cdot \bm n)^{-1/2} \tau_1 \Psi\rangle_{\Gamma_h}|.
\end{align*}
Applying the Cauchy-Schwarz and Young's inequalities, using the fact that $\bm\beta\in [W^{1,\infty}(\Omega)]^n$ and recalling \ref{Assumption:B-4}, we deduce that there exists $C_\tau > 0$ such that $(\tau_1 - \dfrac{1}{2} \bm \beta \cdot \bm n_e)^{-1/2} \leq C_\tau$ for all $e \in \mathcal{E}_h$. Thus, we have that
\begin{align*}
    |\mathbb S_y^1| + |\mathbb S_y^2| + |\mathbb S_y^7|
    \leq & \norm[0]{(\tau_1 - \dfrac{1}{2} \bm\beta \cdot \bm n)^{1/2}(\varepsilon_{y} - \varepsilon_{\widehat  y})}_{\partial \mathcal{T}_h} C_{\tau} \Big(\norm[0]{\bm\beta}_{L^{\infty}(\Omega)} \norm[0]{\Pi_W \Psi - \Psi}_{\partial \mathcal{T}_h}\\
    & + \norm[0]{\bm\beta}_{L^{\infty}(\Omega)} \norm[0]{\Psi - P_M\Psi}_{\partial \mathcal{T}_h} 
    + (\bm m \cdot \bm n)^{-1} \tau_1 R h \norm[0]{\Psi}_{\Gamma_h,l^{-2}}\Big).
\end{align*}
Now, bearing in mind that $\Psi \in H^2(\Omega)$ and applying \eqref{eq:hdg_proj_err_b}, \eqref{eq:proj-L2-faces-estimate}, Lemma \ref{lemma:dual_estimates} and the regularity estimate \eqref{eq:eliptic_regularity}, we obtain
\begin{equation*}
    |\mathbb S_y^1| + |\mathbb S_y^2| + |\mathbb S_y^7|
    \lesssim \norm[0]{(\tau_1 - \dfrac{1}{2} \bm\beta \cdot \bm n)^{1/2}(\varepsilon_{y} - \varepsilon_{\widehat  y})}_{\partial \mathcal{T}_h} \Big(h^{3/2} + R h\Big) \Theta,
\end{equation*}
where $R$ appears because, for each $e \in \Gamma_h$, $\displaystyle l(\bm x)\,\leq\, h_e^{\perp}\,r_e $. On the other hand, from the proof of \cite[Lemma 4.4]{henriquez2023control} and and considering that $(\bm m \cdot \bm n_e)^{-1} \leq \beta_e^{-1}$ for all $e \in \mathcal{E}_h$, we obtain the following estimates
\begin{align*}
    |\mathbb{S}_y^3|
    & \lesssim R^{1/2} h^{1/2} \norm[0]{\varphi_1 - \varphi_1^h}_{\Gamma_h, l^{-1}} \Theta, 
    & |\mathbb{S}_y^4|
    & \lesssim R h \norm[0]{\varphi_1 - \varphi_1^h}_{\Gamma_h,l^{-1}} \Theta, \\
    |\mathbb{S}_y^5|
    & \lesssim R^{3/2} h^{1/2} \norm[0]{\nabla I_{\bm q}}_{\Omega_h^c, (h^{\perp})^2} \Theta,
    &|\mathbb{S}_y^6|
    & \lesssim R^{2} h^{1/2} \norm[0]{\bm \varepsilon_{\bm q}}_{\mathcal{T}_h} \Theta, \\
    |\mathbb{S}_y^{8}|
    & \lesssim R^{3/2} h^{3/2} \norm[0]{\bm \beta}_{L^{\infty}(\Omega)} \norm[0]{\varphi_1 - \varphi_1^h}_{\Gamma_h, l^{-1}} \Theta,
    & |\mathbb{S}_y^{9}|
    & \lesssim h \norm[0]{I_{\bm q} \cdot \bm m}_{\Gamma_h, h^{\perp}} \Theta, \\
    |\mathbb{S}_y^{10}|
    & \lesssim R h^{1/2} \norm[0]{I_y}_{\Gamma_h, h^{\perp}} \Theta,
    & |\mathbb{S}_y^{11}|
    & \lesssim R h^{1/2} \norm[0]{\bm t}_{L^{\infty}(\Gamma_h)} \norm[0]{\bm \varepsilon_{\bm q}}_{\mathcal{T}_h} \Theta.
\end{align*}
 The result follows from the definition of $ \left(\mathcal{E}_{\bm p, y} +\mathcal{E}_{\varphi_1}\right)^{1/2}$ (cf. \eqref{def:mathcal_Epy_Epz} and \eqref{def:mathcal_Evarphi1,Evarphi2}). The estimate for $|\mathbb S_z|$ can be deduced following the same steps, but considering Assumption \ref{Assumption:B-2} and asking for $C_{\tau}$ to satisfy $(\tau_2 + \dfrac{1}{2} \bm\beta \cdot \bm n)^{-1/2} \leq C_\tau$.
\end{proof}

We are now in position to present the main results of our work. The first one controls the $L^2$-norm of the projection of the error associated to the scalar unknowns, whereas the second one provides the corresponding estimates for the mixed unknowns of the state and adjoint approximations.

\begin{theorem}
\label{lemma:EyEz_estimates}
There exists $h_0 > 0$, such that for all $h < h_0$, we have that
\begin{align}
\label{lemma:EyEz_estimatesA}
\begin{split}
    \norm[0]{\varepsilon_{y}}_{\mathcal{T}_h} + \norm[0]{\varepsilon_{z}}_{\mathcal{T}_h}
    \lesssim & (h + 1) (\norm[0]{I_{\bm q}}_{\mathcal{T}_h} 
    + \norm[0]{I_{\bm p}}_{\mathcal{T}_h})
    + \norm[0]{I_y}_{\mathcal{T}_h}
    + \norm[0]{I_z}_{\mathcal{T}_h}
    + \Big(h + R^2 h^{1/2} \Big) \Big(\mathbb T_{\bm q,y}^{1/2}
    + \mathbb T_{\bm p,z}^{1/2}\Big)\\
    &
    + \widetilde{\mathbb T}_{\bm q,y}^{1/2}
    + \widetilde{\mathbb T}_{\bm p,z}^{1/2}.
\end{split}
\end{align}
\end{theorem}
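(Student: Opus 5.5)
The plan is to start from the two duality identities of Lemma~\ref{lemma:norm_Ehy_Ehz}, with $\Theta_1=\varepsilon_y$ and $\Theta_2=\varepsilon_z$ extended by zero to $\Omega\setminus\Omega_h$. Then $\Theta=\norm[0]{\varepsilon_y}_{\mathcal T_h}+\norm[0]{\varepsilon_z}_{\mathcal T_h}$. The key structural step is to multiply \eqref{eq:norm_Ehy_Ehz_b} by $\alpha$ and add it to \eqref{eq:norm_Ehy_Ehz_a}. By \eqref{eq:dual_problem_g}, $\alpha\Psi^z=\psi$, and the terms $(\varepsilon_y,\alpha\Psi^z)_{\mathcal T_h}$ and $-\alpha(\varepsilon_z,\Psi)_{\mathcal T_h}$ combine with the cross terms. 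Next, write $e^z=I_z+\varepsilon_z$ and $e^y=I_y+\varepsilon_y$, and replace $\Pi_W\Psi$ by $\Psi-(\Psi-\Pi_W\Psi)$ and $\widetilde\Pi_W\Psi^z$ by $\Psi^z-(\Psi^z-\widetilde\Pi_W\Psi^z)$. The pieces $\alpha(\varepsilon_z,\Psi)-\alpha(\varepsilon_z,\Psi)$ then cancel. What remains is $(\varepsilon_y,\alpha\Psi^z)-\alpha(\varepsilon_y,\Psi^z)$, which cancels as well. This is the usual control-problem trick of \cite[Lemma 4.7]{ZHU20162}, and it is exactly why the coupled dual system \eqref{eq:dual_problem} was chosen. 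After the cancellation, the surviving terms are of four kinds: projection residuals $\alpha(I_z,\Pi_W\Psi)$ and $\alpha(I_y,\widetilde\Pi_W\Psi^z)$; cross terms involving $\varepsilon_z$ or $\varepsilon_y$ against $\Psi-\Pi_W\Psi$ (of size $h^2\Theta$ times the $\varepsilon$'s); the flux terms; and $\mathbb S_y+\alpha\mathbb S_z$.

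I would then bound each term using \eqref{eq:eliptic_regularity}. First, $|(I_{\bm q},\bm\Pi_{\bm V}\bm\phi)|\le\norm[0]{I_{\bm q}}(\norm[0]{\bm\phi}+\norm[0]{\bm\phi-\bm\Pi_{\bm V}\bm\phi})\lesssim(1+h)\norm[0]{I_{\bm q}}\Theta$. Second, $|(\bm\varepsilon_{\bm q},\bm\phi-\bm\Pi_{\bm V}\bm\phi)|\lesssim h\,\mathcal E^{1/2}\Theta$, using \eqref{eq:hdg_proj_err_a} with $l=0$. Third, $|(I_z,\Pi_W\Psi)|\lesssim\norm[0]{I_z}\Theta$. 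Fourth, the leftover terms $\alpha(\varepsilon_z,\Psi-\Pi_W\Psi)$ are $\lesssim h^2\Theta^2$. The $\mathbb S$-terms are handled by Corollary~\ref{lemma:ineq_Sy_Sz}. Collecting everything gives
\begin{align*}
\Theta^2\lesssim{}&\big[(1+h)(\norm[0]{I_{\bm q}}+\norm[0]{I_{\bm p}})+\norm[0]{I_y}+\norm[0]{I_z}+\widetilde{\mathbb T}_{\bm q,y}^{1/2}+\widetilde{\mathbb T}_{\bm p,z}^{1/2}\big]\Theta\\
&+\big(h+h^{3/2}+R^{1/2}h^{1/2}\big)(\mathcal E+\mathcal E_\varphi)^{1/2}\Theta+h^2\Theta^2 .
\end{align*}

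Dividing by $\Theta$, I would then insert Corollary~\ref{cor:energy_arg_ineq}:
\[
(\mathcal E+\mathcal E_\varphi)^{1/2}\lesssim\mathbb T_{\bm q,y}^{1/2}+\mathbb T_{\bm p,z}^{1/2}+(1+\norm[0]{\bm\beta}_{L^\infty}R)^{1/2}\Theta .
\]
This produces the $\mathbb T$-terms in \eqref{lemma:EyEz_estimatesA}, with prefactor $h+R^{1/2}h^{1/2}$, which is dominated by the stated $h+R^2h^{1/2}$ up to constants once the $R^2$ factor from $\mathbb S^6$ is kept. It also produces a term $(h+R^{1/2}h^{1/2})(1+R)^{1/2}\Theta$ on the right-hand side.

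The main obstacle is absorbing the $\Theta$-terms that reappear on the right: $h^2\Theta$, and above all $(h+R^{1/2}h^{1/2})(1+R)^{1/2}\Theta$ coming through the energy estimate, together with the $R^2h^{1/2}\norm[0]{\bm\varepsilon_{\bm q}}$ contribution from $\mathbb S^6_y$. Since $R$ stays bounded by the closeness assumption \ref{Assumption:1}, all of these carry a factor $Ch^{1/2}$ with $C$ independent of $h$. Choosing $h_0$ so that $Ch_0^{1/2}\le 1/2$ lets me move them to the left-hand side, which yields \eqref{lemma:EyEz_estimatesA}. I expect the careful bookkeeping to lie in checking that every $\Theta$-dependent contribution really has a positive power of $h$. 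In particular, I must confirm that the $\mathbb S^3,\mathbb S^6,\mathbb S^{11}$ bounds are paired with $\mathcal E^{1/2}$ rather than with a bare $\Theta$.
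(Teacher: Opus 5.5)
Your proposal is essentially the paper's proof. Multiplying the second duality identity by $\alpha=\gamma^{-1}$ is equivalent to the paper's multiplying the first by $\gamma$, and the remaining steps match: the coupling terms reduce to $I_z$, $I_y$ residuals plus $\varepsilon$-terms against $\Psi-\Pi_W\Psi$ and $\Psi^z-\widetilde\Pi_W\Psi^z$; Corollary~\ref{lemma:ineq_Sy_Sz} bounds $\mathbb S_y,\mathbb S_z$; Corollary~\ref{cor:energy_arg_ineq} is inserted; and the $(h+R^{1/2}h^{1/2})(1+R)^{1/2}\Theta$ terms are absorbed for $h<h_0$ because $R$ is bounded.

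Two small corrections. First, $\bm\phi$ is only in $\bm H^1(\Omega)$, so \eqref{eq:hdg_proj_err_b} with $l=0$ gives $\norm[0]{\Psi-\Pi_W\Psi}_{\mathcal T_h}\lesssim h\Theta$, not $h^2\Theta$; this is still absorbable. Second, since $R\lesssim 1$ one has $R^2\lesssim R^{1/2}$, not the reverse. Your argument therefore yields the prefactor $h+R^{1/2}h^{1/2}$ in front of $\mathbb T_{\bm q,y}^{1/2}+\mathbb T_{\bm p,z}^{1/2}$, as does the paper's, whose final display also carries $h+R^{1/2}h^{1/2}$. That agrees with the stated $h+R^2h^{1/2}$ only when $R\lesssim h$ or $R\sim 1$.
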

\begin{proof}
Adding \eqref{eq:norm_Ehy_Ehz_a} and \eqref{eq:norm_Ehy_Ehz_b}, we get
\begin{align*}
    \gamma \norm[0]{\varepsilon_y}_{\mathcal{T}_h}^2 + \norm[0]{\varepsilon_z}_{\mathcal{T}_h}^2
    = & - \gamma \left(I_{\bm q}, \bm\Pi_{\bm V} \bm\phi \right)_{\mathcal{T}_h}
    + \gamma \left(\bm\varepsilon_{\bm q}, \bm\phi - \bm\Pi_{\bm V} \bm\phi \right)_{\mathcal{T}_h}
    - (I_{\bm p}, \widetilde{\bm \Pi}_{\bm V} \bm\phi^{\bm p})_{\mathcal{T}_h}
    + (\bm \varepsilon_{\bm p}, \bm \phi^{\bm p} - \widetilde{\bm \Pi}_{\bm V} \bm \phi^{\bm p})_{\mathcal{T}_h}\\
    & + (e^{z}, \Pi_{W}\Psi)_{\mathcal{T}_h}
    + (\varepsilon_y, \Psi^z)_{\mathcal{T}_h}
    - (e^{y}, \widetilde{\Pi}_W \Psi^z)_{\mathcal{T}_h}
    - (\varepsilon_z, \Psi)_{\mathcal{T}_h}
    + \gamma \mathbb{S}^y
    + \mathbb{S}^z.
\end{align*}
In turn, applying the estimates of the HDG projections \eqref{eq:hdg_proj_err_a} and \eqref{eq:hdg_proj_err_d_a}, rearranging terms and bearing in mind the definition of $\mathcal E$ (cf. \eqref{def:mathcal_Epy_Epz}), we deduce
\begin{align*}
    - \gamma (I_{\bm q}, \bm\Pi_{\bm V} \bm\phi)_{\mathcal{T}_h}
    & \lesssim \norm[0]{I_{\bm q}}_{\mathcal{T}_h} (h + 1) \Theta,
    & - (I_{\bm p}, \widetilde{\bm\Pi}_{\bm V} \bm \phi^{\bm p})_{\mathcal{T}_h}
    & \lesssim \norm[0]{I_{\bm p}}_{\mathcal{T}_h} (h + 1) \Theta,\\ 
    \gamma (\bm\varepsilon_{\bm q}, \bm\phi - \bm\Pi_{\bm V} \bm\phi)_{\mathcal{T}_h}
    & \lesssim (\mathcal{E}_{\bm q,y} 
    + \mathcal{E}_{\varphi_1})^{1/2} h \Theta,
    & (\bm\varepsilon_{\bm p}, \bm\phi^{\bm p} - \widetilde{\bm \Pi}_{\bm V} \bm\phi^{\bm p})_{\mathcal{T}_h}
    & \lesssim (\mathcal{E}_{\bm p,z}
    + \mathcal{E}_{\varphi_2})^{1/2} h \Theta.
\end{align*}
In addition, by algebraic manipulations, \eqref{eq:hdg_proj_err_a}, \eqref{eq:hdg_proj_err_d_a}, and the regularity estimates \eqref{eq:eliptic_regularity}, we find
\begin{equation*}
    (e^z,\Pi_W\Psi)_{\mathcal{T}_h}
    + (\varepsilon_y, \Psi^z)_{\mathcal{T}_h}
    - (e^y, \widetilde{\Pi}_W\Psi^z)_{\mathcal{T}_h}
    - (\varepsilon_z, \Psi)_{\mathcal{T}_h} 
    \lesssim \Big(h \norm[0]{\varepsilon_y}_{\mathcal{T}_h}
    + \norm[0]{I_y}_{\mathcal{T}_h}
    + h \norm[0]{\varepsilon_z}_{\mathcal{T}_h} 
    + \norm[0]{I_z}_{\mathcal{T}_h}\Big) \Theta.
\end{equation*}
Now, setting $\Theta_1 = \varepsilon_y$ in $\Omega_h$, $\Theta_1 = 0$ in $\Omega \backslash \Omega_h$ in \eqref{eq:dual_problem_b} and $\Theta_2 = \varepsilon_z$ in $\Omega_h$, $\Theta_2 = 0$ in $\Omega \backslash \Omega_h$ in \eqref{eq:dual_problem_e} and applying Corollary \ref{lemma:ineq_Sy_Sz}, we arrive to 
\begin{align*}
    \norm[0]{\varepsilon_y}_{\mathcal{T}_h}
    + \norm[0]{\varepsilon_z}_{\mathcal{T}_h}
    \lesssim & (h + 1) (\norm[0]{I_{\bm q}}_{\mathcal{T}_h}
    + \norm[0]{I_{\bm p}}_{\mathcal{T}_h})
    + \norm[0]{I_{y}}_{\mathcal{T}_h}
    + \norm[0]{I_{z}}_{\mathcal{T}_h}
    + (h + R^{1/2} h^{1/2}) (\mathcal{E} + \mathcal{E}_{\varphi})^{1/2}
    + \Tilde{\mathbb{T}}_{q, y}^{1/2}
    + \Tilde{\mathbb{T}}_{p, z}^{1/2}.
\end{align*}
Then, by Corollary \ref{cor:energy_arg_ineq} and making algebraic rearrangements, we have that
\begin{align*}
    \norm[0]{\varepsilon_y}_{\mathcal{T}_h}
    + \norm[0]{\varepsilon_z}_{\mathcal{T}_h}
    \lesssim & (h + 1) (\norm[0]{I_{\bm q}}_{\mathcal{T}_h}
    + \norm[0]{I_{\bm p}}_{\mathcal{T}_h})
    + \norm[0]{I_y}_{\mathcal{T}_h}
    + \norm[0]{I_z}_{\mathcal{T}_h}
    + \Big(h + R^{1/2} h^{1/2}\Big) \Big(\mathbb T_{\bm q,y}^{1/2} 
    + \mathbb T_{\bm p,z}^{1/2}\Big)\\
    & + \Big(h + R^{1/2} h^{1/2}\Big)(\norm[0]{\varepsilon_y}_{\mathcal{T}_h} 
    + \norm[0]{\varepsilon_z}_{\mathcal{T}_h})
    + \widetilde{\mathbb T}_{\bm q,y}^{1/2}
    + \widetilde{\mathbb T}_{\bm p,z}^{1/2}.   
\end{align*}
Therefore, there exists $h_0 > 0$, such that, for all $h < h_0$ the estimate \eqref{lemma:EyEz_estimatesA} holds.
\end{proof}

\begin{corollary}\label{corollary:Ehv_bound_h^k} Let us assume $\bm q$ and $\bm p$ in $\bm H^{k+1}(\Omega)$; and $y$ and $z$ in $H^{k+1}(\Omega)$. Theremexists $h_0$ such that, for all   $h < h_0$, there holds
\begin{subequations}
\begin{align}  
    \norm[0]{y - y_h}_{\mathcal{T}_h}
    + \norm[0]{z - z_h}_{\mathcal{T}_h}
    & \lesssim h^{k+1} \left( |\bm q|_{k+1,\Omega} 
    + |\bm p|_{k+1,\Omega} 
    + |y|_{k+1,\Omega}
    + |z|_{k+1,\Omega}\right) \label{corA}
\end{align}
and
\begin{align}
    \norm[0]{\bm q - \bm q_h}_{\mathcal{T}_h} 
    + \norm[0]{\bm p - \bm p_h}_{\mathcal{T}_h}
    & \lesssim h^{k+1} \left(|\bm q|_{k+1,\Omega} 
    + |\bm p|_{k+1,\Omega} 
    + |y|_{k+1,\Omega}
    + |z|_{k+1,\Omega}\right).\label{corB}
\end{align}
\end{subequations}
\end{corollary}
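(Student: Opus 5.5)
The plan is to split each error as projection error plus projection of the error, for instance $y-y_h = I_y + \varepsilon_y$ and $\bm q-\bm q_h = I_{\bm q}+\bm\varepsilon_{\bm q}$. We then bound the projection errors using \eqref{eq:hdg_proj_err} (and its analogue for $\widetilde\Pi_h$), and control the $\varepsilon$ terms through Theorem \ref{lemma:EyEz_estimates} and Corollary \ref{cor:energy_arg_ineq}. With $l=k$, the interior terms give directly
\[
\norm[0]{I_{\bm q}}_{\mathcal T_h}+\norm[0]{I_{\bm p}}_{\mathcal T_h}+\norm[0]{I_y}_{\mathcal T_h}+\norm[0]{I_z}_{\mathcal T_h}\lesssim h^{k+1}\left(|\bm q|_{k+1,\Omega}+|\bm p|_{k+1,\Omega}+|y|_{k+1,\Omega}+|z|_{k+1,\Omega}\right).
\]

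The main work is bounding the boundary and extrapolation quantities $\mathbb T_{\bm r,v}$ and $\widetilde{\mathbb T}_{\bm r,v}$ by $h^{2(k+1)}$ times these seminorms.

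\begin{itemize}
\item For $\norm[0]{I_{\bm r}\cdot\bm m}_{\Gamma_h,h^\perp}$ we use $|\bm m|=1$ together with \eqref{eq:Ipn_Irn_a}, which yields $h^{k+1}$. Since $r_e\le R$ is bounded under the closeness assumptions (\ref{Assumption:1}) and $\beta_e^{-1}$ is independent of $h$, the weighted term in $\mathbb T$ is of order $h^{2(k+1)}$.
\item For $R^2\norm[0]{\nabla I_{\bm r}}^2_{\Omega_h^c,(h^\perp)^2}$ we invoke \eqref{lemma:partial_Ip_Ir}. Strictly speaking it controls only the normal derivative of the normal component. The plan is to argue, as in \cite[Lemma 3.8]{paper2}, that the same argument gives the full gradient: apply the extrapolation bound (through $C_{ext}$) to the polynomial part, and use the smooth extension of $\bm q$ for the rest. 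This produces $R^{1/2}\norm[0]{I_{\bm r}}_{\mathcal T_h}+(1+R^{1/2})h^{k+1}|\bm r|_{k+1,\Omega}$, which is again $O(h^{k+1})$.
\item For $\norm[0]{I_y}_{\Gamma_h,h^\perp}$ in $\widetilde{\mathbb T}$ we use a scaled trace inequality: $h^\perp\lesssim h$ and $\norm[0]{I_y}_{\partial K}\lesssim h^{-1/2}\norm[0]{I_y}_K+h^{1/2}|I_y|_{1,K}$, so this term is of order $h^{k+1}$. The extra factors $h$ and $R^3h$ in $\widetilde{\mathbb T}$ only help.
\end{itemize}

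I expect this step, checking that every term in $\mathbb T,\widetilde{\mathbb T}$ really scales like $h^{k+1}$ with constants uniform in $R$, to be the main obstacle, especially the gradient term on $\Omega_h^c$.

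Inserting these bounds into Theorem \ref{lemma:EyEz_estimates}, for $h<h_0$, gives $\norm[0]{\varepsilon_y}_{\mathcal T_h}+\norm[0]{\varepsilon_z}_{\mathcal T_h}\lesssim h^{k+1}(\cdots)$, since the prefactors $(h+1)$ and $(h+R^2h^{1/2})$ are bounded. The triangle inequality then yields \eqref{corA}.

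For \eqref{corB}, Corollary \ref{cor:energy_arg_ineq} gives $\gamma\norm[0]{\bm\varepsilon_{\bm q}}^2_{\mathcal T_h}+\norm[0]{\bm\varepsilon_{\bm p}}^2_{\mathcal T_h}\le\mathcal E\lesssim \mathbb T_{\bm q,y}+\mathbb T_{\bm p,z}+(1+\norm[0]{\bm\beta}_{L^\infty(\Omega)}R)(\norm[0]{\varepsilon_y}^2_{\mathcal T_h}+\norm[0]{\varepsilon_z}^2_{\mathcal T_h})$. Each term on the right is now known to be $O(h^{2(k+1)})$. Since $\gamma>0$ is fixed, combining this with the triangle inequality and the bounds on $I_{\bm q}$ and $I_{\bm p}$ concludes the proof.
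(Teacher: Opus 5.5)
Your proposal is correct and follows essentially the same route as the paper. You split off the projection errors and bound every term of $\mathbb T_{\bm r,v}$ and $\widetilde{\mathbb T}_{\bm r,v}$ by $h^{k+1}$ using \eqref{eq:hdg_proj_err}, \eqref{eq:Ipn_Irn} and a scaled trace argument for $\|I_y\|_{\Gamma_h,h^\perp}$, then insert these bounds into Theorem \ref{lemma:EyEz_estimates} for \eqref{corA} and into Corollary \ref{cor:energy_arg_ineq} for \eqref{corB}. Your only deviation is that you re-derive a full-gradient version of \eqref{lemma:partial_Ip_Ir} (extrapolation plus an inverse estimate on the polynomial part, Bramble--Hilbert for $\bm q$ itself, which needs no extension since $K^e_{ext}\subset\Omega$), whereas the paper simply cites \eqref{eq:Ipn_Irn} together with the decomposition $\bm m=(\bm m\cdot\bm n)\bm n+\bm t$; your version is, if anything, the more careful treatment of that step.
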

\begin{proof}
First of all, we note that by a scaling argument and the properties of the HDG projection, we can show that 
\[\norm[0]{I_{W} y}_{\Gamma_h,h^{\perp}} + \norm[0]{I_W z}_{\Gamma_h,h^{\perp}} 
\lesssim  h^{k+1} \left(|y|_{k+1,\Omega} +|z|_{k+1,\Omega}\right).\]
Then, bearing in mind the definitions \eqref{defTrv} and \eqref{eq:T-tilde}, by the approximation properties of the HDG projections (cf. \eqref{eq:hdg_proj_err}, \eqref{eq:Ipn_Irn}), and the decomposition  $\bm m =(\bm m \cdot \bm n) \bm n+ \bm t$, we have that
\begin{align*}
    \mathbb{T}_{\bm q,y}^{1/2}
    + \widetilde{\mathbb{T}}_{\bm q,y}^{1/2}
    \lesssim h^{k+1} \left(|\bm q|_{k+1,\Omega}
    + | y|_{k+1,\Omega}\right)
\end{align*}
and a similar expression is obtained for $\mathbb{T}_{\bm p,z} $ and $\widetilde{\mathbb{T}}_{\bm p,z}$. Moreover, $\norm[0]{I_{y}}_{\mathcal{T}_h} + \norm[0]{I_z}_{\mathcal{T}_h} \lesssim  h^{k+1}\left(|y|_{k+1,\Omega} + |z|_{k+1,\Omega}\right)$. Thus, since $y - y_h = \varepsilon_y + I_y$ and $z - z_h = \varepsilon_z + I_z$, \eqref{corA} follows. Finally, \eqref{corB} can be deduced from Corollary \ref{cor:energy_arg_ineq} (cf. \eqref{eq:energy_arg_ineq}) by considering similar arguments.
\end{proof}

The estimates over the entire domain $\Omega$ can be obtained thanks to Corollary \ref{corollary:Ehv_bound_h^k} and \cite[Lemma 3.7]{paper2} , that is, 
\begin{equation*}
    \norm[0]{\bm q - \bm q_h}_{\Omega}
    + \norm[0]{\bm p - \bm p_h}_{\Omega} 
    + \norm[0]{y - y_h}_{\Omega}
    + \norm[0]{z - z_h}_{\Omega}
    \lesssim h^{k+1}.
\end{equation*}

We end this section by mentioning how to proof the well-posedness of the scheme (cf. Lemma \eqref{lemma:EU}).
\begin{proof}
Assume $f=0$ and $g=0$. By Fredholm alternative, it is enough to show that the solution to \eqref{equations:hdg} is $(\bm q_h, y_h, \widehat{y}_h, \bm p_h, z_h, \widehat{z}_h)=(\bm 0, 0, 0, \bm 0, 0, 0)$. To that end, since the left-hand side of  \eqref{equations:hdg}  is similar to that of \eqref{eq:energy_arg}, by proceeding exactly as in the proof of Corollary \ref{cor:energy_arg_ineq}, we can prove that there exists a positive constant $C$, independent of $h$, such that
\begin{align}
    E + E_\varphi
    \leq C \Big(1 + \norm[0]{\boldsymbol{\beta}}_{L^{\infty}(\Omega)} R \Big)
    ( \norm[0]{y_h}_{\mathcal{T}_h}^2 
    + \norm[0]{z_h}_{\mathcal{T}_h}^2),
\end{align}
where we recall the definition of $E$ and $E_\varphi$ in \eqref{def:mathcal_Epy_Epz_mathcal_Evarphi1,Evarphi2}. Now, by the same steps as in the proof of Theorem \eqref{lemma:EyEz_estimates}, we can show that there exists $h_0 > 0$, such that for all $h < h_0$, we have that $\norm[0]{y_h}_{\mathcal{T}_h} + \norm[0]{z_h}_{\mathcal{T}_h}   \lesssim 0$ and the result follows.
\end{proof}

\section{Numerical experiments}\label{ch:numerical_experiments}
In this section we present numerical experiments to validate the theoretical orders of convergence of the approximation provided by the HDG method in the two-dimensional case. For all the computations we consider the spaces specified in \eqref{eq:discrete_spaces} with $k\in \{0, 1, 2, 3\}$ and the exact solutions $y = \sin(\pi x)$ for the state and $z  = \sin(\pi x) \sin(\pi y)$ for the adjoint. We fix $\alpha = 1$, $\tau_1 = 1$ and $\tau_2$ according to assumption \ref{Assumption:B-2}. We present two different numerical examples:
\begin{enumerate}
    \item[] \label{ex1}{\bf Example 1:} We consider a circular domain  $\Omega :=  \{(x,y) \in \mathbb{R}^2 : x^2 + y^2 \leq 0.75\}$ and the vector field of the convective term as $\bm \beta = [1,1]$.
    \item[] {\bf Example 2:} We consider a kidney-shaped domain whose boundary satisfies the equation
    \[(2 [(x + 0.5)^2 + y^2] - x - 0.5)^2 - (x + 0.5)^2 + y^2] + 0.1 = 0.\]
    Additionally, we set the vector field of the convective term as $\bm \beta = [y, x]$.
\end{enumerate}

According to Corollary \ref{corollary:Ehv_bound_h^k}, the theoretical order of convergence for the $L^2$-norm of the errors in all the variables is $k + 1$, as long as Assumption \ref{Assumption:1} - \ref{Assumption:4} hold true. This is what we actually observe in the numerical experiments. More precisely, we have performed numerical simulations where $\Gamma_h$ is a piece-wise linear interpolation of  $\Gamma$ by a piece-wise. In this case, the distance between $\Gamma$ and $\Gamma_h$ is of order $h^2$, $R$ is proportional to $h$ and therefore the set of assumptions is valid for $h$ sufficiently small. The results (not reported here) showed the optimal convergence rate predicted by Corollary \ref{corollary:Ehv_bound_h^k}. 

On the other hand, we do report a more interesting and practical situation where the computational domain is constructed by ``embedding'' $\Omega$ in a background mesh and considering  $\Omega_h$ as the union of the elements lying completely inside of $\Omega$ as depicted in Figures \ref{fig:circle} and \ref{fig:kidney}. In this setting, $R$ is of order one and we cannot guaranty \ref{Assumption:1} holds. However, as we observe in Tables \ref{table:y,p,yhat}-\ref{table:z,r,zhat} (Example 1) and Tables \ref{table:y,p,yhat_kidney}-\ref{table:z,r,zhat_kidney} (Example 2), the order of convergence in all the variables is still $k+1$.

\begin{figure}[ht!]
\centering
\begin{minipage}{.5\textwidth}
  \centering
  \includegraphics[width=1\linewidth]{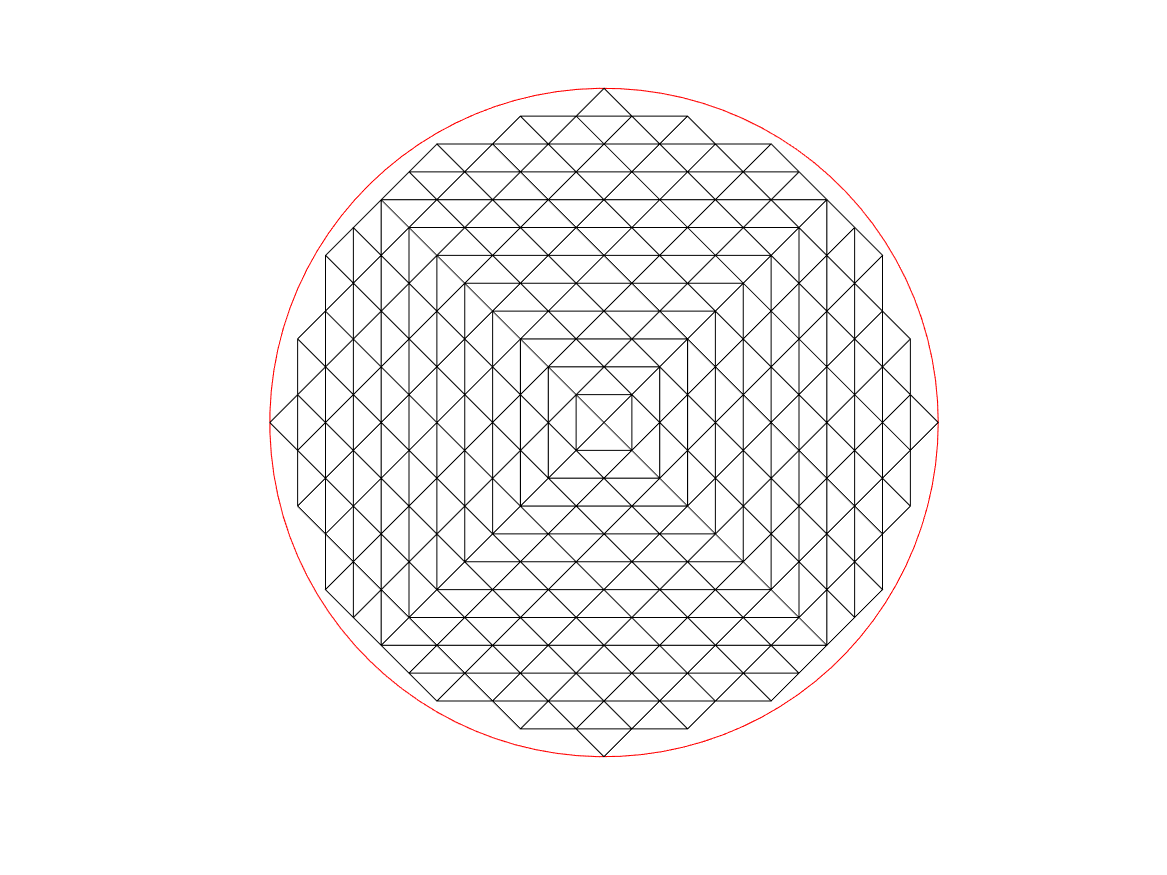}
  \captionof{figure}{Representation of a circle domain.}
  \label{fig:circle}
\end{minipage}%
\begin{minipage}{.5\textwidth}
  \centering
  \includegraphics[width=1\linewidth]{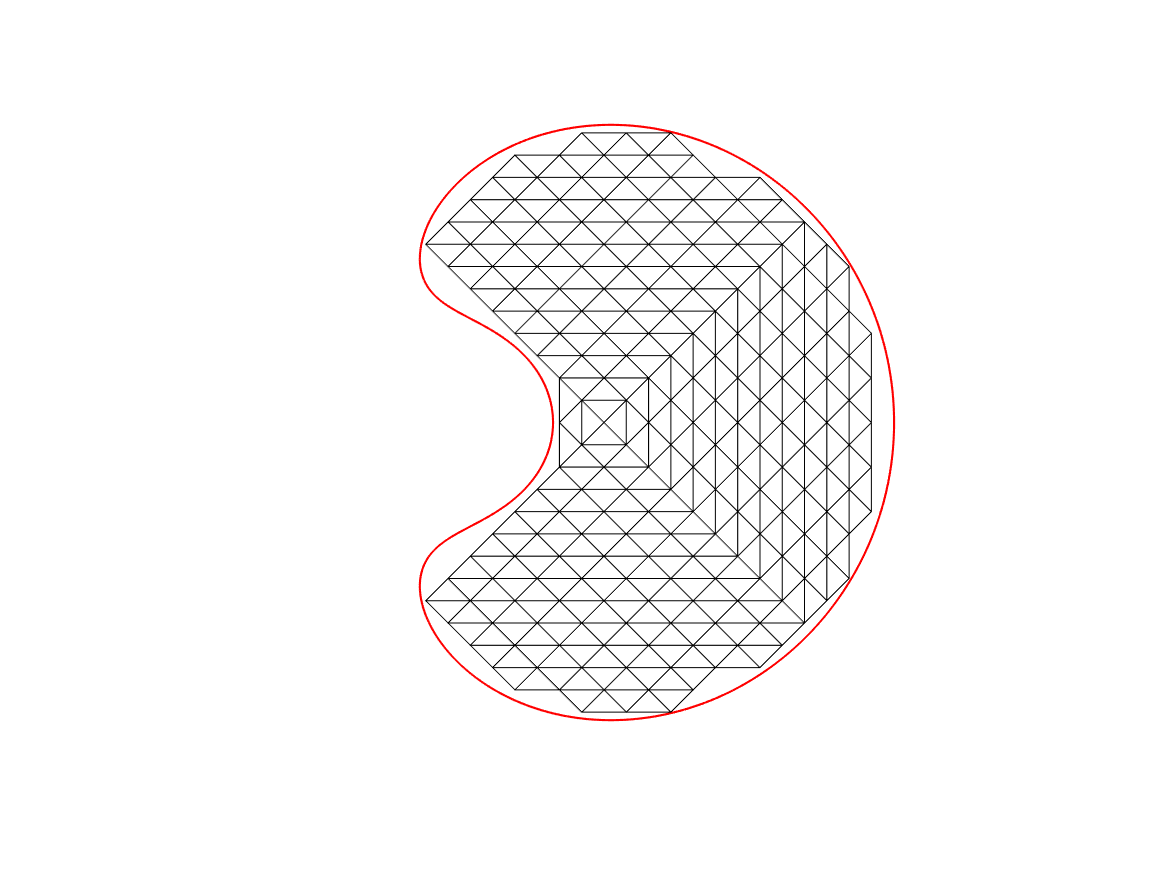}
  \captionof{figure}{Representation of a kidney-shaped domain.}
  \label{fig:kidney}
\end{minipage}
\end{figure}

\begin{table}[ht!]\renewcommand{\arraystretch}{1.1}\addtolength{\tabcolsep}{-4pt}
\centering
\begin{tabular}{c|c||c c||c c||c c}
$k$ & $N$ & $e_y$ & $\text{order}$ & $e_{\bm{q}}$ & $\text{order}$ & $e_{\widehat{y}}$ & $\text{order}$  \\\hline\hline
$0$ & $16$ & $8.26E-02$ & $-$ & $7.06E-01$ & $-$ & $1.16E-01$ & $-$ \\ 
  & $96$ & $7.04E-02$ & $0.18$ & $2.24E-01$ & $1.28$ & $3.97E-02$ & $1.60$ \\ 
  & $400$ & $3.60E-02$ & $0.94$ & $1.13E-01$ & $0.96$ & $1.95E-02$ & $1.00$ \\ 
  & $1680$ & $1.89E-02$ & $0.90$ & $5.79E-02$ & $0.93$ & $9.10E-03$ & $1.06$ \\ 
  & $7000$ & $1.02E-02$ & $0.86$ & $2.84E-02$ & $1.00$ & $3.86E-03$ & $1.20$ \\ 
  & $28504$ & $5.30E-03$ & $0.94$ & $1.41E-02$ & $0.99$ & $1.78E-03$ & $1.10$ \\ 
\hline \hline 
$1$ & $16$ & $2.56E-02$ & $-$ & $9.83E-02$ & $-$ & $1.18E-02$ & $-$ \\ 
  & $96$ & $5.65E-03$ & $1.68$ & $2.41E-02$ & $1.57$ & $1.05E-03$ & $2.70$ \\ 
  & $400$ & $1.50E-03$ & $1.86$ & $6.15E-03$ & $1.91$ & $2.05E-04$ & $2.28$ \\ 
  & $1680$ & $3.88E-04$ & $1.88$ & $1.47E-03$ & $1.99$ & $5.05E-05$ & $1.95$ \\ 
  & $7000$ & $9.44E-05$ & $1.98$ & $3.31E-04$ & $2.09$ & $5.10E-06$ & $3.21$ \\ 
  & $28504$ & $2.34E-06$ & $1.99$ & $7.95E-05$ & $2.03$ & $5.66E-07$ & $3.13$ \\ 
\hline \hline 
$2$ & $16$ & $9.34E-03$ & $-$ & $4.10E-02$ & $-$ & $9.38E-03$ & $-$ \\ 
  & $96$ & $4.62E-04$ & $3.36$ & $1.68E-03$ & $3.56$ & $2.14E-04$ & $4.22$ \\ 
  & $400$ & $6.48E-05$ & $2.76$ & $2.48E-04$ & $2.68$ & $3.71E-05$ & $2.45$ \\ 
  & $1680$ & $9.15E-06$ & $2.72$ & $5.06E-05$ & $2.21$ & $5.99E-06$ & $2.54$ \\ 
  & $7000$ & $8.82E-07$ & $3.28$ & $4.09E-06$ & $3.53$ & $3.34E-07$ & $4.05$ \\ 
  & $28504$ & $9.99E-08$ & $3.10$ & $3.58E-07$ & $3.47$ & $1.75E-07$ & $4.20$ \\ 
\hline \hline 
$3$ & $16$ & $7.15E-04$ & $-$ & $3.15E-03$ & $-$ & $6.61E-04$ & $-$ \\ 
  & $96$ & $2.06E-05$ & $3.96$ & $1.76E-04$ & $3.22$ & $1.29E-05$ & $4.39$ \\ 
  & $400$ & $1.59E-06$ & $3.59$ & $1.36E-05$ & $3.56$ & $1.03E-06$ & $3.54$ \\ 
  & $1680$ & $1.19E-07$ & $3.66$ & $9.12E-07$ & $3.76$ & $8.39E-08$ & $3.49$ \\ 
  & $7000$ & $4.99E-09$ & $4.46$ & $3.23E-08$ & $4.69$ & $1.86E-09$ & $5.33$ \\ 
  & $28504$ & $2.73E-10$ & $4.12$ & $1.44E-09$ & $4.44$ & $4.76E-11$ & $5.22$ \\ 
\end{tabular}
\caption{History of convergence history of the error in $y$, $\bm q$ and $\widehat{y}$ for Example 1.}
\label{table:y,p,yhat}
\end{table}

\begin{table}[ht!]\renewcommand{\arraystretch}{1.1}\addtolength{\tabcolsep}{-4pt}
\centering
\begin{tabular}{c|c||c c||c c||c c}
$k$ & $N$ & $e_z$ & $\text{order}$ & $e_{\boldsymbol{p}}$ & $\text{order}$ & $e_{\widehat{\boldsymbol{z}}}$ & $\text{order}$  \\\hline\hline
$0$ & $16$ & $1.43E-01$ & $-$ & $1.10E-00$ & $-$ & $1.65E-01$ & $-$ \\ 
  & $96$ & $1.16E-01$ & $0.02$ & $4.37E-01$ & $1.03$ & $4.41E-02$ & $1.47$ \\ 
  & $400$ & $5.72E-02$ & $0.99$ & $2.23E-01$ & $0.94$ & $2.21E-02$ & $0.97$ \\ 
  & $1680$ & $2.94E-02$ & $0.93$ & $1.11E-01$ & $0.97$ & $1.03E-02$ & $1.05$ \\ 
  & $7000$ & $1.62E-02$ & $0.84$ & $5.48E-02$ & $0.99$ & $4.31E-03$ & $1.22$ \\ 
  & $28504$ & $8.45E-03$ & $0.93$ & $2.73E-02$ & $0.98$ & $1.99E-03$ & $1.10$ \\ 
\hline \hline 
$1$ & $16$ & $4.67E-02$ & $-$ & $1.89E-01$ & $-$ & $2.18E-02$ & $-$ \\ 
  & $96$ & $1.30E-02$ & $1.42$ & $5.01E-02$ & $1.48$ & $3.56E-03$ & $2.04$ \\ 
  & $400$ & $3.36E-03$ & $1.90$ & $1.21E-02$ & $1.99$ & $7.72E-04$ & $2.13$ \\ 
  & $1680$ & $8.15E-04$ & $1.98$ & $2.63E-03$ & $2.13$ & $1.04E-04$ & $2.77$ \\ 
  & $7000$ & $1.98E-04$ & $1.99$ & $5.98E-04$ & $2.08$ & $9.71E-06$ & $3.33$ \\ 
  & $28504$ & $4.90E-05$ & $1.99$ & $1.45E-04$ & $2.02$ & $1.03E-06$ & $3.20$ \\ 
\hline \hline 
$2$ & $16$ & $7.26E-03$ & $-$ & $3.40E-02$ & $-$ & $4.52E-03$ & $-$ \\ 
  & $96$ & $8.27E-04$ & $2.42$ & $3.54E-03$ & $2.52$ & $2.89E-04$ & $3.07$ \\ 
  & $400$ & $1.40E-04$ & $2.49$ & $7.32E-04$ & $2.21$ & $8.66E-05$ & $1.69$ \\ 
  & $1680$ & $2.27E-05$ & $2.54$ & $1.22E-04$ & $2.49$ & $1.66E-05$ & $2.30$ \\ 
  & $7000$ & $1.93E-06$ & $3.45$ & $9.18E-06$ & $3.63$ & $8.79E-07$ & $4.12$ \\ 
  & $28504$ & $2.04E-07$ & $3.20$ & $8.07E-07$ & $3.46$ & $4.37E-08$ & $4.27$ \\ 
\hline \hline 
$3$ & $16$ & $9.36E-04$ & $-$ & $4.74E-03$ & $-$ & $6.50E-04$ & $-$ \\ 
  & $96$ & $8.27E-05$ & $2.71$ & $7.13E-04$ & $2.11$ & $6.83E-05$ & $2.51$ \\ 
  & $400$ & $6.41E-06$ & $3.59$ & $4.99E-05$ & $3.73$ & $5.22E-06$ & $3.60$ \\ 
  & $1680$ & $2.98E-07$ & $4.28$ & $1.89E-06$ & $4.56$ & $2.02E-07$ & $4.53$ \\ 
  & $7000$ & $1.21E-08$ & $4.49$ & $6.24E-08$ & $4.78$ & $3.51E-09$ & $5.68$ \\ 
  & $28504$ & $6.98E-10$ & $4.07$ & $2.89E-09$ & $4.39$ & $7.93E-11$ & $5.40$ \\ 
\end{tabular}
\caption{History of convergence history of the error in $z$, $\bm p$ and $\widehat{z}$ for Example 1.}
\label{table:z,r,zhat}
\end{table}

\begin{table}[ht!]\renewcommand{\arraystretch}{1.1}\addtolength{\tabcolsep}{-4pt}
\centering
\begin{tabular}{c|c||c c||c c||c c}
$k$ & $N$ & $e_y$ & $\text{order}$ & $e_{\boldsymbol{q}}$ & $\text{order}$ & $e_{\widehat{\boldsymbol{y}}}$ & $\text{order}$  \\\hline\hline
$0$ & $22$ & $1.85E-01$ & $-$ & $4.05E-01$ & $-$ & $8.30E-02$ & $-$ \\ 
  & $85$ & $9.17E-02$ & $1.04$ & $2.19E-01$ & $0.91$ & $4.73E-02$ & $0.83$ \\ 
  & $370$ & $5.10E-02$ & $0.80$ & $1.03E-01$ & $1.02$ & $1.78E-02$ & $1.32$ \\ 
  & $1576$ & $2.73E-02$ & $0.86$ & $5.01E-02$ & $0.99$ & $7.70E-03$ & $1.16$ \\ 
  & $6545$ & $1.43E-02$ & $0.91$ & $2.43E-02$ & $1.01$ & $3.35E-03$ & $1.17$ \\ 
  & $26590$ & $7.29E-03$ & $0.96$ & $1.21E-02$ & $1.00$ & $1.61E-03$ & $1.04$ \\ 
\hline \hline 
$1$ & $22$ & $3.31E-02$ & $-$ & $1.05E-01$ & $-$ & $9.66E-03$ & $-$ \\ 
  & $85$ & $9.22E-03$ & $1.89$ & $2.99E-02$ & $1.86$ & $2.67E-03$ & $1.90$ \\ 
  & $370$ & $2.24E-03$ & $1.93$ & $7.13E-03$ & $1.95$ & $4.84E-04$ & $2.32$ \\ 
  & $1576$ & $5.42E-04$ & $1.96$ & $1.44E-03$ & $2.21$ & $4.70E-05$ & $3.22$ \\ 
  & $6545$ & $1.34E-04$ & $1.96$ & $3.32E-04$ & $2.06$ & $5.77E-06$ & $2.95$ \\ 
  & $26590$ & $3.34E-05$ & $1.98$ & $7.98E-05$ & $2.03$ & $6.29E-07$ & $3.16$ \\ 
\hline \hline 
$2$ & $22$ & $3.74E-03$ & $-$ & $1.00E-02$ & $-$ & $9.31E-04$ & $-$ \\ 
  & $85$ & $6.64E-04$ & $2.56$ & $2.46E-03$ & $2.07$ & $4.58E-04$ & $1.05$ \\ 
  & $370$ & $7.63E-05$ & $2.94$ & $3.05E-04$ & $2.84$ & $4.16E-05$ & $3.26$ \\ 
  & $1576$ & $8.44E-06$ & $3.04$ & $3.72E-05$ & $2.90$ & $3.20E-06$ & $3.54$ \\ 
  & $6545$ & $9.63E-07$ & $3.05$ & $3.48E-06$ & $3.33$ & $2.10E-07$ & $3.83$ \\ 
  & $26590$ & $1.17E-07$ & $3.01$ & $3.81E-07$ & $3.16$ & $1.57E-08$ & $3.70$ \\ 
\hline \hline 
$3$ & $22$ & $4.72E-04$ & $-$ & $1.91E-03$ & $-$ & $2.66E-04$ & $-$ \\ 
  & $85$ & $5.56E-05$ & $3.16$ & $2.91E-04$ & $2.79$ & $4.79E-05$ & $2.53$ \\ 
  & $370$ & $2.95E-06$ & $4.00$ & $2.03E-05$ & $3.62$ & $2.34E-06$ & $4.11$ \\ 
  & $1576$ & $1.12E-07$ & $4.51$ & $7.13E-07$ & $4.62$ & $5.42E-08$ & $5.20$ \\ 
  & $6545$ & $6.04E-09$ & $4.10$ & $3.05E-08$ & $4.43$ & $1.64E-09$ & $4.99$ \\ 
  & $26590$ & $3.56E-10$ & $4.04$ & $1.49E-09$ & $4.30$ & $4.38E-11$ & $5.17$ \\ 
\end{tabular}
\caption{History of convergence history of the error in $y$, $\bm q$ and $\widehat{y}$ for the example 2.}
\label{table:y,p,yhat_kidney}
\end{table}

\begin{table}[ht!]\renewcommand{\arraystretch}{1.1}\addtolength{\tabcolsep}{-4pt}
\centering
\begin{tabular}{c|c||c c||c c||c c}
$k$ & $N$ & $e_z$ & $\text{order}$ & $e_{\boldsymbol{p}}$ & $\text{order}$ & $e_{\widehat{\boldsymbol{z}}}$ & $\text{order}$  \\\hline\hline
$0$ & $22$ & $3.18E-01$ & $-$ & $8.49E-01$ & $-$ & $8.95E-02$ & $-$ \\ 
  & $85$ & $1.53E-01$ & $1.08$ & $4.27E-01$ & $1.01$ & $5.33E-02$ & $0.77$ \\ 
  & $370$ & $8.56E-02$ & $0.78$ & $2.06E-01$ & $0.99$ & $1.93E-02$ & $1.38$ \\ 
  & $1576$ & $4.50E-02$ & $0.86$ & $1.03E-01$ & $0.97$ & $8.45E-03$ & $1.14$ \\ 
  & $6545$ & $2.36E-02$ & $0.91$ & $5.11E-02$ & $0.98$ & $3.65E-03$ & $1.18$ \\ 
  & $26590$ & $1.20E-02$ & $0.96$ & $2.56E-02$ & $0.99$ & $1.74E-03$ & $1.06$ \\ 
\hline \hline 
$1$ & $22$ & $6.54E-02$ & $-$ & $1.72E-01$ & $-$ & $1.95E-02$ & $-$ \\ 
  & $85$ & $1.71E-02$ & $1.99$ & $4.37E-02$ & $2.03$ & $3.81E-03$ & $2.42$ \\ 
  & $370$ & $4.16E-03$ & $1.92$ & $9.45E-03$ & $2.08$ & $5.07E-04$ & $2.74$ \\ 
  & $1576$ & $1.01E-03$ & $1.95$ & $2.13E-03$ & $2.06$ & $3.97E-05$ & $3.52$ \\ 
  & $6545$ & $2.51E-04$ & $1.96$ & $5.11E-04$ & $2.00$ & $5.04E-06$ & $2.90$ \\ 
  & $26590$ & $6.26E-05$ & $1.98$ & $1.26E-04$ & $2.00$ & $5.75E-07$ & $3.01$ \\ 
\hline \hline 
$2$ & $22$ & $8.15E-03$ & $-$ & $2.49E-02$ & $-$ & $4.05E-03$ & $-$ \\ 
  & $85$ & $1.74E-03$ & $2.29$ & $7.17E-03$ & $1.83$ & $1.39E-03$ & $1.58$ \\ 
  & $370$ & $1.94E-04$ & $2.98$ & $9.31E-04$ & $2.78$ & $1.38E-04$ & $3.14$ \\ 
  & $1576$ & $1.98E-05$ & $3.15$ & $9.14E-05$ & $3.20$ & $1.07E-05$ & $3.54$ \\ 
  & $6545$ & $1.99E-06$ & $3.22$ & $7.81E-06$ & $3.45$ & $5.75E-07$ & $4.10$ \\ 
  & $26590$ & $2.33E-07$ & $3.06$ & $8.17E-07$ & $3.22$ & $3.77E-08$ & $3.89$ \\ 
\hline \hline 
$3$ & $22$ & $1.30E-03$ & $-$ & $4.42E-03$ & $-$ & $8.10E-04$ & $-$ \\ 
  & $85$ & $1.15E-04$ & $3.59$ & $5.08E-04$ & $3.20$ & $9.06E-05$ & $3.24$ \\ 
  & $370$ & $5.30E-06$ & $4.18$ & $2.78E-05$ & $3.95$ & $3.53E-06$ & $4.41$ \\ 
  & $1576$ & $2.21E-07$ & $4.38$ & $1.23E-06$ & $4.30$ & $6.00E-08$ & $5.62$ \\ 
  & $6545$ & $1.33E-08$ & $3.95$ & $6.71E-08$ & $4.09$ & $2.38E-09$ & $4.53$ \\ 
  & $26590$ & $8.15E-10$ & $3.98$ & $3.38E-09$ & $4.26$ & $9.70E-11$ & $4.57$ \\ 
\end{tabular}
\caption{History of convergence history of the error in $z$, $\bm p$ and $\widehat{z}$ for the example 2.}
\label{table:z,r,zhat_kidney}
\end{table}

\section{Conclusions}\label{ch:conclusions}

We have introduced and analyzed an unfitted high-order HDG method for a convection-diffusion control problem by applying the Transfer Path Method (TPM). The method was analyzed by considering the direction of $\boldsymbol{m}$ for the construction of the transferring paths, thus filling a gap in the existing literature. The method is well-posed and exhibits theoretically optimal convergence rates in the $L^2$-norm for all unknowns when the distance between $\partial \Omega$ and the computational boundary $\partial \Omega_h$ is proportional to $h^{\delta + 1}$, with $\delta > 0$. We highlight that the numerical experiments demonstrate optimal convergence even in the limiting case where $\delta = 0$.

\subsubsection*{Acknowledgements}
Manuel Solano was partially supported by ANID-Chile through Basal Project FB210005 and Fondecyt 1240183. MS was also supported by the Swedish Research Council under grant no. 2021-06594 while in residence at Institut Mittag-Leffler in Djursholm, Sweden during the fall semester of 2025.

\appendix

\section{Proofs of previous results}
\subsection{Proof of Lemma \ref{lem:Lambdas}}
\label{sec:proof-lambdas}

Let  $\bm{p} \in [H^1(K^e_\mathrm{ext})]^d$. Following the steps of \cite[Lemma 5.2]{paper2} (integration by parts), we have that 
\begin{equation*}
    l(\boldsymbol{x}) \Lambda^{\boldsymbol{p}}(\boldsymbol{x})
    = \int_0^{l(\boldsymbol{x})} \partial_t (\boldsymbol{p}(\boldsymbol{x} + t \boldsymbol{m}(\boldsymbol{x}))\cdot \boldsymbol{m}(\boldsymbol{x}))(l(\boldsymbol{x}) - t) \text{d}t.
\end{equation*}
Using Cauchy-Schwarz inequality and defining $\boldsymbol{w}(\boldsymbol{x},t) := \partial_t (\boldsymbol{p}(\boldsymbol{x} + t \boldsymbol{m}(\boldsymbol{x})) \cdot \boldsymbol{m}(\boldsymbol{x}))$, we deduce that
\begin{equation}
    \label{eq:proof-lambdas-aux1}
    l^2(\boldsymbol{x})(\Lambda^{\boldsymbol{p}}(\boldsymbol{x}))^2
    \leq \dfrac{l^3(\boldsymbol{x})}{3} \int_0^{l(\boldsymbol{x})} \boldsymbol{w}^2(\boldsymbol{x}, t) \text{d} t
    \leq \dfrac{l(\boldsymbol{x})(H_e^{\perp})^2}{3} \tnorm{\boldsymbol{w}}_e^2.
\end{equation}
where for the last inequality we used that $H_e^{\perp} = r_e h_e$. Since $\Gamma$ is Lipschitz and piecewise $\mathcal{C}^2$, by \cite[Lemma 3.4]{OyZuSo2019} we have
\begin{equation}
    \label{eq:proof-lambdas-aux2}
    \norm[0]{\Lambda^{\boldsymbol{p}}}_{e,l} 
    \leq r_e \beta^{-1/2}_e \dfrac{1}{\sqrt{3}} \norm[0]{\boldsymbol{w}}_{K_{ext}^e, h^2}.
\end{equation}
On the other hand, by the chain rule
\begin{equation*}
    \boldsymbol{w}(\boldsymbol{x}, t)
    = \partial_t (\boldsymbol{p}(\boldsymbol{\xi}) \cdot \boldsymbol{m}(\boldsymbol{x})) 
    = (\nabla\boldsymbol{p} \,\boldsymbol{m}(\boldsymbol{x})) \cdot \boldsymbol{m}(\boldsymbol{x}).
\end{equation*}
Then, considering that $|\boldsymbol{m}(\boldsymbol{x})| \leq 1$, and combining the above equality with \eqref{eq:proof-lambdas-aux2}, we deduce that \eqref{ineq:Lambdas-a} holds. On the other hand, from \eqref{eq:proof-lambdas-aux1}, considering now $\boldsymbol{p} \in [\mathbb{P}_k(K_e)]^d$ and using the definition of $C_{ext}^e$, we deduce
\begin{equation}
    \norm[0]{l^{1/2} \Lambda^{\boldsymbol{p}}}_e
    \leq \dfrac{r_e^{3/2} h_e}{3} C_{ext}^e \norm[0]{\nabla \boldsymbol{p}}_{K^e}
    \leq r_{e}^{3/2} C_{ext}^e C_{inv} \norm[0]{\boldsymbol{p}}_{K^e},
\end{equation}
where the last inequality was obtained after applying an inverse inequality (see \cite[Lemma 1.44]{di2011mathematical}) and $C_{inv} > 0$ is a constant independent of $h$.


\bibliographystyle{abbrvnat}
\bibliography{refs}


\end{document}